\newtheorem{defn}{Definition}[section]
\newtheorem{thm}{Theorem}[section]
\newtheorem{prop}{Proposition}[section]
\newtheorem{lem}{Lemma}[section]
\newtheorem{cor}{Corollary}[section]
\newtheorem{rem}{Remark}[section]
\numberwithin{equation}{section}
\DeclareMathOperator*{\argmin}{argmin}
\newcommand{\me}{\mathcal{E}}
\newcommand{\mP}{{\mathcal{P}}}
\newcommand{\mf}{\mathcal{F}}
\newcommand{\JKOstep}{\mathcal{A}}
\newcommand{\mptrd}{{\mathcal{P}_2(\Rd)}}
\newcommand{\mpdtard}{{\mathcal{P}_{2}^a(\Rd)}}
\newcommand{\mw}{\mathcal{W}}
\newcommand{\rhot}{\rho_{\tau}}
\newcommand{\etat}{\eta_{\tau}}
\newcommand{\rhotk}{\rho_{\tau}^{k}}
\newcommand{\etatk}{\eta_{\tau}^{k}}
\newcommand{\rhotkk}{\rho_{\tau}^{k+1}}
\newcommand{\etatkk}{\eta_{\tau}^{k+1}}
\newcommand{\sigmat}{\sigma_\tau}
\newcommand{\sigmatk}{\sigma_\tau^k}
\newcommand{\sigmatkk}{\sigma_\tau^{k+1}}
\newcommand{\R}{\mathbb{R}}
\newcommand{\Rd}{{\mathbb{R}^{d}}}
\newcommand{\Rn}{{\mathbb{R}^{n}}}
\newcommand{\dx}{\mathrm{d}x}
\newcommand{\dt}{\mathrm{d}t}
\newcommand{\ddt}{\frac{\mathrm{d}}{\mathrm{d}t}}
\newcommand{\dW}{d_W}
\newcommand{\supp}{\mathrm{supp}}
\newcommand{\dive}{\mbox{div}}
\def\XXint#1#2#3{{\setbox0=\hbox{$#1{#2#3}{\int}$}
  \vcenter{\hbox{$#2#3$}}\kern-.5\wd0}}
\definecolor{cadmiumgreen}{rgb}{0.0, 0.42, 0.24}
\title[Competing effects in fourth-order aggregation-diffusion PDEs]{Competing effects in fourth-order aggregation-diffusion equations}
\author[J.A. Carrillo, A. Esposito, C. Falcó, A. Fernández-Jiménez]{José Antonio Carrillo \and Antonio Esposito \and Carles Falcó \and Alejandro Fernández-Jiménez}
\address{Mathematical Institute, University of Oxford, Woodstock Road, Oxford, OX2 6GG, United Kingdom.}
\email{carrillo@maths.ox.ac.uk}
\email{antonio.esposito@maths.ox.ac.uk}
\email{falcoigandia@maths.ox.ac.uk}
\email{alejandro.fernandezjimenez@maths.ox.ac.uk}
\keywords{Cahn-Hilliard, aggregation-diffusion, variational problems, Wasserstein gradient flows}
\subjclass[2020]{35A01, 35A15, 35A21, 35D30, 35G20}
\date{}
\begin{document}

\begin{abstract}
We give sharp conditions for global in time existence of gradient flow solutions to a Cahn-Hilliard-type equation, with backwards second order degenerate diffusion, in any dimension and for general initial data. Our equation is the 2-Wasserstein gradient flow of a free energy with two competing effects: the Dirichlet energy and the power-law internal energy. Homogeneity of the functionals reveals critical regimes that we analyse. Sharp conditions for global in time solutions, constructed via the minimising movement scheme, also known as JKO scheme, are obtained. Furthermore, we study a system of two Cahn-Hilliard-type equations exhibiting an analogous gradient flow structure.
\end{abstract}

\maketitle


\section{Introduction}

In this manuscript we are interested in the mathematical analysis of the equation
\begin{equation}\label{eq:thin_film_intro}
    \partial_t \rho=-\, \dive (\rho \nabla(\Delta \rho))-\chi\Delta \rho^{m},
\end{equation}
where $m\geq 1$, and its extension to systems. We look for solutions of \eqref{eq:thin_film_intro} in the set of probabity densities, $\rho\in L^1_+(\mathbb{R}^d):=\{\rho\in L^1(\Rd): \rho\ge0\}$, thus setting the mass to one in the sequel without loss of generality. The parameter $\chi>0$ measures the relative balance between aggregation, modelled by backwards degenerate diffusion, and repulsion, modelled by fourth-order diffusion. 
The case of general masses can be reduced to~\eqref{eq:thin_film_intro} with a suitable parameter $\chi$ upon a standard time rescaling and mass normalisation, cf.~Remark~\ref{rem: critical mass}.

Equation~\eqref{eq:thin_film_intro} is related to the classical thin-film equations from lubrication theory, cf.~\cite{HOCHERMAN_ROSENAU_93,Bertozzi_Pugh_Nonlinearity_94,Otto_CPDE98,Ber98,DalPasso_Giacomelli_Shishkov_CPDE01,Grun_CPDE04} and the references therein. Starting from a conjecture of Hocherman and Rosenau,~\cite{HOCHERMAN_ROSENAU_93},
the authors in~\cite{Bertozzi_Pugh_CPAM98} study well-posedness and finite-time singularities of Cahn-Hiliard-type equations, in one spatial dimension on bounded interval with periodic boundary conditions. More precisely, they analyse the family of equations of the form
\begin{equation}\label{eq:ch_mobilities}
\partial_t\rho=-(\rho^n\rho_{xxx})_x-(\rho^{m-1}\rho_x)_x,
\end{equation}
proving that for nonnegative (weak) solutions, blow-up can only occur for $m\ge n+3$. The results in \cite{HOCHERMAN_ROSENAU_93,Bertozzi_Pugh_CPAM98} hold for general degenerate mobilities, as in~\cite[Conjectures 1,2]{Bertozzi_Pugh_CPAM98}. Afterwards, several contributions to the analysis of the one dimensional problem have been made. Linear ins/stability of steady states for the one-dimensional periodic problem was analysed in \cite{Laugesen_Pugh_Arma00,Slepcev_IntFree_09}. Using the dissipation of a suitable energy functional, the authors of~\cite{Laugesen_Pugh_JDE22} were able to further characterise the energy landscape distinguishing between local minima and saddles among periodic steady states. Stability of droplets steady states with a fixed contact angle for the one-dimensional periodic problem was further studied in \cite{Laugesen_Pugh_EJAM2000}.

The critical case $m=n+3$ in one dimension is analysed in \cite{Witel_Bern_Bert_EJAM04}, where blow-up in finite time can only happen above a certain critical mass identified thanks to a sharp Sz.-Nagy inequality, cf.~\cite{Nagy_1941, Inequalities_book}. Existence of selfsimilar blow-up solutions of \eqref{eq:ch_mobilities} is explored in \cite{Slepcev_Pugh_05} for the critical case $m=n+3$. In particular, for $n=1$, there exists a family of blowing-up symmetric selfsimilar solutions with zero contact angle. Further analysis of one-dimensional self-similar solutions, both expanding and blowing-up, for the critical cases of \eqref{eq:ch_mobilities} has been done in \cite{EGK07a,EGK07b,Slepcev_IntFree_09}.

The nonlinear Cahn-Hilliard-type equations \eqref{eq:thin_film_intro} have also been recently proposed as approximations of nonlocal aggregation-diffusion models of the form
\begin{equation}\label{eq:KS}
    \frac{\partial \rho}{\partial t} = \Delta \rho^s + \dive (\rho \nabla (W\ast\rho)), \quad s \geq 1,
\end{equation}
by truncation of the Fourier expansion of the interaction potential $W$, see~\cite{BernoffTopazCH}. This approximation has been rendered rigorous under certain assumptions on the interaction potential $W$ in~\cite{elbar2022degenerate}. 

The connection between aggregation-diffusion and Cahn-Hilliard equations has also been generalised to systems of aggregation-diffusion equations modelling tissue growth and patterning due to cell-cell adhesion~\cite{carrillo2019population}. The authors in~\cite{falco2022local} show that cell-sorting phenomena are kept for the resulting system of equations:
\begin{subequations}\label{eq:two species}    
\begin{align}
        \partial_t\rho &= -\dive\left(\rho\nabla\left(\kappa\Delta\rho + \alpha\Delta\eta + \beta\rho+\omega\eta\right)\right), \label{eq: two species rho}
        \\
        \partial_t\eta & =-\dive\left(\eta\nabla\left(\alpha\Delta\rho + \Delta\eta + \omega\rho+\eta\right)\right). \label{eq: two species eta}
\end{align}
\end{subequations}
The parameters in the model are such that $\beta,\omega\in\mathbb{R}$ and the matrix 
\begin{equation*}
   A = \begin{pmatrix}
    \kappa & \alpha
    \\
    \alpha & 1
    \end{pmatrix},
\end{equation*}
is positive definite. We extend the theory developed for the one-species case \eqref{eq:thin_film_intro} to construct solutions to the systems of equations \eqref{eq:two species}.
The nonlocal-to-local limit in the context of systems has also been studied rigorously in \cite{carrillo2023degenerate}. We also mention that different multi-species Cahn-Hilliard equations are considered in~\cite{elliott1991generalised,elliott1997diffusional,Ehrla_DiMa_Pietschamm21} and references therein.

Equation \eqref{eq:thin_film_intro} can be interpreted as $2$-Wasserstein gradient flow of the (extended) energy functional 
\begin{equation}\label{eq:functional} 
\mf_m[\rho] = \begin{cases}\frac{1}{2}\displaystyle\int_\Rd|\nabla\rho(x)|^2\,\dx-\chi\me_m[\rho], &\rho\in\mathcal{P}^a(\mathbb{R}^d),\, \nabla\rho \in L^2(\mathbb{R}^d),\\
+\infty, & \mbox{otherwise},
\end{cases}
\end{equation}
as already noted in \cite{Slepcev_IntFree_09}, being
\begin{equation}\label{eq:entropy}
    \me_m[\rho]=
    \begin{cases}
        \displaystyle\int_\Rd\rho(x)\log\rho(x)\,\dx, \quad &m=1,\\
        \frac{1}{m-1}\displaystyle\int_\Rd\rho^m\dx, &m>1.
    \end{cases}
\end{equation}
This gradient flow structure was made rigorous for related Cahn-Hilliard equations in \cite{MMS09, Lisini_Matthes_Savare12}. However, the former does not include the second-order backwards diffusion term in \eqref{eq:thin_film_intro}, while the latter is concerned with more general, density-dependent, mobilities. 

As for the multi-species case, by defining the free energy functional as
\begin{equation*}
    \Tilde{\bm{\mf}}[\rho,\eta] = \int_{\Rd}\left(\frac{\kappa}{2}|\nabla\rho|^2+\frac{1}{2}|\nabla\eta|^2 + \alpha\nabla\rho\cdot\nabla\eta-\frac{\beta}{2}\rho^2-\frac{1}{2}\eta^2-\omega\rho\eta\right)\,\dx,
\end{equation*}
system \eqref{eq:two species} can be written as a 2-Wasserstein gradient flow with respect to the (extended) free energy functional
\begin{equation}\label{eq:functiona_two_species}
    \bm{\mf}[\rho,\eta] = \begin{cases}
      \Tilde{\bm{\mf}}[\rho,\eta] & \mbox{if } (\rho,\eta)\in\mathcal{P}^a(\mathbb{R}^d)^2,\, (\nabla\rho,\nabla\eta) \in L^2(\mathbb{R}^d)^2
        \\
        +\infty & \mbox{otherwise.} 
    \end{cases}
\end{equation}

Our main goal is to show global existence of weak solutions of \eqref{eq:thin_film_intro} for $m < m_c:=2 + \frac{2}{d}$ (subcritical case) and for $m = m_c$ (critical case) for \emph{subcritical 
 mass}, $0<\chi<\chi_c$, by leveraging the aforementioned gradient flow structure. The critical parameter $\chi_c$ is identified by the sharp constant of a suitable functional inequality \cite{liu2017generalized}. The critical exponent $m_c$ is determined by scaling arguments using mass-preserving dilations of densities in the energy functional \eqref{eq:functional}. Moreover, we also obtain global existence of weak solutions for the system \eqref{eq:two species} by an analogous approach. In fact, we employ the (by now) classical variational minimising movement scheme, or JKO scheme,~\cite{JKO98,AGS} to obtain an approximation of a candidate solution. A crucial step will be to use the flow interchange technique, developed in \cite{MMS09, Lisini_Matthes_Savare12}, to gain suitable regularity. Afterwards, we check that limits of the variatonal scheme are indeed weak solutions in any dimension. 

Our main result provides sharp conditions on the exponent of backwards diffusion in \eqref{eq:thin_film_intro} to ensure global existence of solutions in the natural class of initial data for any dimension compared to previous literature \cite{Witel_Bern_Bert_EJAM04,Slepcev_IntFree_09,Lisini_Matthes_Savare12,LW17}.

The key ingredient to take advantage of the gradient flow structure of \eqref{eq:thin_film_intro} and system \eqref{eq:two species} is to have uniform bounds on the competing terms in the free energies \eqref{eq:functional} and \eqref{eq:functiona_two_species}, respectively. Interestingly, this is reminiscent of similar arguments developed for generalisations of the Patlak-Keller-Segel equation for chemotactic cell movement \cite{blanchet2009critical,Carrillo_Craig_Yao19,CarrilloKe21}. Actually, we can draw a nice parallelism with this well studied problem. Generalised Patlak-Keller-Segel equations are of the form \eqref{eq:KS}. In particular, let us focus on the power-law kernel
\begin{equation*}
    W_k (x) = 
    \begin{dcases}
		\frac{|x|^k}{k} & \text{if } k \neq 0, \\
		\log |x| &  \text{if } k = 0.
	\end{dcases}
\end{equation*}
We find an immediate connection with the problem \eqref{eq:thin_film_intro}. Analogously to the case we are studying in this work, there exists a critical exponent, $s_c = 1 - \frac{k}{d}$, also found via mass-preserving dilations on the corresponding free energy functional which characterises the behaviour of \eqref{eq:KS}. 

The case $s > s_c$ is the diffusion dominated regime and global well-posedness for~\eqref{eq:KS} is expected, see for instance \cite{Calvez_Carrillo06,Sugiyama07,blanchet2009critical,Calvez_Carrillo_Hoffmann17a, Calvez_Carrillo_Hoffmann17b,CHMV18}. This is analogous to the case $1 \leq m < m_c$ for \eqref{eq:thin_film_intro}.

As for the range $1 \leq s < s_c$, aggregation-dominated regime for Eq.~\eqref{eq:KS} --- analogous to the case $m > m_c$ for \eqref{eq:thin_film_intro} --- coexistence of blow-up and global existence depending on the initial data is expected, see 
\cite{LS07,Bedrossian_Rodriguez_Bertozzi11,CW14} for instance.

In the fair competition regime $s = s_c$ --- analogous to our critical exponent $m=m_c$ --- there exists a dichotomy between aggregation and diffusion in terms of the \emph{initial mass}: $M$, analogous to our parameter $\chi$. Sharp constants of variants of Hardy-Littlewood-Sobolev type inequalities determine the critical value of the mass $M_c$ for \eqref{eq:KS}, analogously to our critical parameter $\chi_c$. We note that for our fourth-order Cahn-Hilliard type equation, the crucial functional inequality was established in \cite{liu2017generalized}. In the supercritical mass case, $M > M_c$, there exists solutions that blow-up in finite time, see for instance \cite{blanchet2009critical,Bedrossian11, Bedrossian_Rodriguez_Bertozzi11,Calvez_Carrillo_Hoffmann17a, Calvez_Carrillo_Hoffmann17b}. In the subcritical mass case, $M < M_c$, global existence of solutions is shown and spreading self-similar solutions are expected to attract the long time dynamics, see for instance \cite{DP04,BDP06,Bedrossian_Rodriguez_Bertozzi11,Bedrossian11,Calvez_Carrillo_Hoffmann17a, Calvez_Carrillo_Hoffmann17b}.

In the critical case $M = M_c$, there are infinitely many stationary states given by the optimisers of the variants of the HLS inequalities, solutions are globally well-posed blowing-up at infinite time for bounded second moment initial data, and local stability of stationary solutions is expected, see \cite{DP04,blanchet2009critical,BCC12,Yao14,Calvez_Carrillo_Hoffmann17a, Calvez_Carrillo_Hoffmann17b}.

We shall perform a parallel study to nonlinear Keller-Segel equations~\eqref{eq:KS} for our family of Cahn-Hilliard equations~\eqref{eq:thin_film_intro}, depending on the critical exponent case $m_c$ and parameter $\chi$. 

Finally, we want to emphasize that our work sets the path to many other interesting open questions. Uniqueness is widely open being the functionals not convex, even in subsets, in any obvious manner. Existence of minimisers in the subcritical case in the whole space is not clear since we do not know at present how to bound uniformly in time the second moment or any other quantity controlling escape of mass at infinity. Long time asymptotics are, in turn, widely open in all global existence cases. Free boundary problem techniques could help understand if the evolution leads to compactly supported solutions corresponding to compactly supported initial data. This conjecture is corroborated by numerical experiments being this another challenging problem. In the two-species case, we can identify other interesting issues such as sharp segregation for specific parameter values between the two species not only at steady states but along their evolutions. This information is important for the applications in mathematical biology \cite{carrillo2019population,falco2022local}. 

We structure the paper as follows. Section 2 is devoted to the precise statements of the main theorems together with some preliminary material used in the sequel. We will analyse the existence of global minimisers of the energy~\eqref{eq:functional} following the strategies in \cite{DP04,blanchet2009critical,Calvez_Carrillo_Hoffmann17a} in Section 3. In Section 4 we deal with the core main result of global existence of weak solutions to the single equation~\eqref{eq:thin_film_intro} in any dimension for generic initial data. Finally, Section 5 focuses on the generalisation of this approach to the case of systems of the form~\eqref{eq:two species}.


\section{Main results and preliminaries}\label{sec:preliminaries}

We begin by listing the main results covered in this manuscript. First we study some properties of the free energy functional $\mf_m$ and its minimisers. The following theorem summarises the results proven in Section~\ref{sec:Energy minimisers}.
\begin{thm}\label{thm:energy}
   Let $\mf_m$ be as in~\eqref{eq:functional}. The following holds:
   \begin{enumerate}[label=$(\arabic*)$]
       \item\label{thm:subcritical_exp} If $1 \leq m < 2 + 2/d$, then $\mf_m$ is bounded from below.
       \item\label{thm:critical_exp} If $m = 2 +2/d$, then, for the subcritical and critical mass regimes, $\mf_m$ is bounded from below. Furthermore, for the critical mass, the infimum is achieved. In the supercritical mass regime, $\mf_m$ is unbounded from below. 
       \item\label{thm:supercritical_exp} If $m > 2+2/d$, then $\mf_m$ is unbounded from below.
   \end{enumerate}
\end{thm}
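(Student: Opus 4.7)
Proof plan: The three parts reflect the scaling of $\mf_m$ under the mass-preserving dilation $\rho_\lambda(x) := \lambda^d \rho(\lambda x)$, which sends $\|\nabla\rho\|_{L^2}^2$ to $\lambda^{d+2}\|\nabla\rho\|_{L^2}^2$ and $\int\rho^m\,\dx$ to $\lambda^{d(m-1)}\int\rho^m\,\dx$ (while the entropy transforms by an additive shift $d\log\lambda$ when $m=1$). The two competing terms have matching homogeneity exactly at $m_c = 2 + 2/d$, pinning the critical exponent and dictating the trichotomy.

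For part (1) with $m \in (1, m_c)$, I would invoke the Gagliardo--Nirenberg--Sobolev (GNS) inequality
\begin{equation*}
    \|\rho\|_{L^m}^m \leq C \|\nabla\rho\|_{L^2}^{m\theta}\|\rho\|_{L^1}^{m(1-\theta)},\qquad m\theta = \tfrac{2d(m-1)}{d+2} < 2,
\end{equation*}
and absorb the resulting subquadratic power of $\|\nabla\rho\|_{L^2}$ into $\tfrac{1}{2}\|\nabla\rho\|_{L^2}^2$ via Young's inequality, yielding $\mf_m[\rho] \geq -C(\chi,d,m)$. The endpoint $m = 1$ reduces to the same scheme via the pointwise bound $x\log x \leq (x^p - x)/(p-1)$, valid for all $x\geq 0$ and $p > 1$ (a one-line consequence of the convexity of $p \mapsto x^p$): integrating gives $\me_1[\rho] \leq (\|\rho\|_{L^p}^p - \|\rho\|_{L^1})/(p-1)$ for any $p \in (1, m_c)$, and the GNS-plus-Young argument above then controls $\|\rho\|_{L^p}^p$.

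For part (2), at $m = m_c$ the GNS exponent $m\theta$ equals exactly $2$, so
\begin{equation*}
    \|\rho\|_{L^{m_c}}^{m_c} \leq C_\ast \|\nabla\rho\|_{L^2}^2 \|\rho\|_{L^1}^{m_c - 2},
\end{equation*}
with the sharp constant $C_\ast$ supplied by \cite{liu2017generalized}. Plugging into $\mf_{m_c}$ yields
\begin{equation*}
    \mf_{m_c}[\rho] \geq \Bigl(\tfrac{1}{2} - \tfrac{\chi C_\ast M^{m_c - 2}}{m_c - 1}\Bigr)\|\nabla\rho\|_{L^2}^2,\qquad M = \|\rho\|_{L^1},
\end{equation*}
which identifies $\chi_c$ as the value zeroing the bracket at normalised mass. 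Subcritical and critical mass regimes thus give $\mf_{m_c} \geq 0$; at $\chi = \chi_c$ an optimiser of the sharp GNS inequality (whose existence is supplied by \cite{liu2017generalized}) attains the value zero, and the scale invariance of $\mf_{m_c}$ at $m_c$ then generates a one-parameter family of minimisers. In the supercritical regime, inserting a GNS optimiser $\rho_\ast$ (for which the bracket is negative) into the dilation yields $\mf_{m_c}[\rho_{\ast,\lambda}] = \lambda^{d+2}\,\mf_{m_c}[\rho_\ast] \to -\infty$.

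Part (3) is immediate from the scaling identity: for any $\rho$ with positive Dirichlet energy and positive $L^m$ norm,
\begin{equation*}
\mf_m[\rho_\lambda] = \tfrac{\lambda^{d+2}}{2}\|\nabla\rho\|_{L^2}^2 - \tfrac{\chi\lambda^{d(m-1)}}{m-1}\|\rho\|_{L^m}^m,
\end{equation*}
and since $d(m-1) > d + 2$, the negative term dominates as $\lambda \to \infty$. The main obstacle lies in part (2): attainment of the infimum at $\chi = \chi_c$ rests squarely on the sharp GNS inequality and the structure of its optimisers from \cite{liu2017generalized}, and verifying that supercritical $\chi$ genuinely produces $-\infty$ (rather than merely a finite negative infimum) requires feeding those optimisers back into the dilation ansatz, together with the delicate matching of exponents specific to $m = m_c$.
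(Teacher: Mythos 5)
Your proposal is correct and follows essentially the same route as the paper: Gagliardo--Nirenberg plus Young's inequality for $1<m<m_c$, the sharp constant of \cite{liu2017generalized} to identify $\chi_c$ and handle $m=m_c$, and the mass-preserving dilation to produce unboundedness in the supercritical regimes. The only cosmetic differences are that the paper treats $m=1$ via the special case $x\log x\le x^2$ (i.e. $\mf_1\ge\mf_2$) rather than your general tangent-line bound, and in the supercritical-mass case it works with near-optimisers of the Gagliardo--Nirenberg quotient normalised to unit mass, whereas you feed in an exact optimiser --- both work since the quotient is invariant under dilation and amplitude rescaling.
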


Case \ref{thm:subcritical_exp} is proven in \Cref{prop:Some properties}. Case \ref{thm:critical_exp} is a combination of two results. In \Cref{prop:prop_energies_critical} we show that for $\chi \leq \chi_c$ the free energy is bounded from below. In \Cref{prop: infimum free energy}, we prove that the infimum is achieved for critical mass and that the free energy is unbounded if $\chi > \chi_c$. Finally, in \Cref{prop: free energy supercritical} we show case \ref{thm:supercritical_exp}. 

Throughout the manuscript, we denote by $\mP(\Rd)$ the set of probability measures on $\Rd$, for $d\in\mathbb{N}$, and by $\mP_2(\Rd):=\{\rho\in\mP(\Rd):\mathrm{m}_2(\rho)<+\infty\}$, being
$\mathrm{m}_2(\rho):=\int_\Rd|x|^2\,\mathrm{d}\rho(x)$ the $2^{\mathrm{nd}}$-order moment of $\rho$. We shall use $\mP^a(\Rd)$ and $\mP_2^a(\Rd)$ for elements in $\mP(\Rd)$ and $\mP_2(\Rd)$ which are absolutely continuous with respect to the Lebesgue measure.

The second result we prove is the existence of weak solutions to~\eqref{eq:thin_film_intro}, in the following sense:

\begin{defn}[Weak solution]\label{def:weak solution}
    A weak solution to~\eqref{eq:thin_film_intro} on the time interval $[0,T]$, with initial datum $\rho_0\in\mpdtard$ such that $\nabla\rho_0\in L^2(\Rd)$, is a narrowly continuous curve $\rho:[0,T]\to\mptrd$ satisfying the following properties:
    \begin{enumerate}[label=$\roman*$)]
        \item $\rho\in L^\infty([0,T];L^p(\Rd))\cap L^\infty([0,T];H^1(\Rd))\cap L^2([0,T];H^2(\Rd))$, for any $p\in[1,2^*]$, where $2^* =+\infty$ if $d = 1,2$ and $2^* = 2d/(d-2)$ if $d\ge 3$;
        \item for every $\varphi \in  C_c^2(\Rd)$ and every $0 \leq s_1 < s_2 \leq T$ it holds
    \begin{align*}
    \int_{\Rd}  \varphi (x) \rho (s_2, x) \, \dx & = \int_{\Rd} \varphi (x) \rho (s_1, x) \, \dx \\
    & \quad - \int_{s_1}^{s_2}  \int_\Rd\left( \rho \Delta \rho \Delta \varphi + \Delta \rho \nabla \rho \cdot \nabla \varphi \right)\,\dx \, \dt \\
    & \quad -\chi \int_{s_1}^{s_2} \int_\Rd  \rho^m \Delta \varphi \,\dx \, \dt.
\end{align*}
    \end{enumerate}
\end{defn}

\begin{thm}\label{thm:main_result_existence}
    Assume $1\le m<2+2/d$ or $m=2+2/d$ with subcritical mass $\chi<\chi_c$ and let $\rho_0\in\mathcal{P}_2^a(\Rd)$ be an initial datum such that $\mf_m[\rho_0]<+\infty$. Then there exists a weak solution to~\eqref{eq:thin_film_intro}.
\end{thm}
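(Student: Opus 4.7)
I construct a weak solution via the minimising movement (JKO) scheme. For fixed $\tau>0$, set $\rho_\tau^0:=\rho_0$ and define inductively
\begin{equation*}
\rho_\tau^{k+1} \in \argmin_{\rho\in\mpdtard} \left\{ \frac{1}{2\tau} W_2^2(\rho,\rho_\tau^k) + \mf_m[\rho] \right\}.
\end{equation*}
Each minimisation admits a solution by the direct method: \Cref{thm:energy} supplies a lower bound on $\mf_m$ under our hypotheses, and the ``bad'' negative contribution $-\chi\me_m$ is absorbed into a fraction strictly less than one of the Dirichlet energy through the sharp Gagliardo--Nirenberg inequality of \cite{liu2017generalized}, where the criticality constraint is exactly $\chi<\chi_c$ in the critical-exponent case. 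The Wasserstein penalty bounds the second moment; together these yield $H^1$ pre-compactness of minimising sequences, and joint lower semicontinuity (narrow for $\mf_m$ and $W_2^2$) closes the argument.

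\textbf{A priori estimates.} Comparing the minimum to the competitor $\rho_\tau^k$ and telescoping produces the standard bounds
\begin{equation*}
\mf_m[\rho_\tau^{k+1}] \leq \mf_m[\rho_0], \qquad \sum_{k} W_2^2(\rho_\tau^{k+1},\rho_\tau^k) \leq 2\tau\bigl(\mf_m[\rho_0]-\inf\mf_m\bigr).
\end{equation*}
Denoting $\rho_\tau(t)$ the piecewise-constant interpolation, the second bound yields a refined Hölder-$1/2$ equicontinuity of $\rho_\tau$ in $W_2$, while the first, combined once more with the Gagliardo--Nirenberg inequality, gives uniform-in-$\tau$ estimates on $\|\rho_\tau\|_{L^\infty_t H^1}$ and, via Sobolev embedding, on $\|\rho_\tau\|_{L^\infty_t L^p}$ for every $p\in[1,2^*]$.

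\textbf{Flow interchange for $H^2$ regularity.} The crucial additional step is the $L^2_tH^2_x$ bound required by \Cref{def:weak solution}. Following the technique of \cite{MMS09,Lisini_Matthes_Savare12}, I evolve $\rho_\tau^{k+1}$ along the heat semigroup $(S_s)_{s\geq 0}$, the $W_2$-gradient flow of the Boltzmann entropy $\mh[\rho]=\int\rho\log\rho\,\dx$. The JKO minimality, applied to the competitor $S_s\rho_\tau^{k+1}$, yields
\begin{equation*}
\mh[\rho_\tau^k]-\mh[\rho_\tau^{k+1}] \geq -\tau\,\frac{d}{ds}\bigg|_{s=0^+}\mf_m[S_s\rho_\tau^{k+1}] = \tau\int_\Rd|\Delta \rho_\tau^{k+1}|^2\,\dx - \tau\chi m\int_\Rd(\rho_\tau^{k+1})^{m-2}|\nabla\rho_\tau^{k+1}|^2\,\dx.
\end{equation*}
The last (negative) term is controlled via Hölder's inequality and the already-obtained $L^p\cap H^1$ bounds, forfeiting only a constant strictly less than one on the good term $\|\Delta\rho\|_{L^2}^2$ exactly under our subcritical hypotheses. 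Summing in $k$ and using interpolation to bound $\mh$ along the scheme delivers the $L^2([0,T];H^2(\Rd))$ estimate.

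\textbf{Compactness and passage to the limit.} An Aubin--Lions type argument combining the $L^\infty_t H^1$ bound, the $L^2_t H^2$ bound and the $W_2$-equicontinuity extracts a subsequence $\rho_\tau\to\rho$ strongly in $L^2([0,T];H^1_{\mathrm{loc}}(\Rd))$, with narrowly continuous limit $\rho$. The discrete Euler--Lagrange equation of the JKO step, obtained by perturbing $\rho_\tau^{k+1}$ along flows of compactly supported smooth vector fields, produces the discrete analogue of the identity in \Cref{def:weak solution}; the strong convergences just established, together with the uniform integrability of $\rho_\tau^m$ supplied by the $L^p$ bound, allow passage to the limit in the nonlinear terms $\rho\Delta\rho\Delta\varphi$, $\Delta\rho\nabla\rho\cdot\nabla\varphi$ and $\rho^m\Delta\varphi$. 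The main obstacle throughout is the flow-interchange step: the backwards diffusion contributes with the wrong sign both to $\mf_m$ and to its dissipation along $(S_s)$, and in both places the subcritical condition ($1\le m<m_c$, or $m=m_c$ with $\chi<\chi_c$) is exactly what allows the sharp Gagliardo--Nirenberg inequality to absorb it into the fourth-order term.
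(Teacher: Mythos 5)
Your overall architecture coincides with the paper's: the JKO scheme with each step solved by the direct method (cf.~\Cref{prop:Existence minimiser}), telescoping energy and $\mathcal{W}_2$ estimates giving H\"older-$1/2$ equicontinuity and uniform $L^\infty_t(H^1\cap L^p)$ bounds, flow interchange along the heat semigroup for the $L^2_tH^2_x$ estimate, Rossi--Savar\'e/Aubin--Lions compactness, and the discrete Euler--Lagrange equation obtained by perturbing along $P^\varepsilon=\mathrm{id}+\varepsilon\nabla\varphi$. So the route is the paper's route.

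The one step that does not go through as written is the control of the cross term in the flow interchange. Having integrated by parts, you arrive at $\chi m\int_\Rd\rho^{m-2}|\nabla\rho|^2\,\dx$ and claim it is ``controlled via H\"older's inequality and the already-obtained $L^p\cap H^1$ bounds, forfeiting only a constant strictly less than one on the good term $\|\Delta\rho\|_{L^2}^2$''. In this form the term contains no Laplacian, so there is nothing to absorb into $\|\Delta\rho\|_{L^2}^2$, and H\"older against the available bounds ($\nabla\rho\in L^2$, $\rho\in L^p$ for $p\le 2^*$) does not close, since $\rho^{m-2}$ is not in $L^\infty$. The paper's fix is precisely \emph{not} to integrate by parts: keep the dissipation as $-\tfrac{\chi m}{m-1}\int\rho^{m-1}\Delta\rho\,\dx$, apply Young's inequality to get $\tfrac12\int|\Delta\rho|^2+C\int\rho^{2(m-1)}$, and note that $2(m-1)<2^*$ exactly when $m<2+2/d$ (with Jensen's inequality replacing the $L^p$ bound when $1<m<3/2$, where $2(m-1)<1$). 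For $m=1$ your expression becomes the Fisher information $\chi\int|\nabla\rho|^2/\rho$, which is not controlled by any $L^p\cap H^1$ bound; the paper treats this case separately by recognising it as the entropy dissipation $-\tfrac{\mathrm{d}}{\mathrm{d}s}\me[S^s_{\me}\rho]$ and bounding $\limsup_{s\downarrow0}\bigl(\me[\rho]-\me[S^s_{\me}\rho]\bigr)$ directly. Two smaller imprecisions: the subcritical hypothesis is not what makes the absorption in the flow interchange work (Young's inequality does that for any $m>1$); it is what guarantees the a priori $H^1$ and $L^p$ bounds in the first place, via \Cref{prop:Some properties} and \Cref{prop:prop_energies_critical}. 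And since $\rho_\tau^{k+1}$ is only $H^1$, the derivative at $s=0^+$ need not exist; the paper works with $\limsup$/$\liminf$ and weak lower semicontinuity of $\|D^2 S^s_{\me}\rho\|_{L^2}$ to pass to $s=0$.
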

We extend our results from the one species case to construct weak solutions to system~\eqref{eq:two species}, in the following sense. 
\begin{defn}[Weak solution for the system]\label{def:Weak solution two species}
    A weak solution to \eqref{eq:two species} on the time interval $[0,T]$, with initial datum $\sigma_0 = (\rho_0, \eta_0) \in \mP_2^a(\Rd)^2$ such that $\nabla \rho_0, \nabla \eta_0 \in L^2(\Rd)$, consists of a pair of narrowly continuous curves $\rho, \eta: [0,T] \rightarrow \mP_2 (\Rd)$ satisfying the following properties:
    \begin{enumerate}[label=$\roman*$)]
        \item $\rho, \eta \in L^\infty([0,T];L^p(\Rd))\cap L^\infty([0,T];H^1(\Rd))\cap L^2([0,T];H^2(\Rd))$, for any $p\in[1,2^*]$, where $2^* =+\infty$ if $d = 1,2$ and $2^* = 2d/(d-2)$ if $d\ge 3$;
        \item for every $\varphi, \psi \in C^2_c(\Rd)$ and every $0 \leq s_1 < s_2 \leq T$ it holds
        \begin{align*}
        \int_{\Rd} \varphi (x) \rho(s_2 , x) \, \dx & = \int_{\Rd} \varphi (x) \rho(s_1, x) \, \dx \\
        & \quad - \kappa \int_{s_1}^{s_2} \int_{\Rd} \rho \Delta \rho \Delta \varphi + \nabla \rho\cdot\nabla \varphi \Delta \rho\, \dx \, \dt \\
        & \quad - \alpha \int_{s_1}^{s_2} \int_{\Rd} \rho \Delta \eta \Delta \varphi + \nabla \rho \cdot \nabla \varphi \Delta \eta\, \dx \, \dt \\
        & \quad - \frac{\beta}{2} \int_{s_1}^{s_2} \int_{\Rd} \rho^2 \Delta \varphi \, \dx \, \dt + \omega \int_{s_1}^{s_2} \int_{\Rd} \rho \nabla \eta\cdot \nabla \varphi \, \dx \, \dt ,        \end{align*}
        \begin{align*}
        \int_{\Rd} \psi (x) \eta(s_2 , x) \, \dx & = \int_{\Rd} \psi (x) \eta(s_1, x) \, \dx \\
        & \quad -  \int_{s_1}^{s_2} \int_{\Rd} \eta \Delta \eta \Delta \psi + \nabla \eta \cdot \nabla \psi\Delta \eta \, \dx \, \dt \\
        & \quad - \alpha \int_{s_1}^{s_2} \int_{\Rd} \eta \Delta \rho \Delta \psi + \nabla \eta \cdot \nabla \psi\Delta \rho \, \dx \, \dt \\
        & \quad - \frac{1}{2} \int_{s_1}^{s_2} \int_{\Rd} \eta^2 \Delta \psi \, \dx \, \dt + \omega \int_{s_1}^{s_2} \int_{\Rd} \eta \nabla \rho \cdot\nabla \psi \, \dx \, \dt .
    \end{align*}
    \end{enumerate}
\end{defn}

\begin{thm}\label{thm:main_result_existence two species} Let $(\rho_0,\eta_0)\in\mathcal{P}_2^a(\Rd)\times\mpdtard$ be an initial datum such that $\bm{\mf}[\rho_0,\eta_0]<+\infty$. Then there exists a weak solution to \eqref{eq:two species}.
\end{thm}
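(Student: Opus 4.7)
The plan is to construct a weak solution to the coupled system \eqref{eq:two species} by adapting the JKO scheme used in Section~4 for the single-species equation to the product Wasserstein space $\mpta(\Rd)\times\mpta(\Rd)$, endowed with the squared distance $\dW^2(\rho_1,\rho_2)+\dW^2(\eta_1,\eta_2)$, with the driving functional $\bm{\mf}$ from \eqref{eq:functiona_two_species}. The first step is to verify that $\bm{\mf}$ is bounded from below on its domain. Since $A$ is positive definite, there exists $c>0$ with
\begin{equation*}
    \kappa|\nabla\rho|^2+|\nabla\eta|^2+2\alpha\nabla\rho\cdot\nabla\eta\ \geq\ c\bigl(|\nabla\rho|^2+|\nabla\eta|^2\bigr),
\end{equation*}
so the leading order is coercive in $H^1$. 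The quadratic aggregation part $\tfrac{\beta}{2}\rho^2+\tfrac{1}{2}\eta^2+\omega\rho\eta$ corresponds to exponent $m=2$, which is strictly subcritical for the scaling $m_c=2+2/d$ in every dimension $d\geq 1$. By the same Gagliardo--Nirenberg interpolation used in Theorem~\ref{thm:energy}\ref{thm:subcritical_exp}, $\|\rho\|_{L^2}^2$ and $\|\eta\|_{L^2}^2$ are controlled by a sub-unit power of $\|\nabla\rho\|_{L^2}^2+\|\nabla\eta\|_{L^2}^2$ and the (conserved) masses; absorbing the cross term $\omega\rho\eta$ via Young's inequality yields boundedness from below and coercivity in $H^1\times H^1$.

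Next, I would set up the iterative scheme: given $(\rho_\tau^k,\eta_\tau^k)$, define $(\rho_\tau^{k+1},\eta_\tau^{k+1})$ as a minimiser of
\begin{equation*}
    (\rho,\eta)\ \mapsto\ \frac{1}{2\tau}\bigl(\dW^2(\rho,\rho_\tau^k)+\dW^2(\eta,\eta_\tau^k)\bigr)+\bm{\mf}[\rho,\eta].
\end{equation*}
Existence of minimisers follows from the lower semicontinuity of the squared Wasserstein distance, lower semicontinuity of the Dirichlet part, and upper semicontinuity of the quadratic terms under the $H^1\times H^1$ weak convergence produced by the coercivity above. Standard estimates on the scheme yield the classical energy inequality, bounded total squared displacement $\sum_k\dW^2(\rho_\tau^{k+1},\rho_\tau^k)+\dW^2(\eta_\tau^{k+1},\eta_\tau^k)\lesssim\tau$, uniform $H^1$ bounds, uniform second moments (obtained by testing with $|x|^2$ in the optimality condition as in the one-species case), and hence $\tfrac{1}{2}$-Hölder equicontinuity in $\dW$ of the piecewise constant interpolants $\bm{\sigma}_\tau=(\rho_\tau,\eta_\tau)$.

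The crucial refinement, as in Section~4, is a flow interchange to upgrade regularity. I would dissipate $\bm{\mf}$ along the decoupled heat flow $\partial_t\bar\rho=\Delta\bar\rho$, $\partial_t\bar\eta=\Delta\bar\eta$, whose generator is Boltzmann's entropy $\mh[\rho]+\mh[\eta]$. Using the Bochner/Villani identity, dissipation of the Dirichlet energies under the heat flow produces $\|\Delta\rho\|_{L^2}^2+\|\Delta\eta\|_{L^2}^2$ plus cross terms $\int\Delta\rho\Delta\eta\,\dx$ that, by positive-definiteness of $A$, combine coercively; the negative quadratic contributions give lower order terms controlled via Gagliardo--Nirenberg and Young, together with the uniform $H^1$ bound. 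This provides $L^2_t H^2_x$ estimates for both components uniformly in $\tau$ and, together with the Hölder-in-time bound, yields strong compactness of $\rho_\tau,\eta_\tau$ in $L^2([0,T]\times\Rd)$ via a standard Aubin--Lions/Rossi--Savaré argument, and hence also of $\rho_\tau^2,\eta_\tau^2$.

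Finally, I would derive the discrete Euler--Lagrange equations at each JKO step by perturbing with smooth vector fields as in Section~4: the minimality of $(\rho_\tau^{k+1},\eta_\tau^{k+1})$ produces, for any $\varphi,\psi\in C_c^2(\Rd)$, integral identities involving the transport plans between consecutive iterates, with right-hand sides given by the first variations $\kappa\Delta\rho+\alpha\Delta\eta+\beta\rho+\omega\eta$ and $\alpha\Delta\rho+\Delta\eta+\omega\rho+\eta$ of $\bm{\mf}$. Using the strong compactness of $\rho_\tau,\eta_\tau,\nabla\rho_\tau,\nabla\eta_\tau$ (the latter through interpolation between the $L^2H^2$ and $L^\infty H^1$ bounds) and the weak convergence of the optimal velocity fields built from the transport plans, I pass to the limit $\tau\to 0$ and recover the weak formulation in Definition~\ref{def:Weak solution two species}. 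The main obstacle is the coupled flow interchange: the cross terms generated by the mixed Dirichlet energy $\alpha\int\nabla\rho\cdot\nabla\eta\,\dx$ and by the cross quadratic $\omega\rho\eta$ must be handled simultaneously, requiring that the coercivity of $A$ be used at the level of second derivatives, not merely first, while the lower-order perturbations are absorbed by sub-critical interpolation.
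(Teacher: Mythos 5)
Your proposal follows essentially the same route as the paper: a JKO scheme on the product space with distance $\mw_2^2(\rho_1,\rho_2)+\mw_2^2(\eta_1,\eta_2)$, a lower bound on $\bm{\mf}$ obtained from positive-definiteness of $A$ plus the subcritical ($m=2<2+2/d$) Gagliardo--Nirenberg estimate reducing to the one-species functional $\mf_2$, the standard discrete estimates and H\"older equicontinuity, a flow interchange along the \emph{decoupled} heat semigroup with the cross terms $\int\Delta\rho\,\Delta\eta\,\dx$ absorbed by the coercivity of $A$, Rossi--Savar\'e compactness, and passage to the limit in the discrete Euler--Lagrange equations. This is precisely the structure of Section~\ref{sec:existence_two_species}.

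One step as you state it does not work: you claim existence of minimisers at each JKO step from ``lower semicontinuity of the Dirichlet part, and upper semicontinuity of the quadratic terms under the $H^1\times H^1$ weak convergence.'' Upper semicontinuity of the negative terms $-\tfrac{\beta}{2}\int\rho^2$, $-\tfrac12\int\eta^2$, $-\omega\int\rho\eta$ is the wrong direction for the direct method, and weak $L^2$ (or weak $H^1$) convergence of a minimising sequence gives only $\liminf\|\rho_n\|_{L^2}^2\ge\|\rho\|_{L^2}^2$, which does not control these terms from below in the limit. This is exactly the obstruction recorded in Remark~\ref{rem:no weak l.s.c.}: the functional is \emph{not} weakly lower semicontinuous. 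The paper repairs this by upgrading to strong $L^2$ convergence of the minimising sequence, using the uniform second-moment and $H^1$ bounds together with the Kolmogorov--Riesz--Fr\'echet theorem (Propositions~\ref{prop:Existence minimiser} and~\ref{prop: minimisers two species}); it also needs the algebraic rewriting $\alpha\nabla\rho_n\cdot\nabla\eta_n=\tfrac{\alpha^2}{2}|\nabla(\rho_n+\alpha^{-1}\eta_n)|^2-\tfrac{\alpha^2}{2}|\nabla\rho_n|^2+\tfrac{\kappa}{2}|\nabla\rho_n|^2-\dots$ so that the coercive gradient part is a sum of squares and hence weakly lower semicontinuous. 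With that compactness argument inserted, the rest of your outline goes through as in the paper.
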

The last result is generalised to a wider class of systems allowing for nonlinear self-diffusion terms.

\subsection{Preliminaries}
We present the notation and we collect some \textit{a priori} results that we will use  throughout the manuscript. 

A key tool for the analysis is the Wasserstein metric, that is a distance function in the space of probability measures with finite second order moments.
\begin{defn}[$2$-Wasserstein distance]
    For $\mu$, $\nu \in \mathcal{P}_2 (\Rd)$, we define the $2$-Wasserstein distance,  $\mathcal{W}_2 (\mu, \nu )$, between $\mu$ and $\nu$ as
    \begin{equation*}
        \mathcal{W}_2 (\mu, \nu ) := \min_{\gamma\in\Gamma(\mu,\nu)} \left\lbrace \int_{\Rd \times \Rd} |x-y|^2 \, \mathrm{d} \gamma(x,y) \right\rbrace^{\frac{1}{2}},
    \end{equation*}
    where $\Gamma (\mu, \nu)$ is the set of transport plans between $\mu$ and $\nu$,
    \begin{equation*}
        \Gamma (\mu , \nu ) = \left\lbrace \gamma \in \mathcal{P} (\Rd \times \Rd ) \, : \, (\pi_x)_\# \gamma = \mu, \,(\pi_y)_\# \gamma = \nu \right\rbrace,
    \end{equation*}    
    and $\pi_x$ and $\pi_y$ are the projections onto the first and the second variables respectively.
\end{defn}

 In the expression above, marginals are the push-forward of $\gamma$ through $\pi_i$. For a measure $\rho\in\mP(\Rd)$ and a Borel map $T:\Rd\to\Rn$, $n\in\mathbb{N}$, the push-forward of $\rho$ through $T$ is defined by
$$
 \int_{\Rn}f(y)\,\mathrm{d}T_{\#}\rho(y)=\int_{\Rd}f(T(x))\,\mathrm{d}\rho(x) \qquad \mbox{for all $f$ Borel functions on}\ \Rd.
$$
We refer the reader to \cite{AGS,San15,Vil09} for further details on optimal transport theory and Wasserstein spaces. 

In order to obtain strong convergence of $\rho$ in $L^2 ([0,T];L^2(\Rd))$ we take advantage of a refined version of the Aubin-Lions Lemma for compactness in measures, due to Rossi and Savar\'e. It relies on two conditions: tightness and weak integral equi-continuity.

\begin{prop}[\cite{RS03}, Theorem 2]\label{prop:RS03}Let $X$ be a separable Banach space and consider
\begin{itemize}
    \item a lower semicontinuous functional $\mathcal{I}:X\rightarrow [0,+\infty]$ with relatively compact sublevels in $X$,
    \item a pseudo-distance $g:X\times X\rightarrow [0,+\infty]$,  i.e. $g$ is lower semicontinuous and such that $g(\rho,\eta) = 0$ for any $\rho,\eta\in X$ with $\mathcal{I}[\rho]<\infty$ and $\mathcal{I}[\eta]<\infty$ implies $\rho = \eta$.
\end{itemize}
Let $U$ be a set of measurable functions $u:(0,T)\rightarrow X$, with a fixed $T>0$. Assume further that $U$ is tight with respect to $\mathcal{I}$
\begin{equation}\label{tightness}
\sup_{u\in U}\int_0^T\mathcal{I}[u(t)]\,\mathrm{d}t<\infty\,,\end{equation}
and satisfies the weak integral equi-continuity condition \begin{equation}\label{integral equicontinuity}
\lim_{h\downarrow 0}\sup_{u\in U} \int_0^{T-h} g\left(u(t+h),u(t)\right)\mathrm{d}t = 0\,.
\end{equation}
Then $U$ contains an infinite sequence $(u_n)_{n\in\mathbb{N}}$ convergent in measure, with respect to $t\in(0,T)$, to a measurable $\Tilde{u}:(0,T)\to X$, i.e.
\begin{equation}\label{eq:convergence_measure_rs}
    \lim_{n\rightarrow\infty}\left|\{t\in(0,T):\;\Vert u_n(t) - \Tilde{u}(t)\Vert_X\geq\delta\}\right|=0,\quad\forall\delta\geq 0\,.
\end{equation}
\end{prop}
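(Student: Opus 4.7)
The plan is to adapt the classical Aubin–Lions compactness scheme to a setting where the usual Banach space structure in the time variable is replaced by the pseudo-distance $g$. The argument naturally splits into three steps: pointwise-in-time relative compactness, extraction of a candidate limit on a dense set of times, and upgrading to convergence in measure on the full interval.

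First, I would exploit the tightness condition \eqref{tightness} together with Chebyshev's inequality: for each $R>0$, the set $A_n^R:=\{t\in(0,T):\mathcal{I}[u_n(t)]>R\}$ has Lebesgue measure bounded by $C/R$ uniformly in $n\in U$. On the complement $(0,T)\setminus A_n^R$ the values $u(t)$ lie in the $R$-sublevel of $\mathcal{I}$, which by hypothesis is relatively compact in $X$. Hence for all but a small measure of times, $\{u_n(t)\}_n$ is pre-compact, and a standard diagonal extraction over a countable dense set $D\subset(0,T)$ produces a subsequence (not relabeled) and a pointwise limit map $\tilde u:D\to X$.

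The crucial step is to promote this pointwise convergence on $D$ to convergence in measure on $(0,T)$, and this is where the weak integral equi-continuity \eqref{integral equicontinuity} enters. I would proceed by contradiction: assume \eqref{eq:convergence_measure_rs} fails, so that there exist $\delta>0$, a subsequence, and a set $E\subset(0,T)$ of positive measure on which $\Vert u_n(t)-\tilde u(t)\Vert_X \geq \delta$. Fix a small $h>0$ and, for $t\in E$, compare $u_n(t)$ with $u_n(t+h)$ via $g$. The tightness step places $u_n(t)$ and $u_n(t+h)$ inside a fixed relatively compact set for most $t$, so a further extraction yields limits $\rho,\eta\in X$ with $\mathcal{I}[\rho],\mathcal{I}[\eta]<\infty$. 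The lower semicontinuity of $g$ combined with \eqref{integral equicontinuity} (after letting $h\downarrow 0$) forces $g(\rho,\eta)=0$, hence $\rho=\eta$ by the defining property of the pseudo-distance. Matching this against the pointwise limit $\tilde u$ at times in $D$ contradicts the lower bound $\delta$.

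The hard part will be the last identification, precisely because $g$ need not be a metric and is a priori unrelated to the norm topology on $X$: one cannot chain $g$-equi-continuity into uniform $X$-control directly. The remedy, following the scheme of \cite{RS03}, is to run a careful slicing-in-time argument that integrates the temporal smallness from \eqref{integral equicontinuity} against the pointwise compactness yielded by \eqref{tightness}, and then exploit the rigidity axiom $g(\rho,\eta)=0$ with $\mathcal{I}[\rho],\mathcal{I}[\eta]<\infty \Rightarrow \rho=\eta$ to pin down a unique limit at almost every time. Measurability of $\tilde u$ on $(0,T)$ then follows from its construction as an almost-everywhere pointwise limit of measurable maps, and \eqref{eq:convergence_measure_rs} is exactly the conclusion of the contradiction argument.
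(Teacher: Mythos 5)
The paper does not prove this proposition: it is imported verbatim from Rossi--Savar\'e \cite{RS03}, Theorem~2, so there is no in-paper argument to compare yours against. Judged on its own terms, your sketch contains a genuine gap at its very first step. The tightness hypothesis \eqref{tightness} is an \emph{integral} bound in time: Chebyshev gives, for each fixed $n$, a bad set $A_n^R=\{t:\mathcal{I}[u_n(t)]>R\}$ of measure at most $C/R$, but this set moves with $n$. For a fixed time $t_0$ in your countable dense set $D$ nothing prevents $\mathcal{I}[u_n(t_0)]\to\infty$ along the whole sequence, so $\{u_n(t_0)\}_n$ need not lie in any sublevel of $\mathcal{I}$ and need not be precompact; the diagonal extraction producing a pointwise limit $\tilde u$ on $D$ is therefore not available. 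Even granting such a limit on $D$, the curves $u\in U$ are merely measurable in $t$ --- there is no continuity with which to propagate information from a dense set of times to the rest of $(0,T)$, and the only temporal regularity you have, \eqref{integral equicontinuity}, is an integral condition with respect to $g$, which is a priori unrelated to the norm topology of $X$.

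The second half of the sketch is too vague to repair this: you do not explain how, for times $t$ in a set of positive measure, one extracts convergent subsequences of both $u_n(t)$ and $u_n(t+h)$ with limits of finite $\mathcal{I}$, nor how the rigidity $g(\rho,\eta)=0\Rightarrow\rho=\eta$ is played off against the assumed lower bound $\Vert u_n(t)-\tilde u(t)\Vert_X\ge\delta$, which involves the norm rather than $g$. The actual proof in \cite{RS03} avoids pointwise-in-time extraction altogether: it reduces convergence in measure to a Cauchy-type criterion of the form $\lim_{n,m}\int_0^T 1\wedge\Vert u_n(t)-u_m(t)\Vert_X\,\mathrm{d}t=0$ and verifies it through a double-limit lemma that averages in the time variable, combining \eqref{tightness} and \eqref{integral equicontinuity} only in integrated form. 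If you want to reconstruct the result rather than cite it, that averaging mechanism is the missing idea.
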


In addition to the strong convergence given by  \Cref{prop:RS03}, we will need an $L^2$ bound on $\Delta\rho$ to obtain suitable compactness in time and space for $\nabla\rho$ and $\Delta\rho$. We employ the \textit{flow interchange} technique, developed by Matthes, McCann and Savaré in~\cite{MMS09} and previously used in~\cite{Otto_CPDE98} --- we also refer the reader to~\cite{BE22,CdFEFS20,dFEF18,dFM14} for further details. The idea of the flow interchange consists in considering the dissipation of the free energy $\mf_m$ along a solution of an auxiliary gradient flow, and using the Evolution Variational Inequality (EVI) afterwards to obtain the desired estimate. For the reader's convenience we recall the definition of $\lambda$-flow for a general functional $\mathcal{G}$, which is connected to the EVI.

\begin{defn}[$\lambda$-flow]\label{def:lambda_flow}
    A semigroup $S_{\mathcal{G}} : [0, + \infty] \times \mathcal{P}_2 (\Rd ) \to \mathcal{P}_2 (\Rd )$ is a $\lambda$-flow for a functional $\mathcal{G}: \mathcal{P}_2 (\Rd ) \to \mathbb{R} \cup \{ \infty \}$ with respect to the distance $\mathcal{W}_2$ if, for an arbitrary $\rho \in \mathcal{P}_2 (\mathbb{R}^d)$, we have that the curve $t \mapsto S_{\mathcal{G}}^t \rho$ is absolutely continuous on $[0, \infty)$ and it satisfies the EVI
    \begin{equation}\label{eq:EVI}
        \tag{EVI}
        \frac{1}{2} \frac{\mathrm{d}^+}{\dt} \mathcal{W}_2^2 (S_{\mathcal{G}}^t \rho , \mu ) + \frac{\lambda}{2} \mathcal{W}_2^2 (S_{\mathcal{G}}^t \rho , \mu ) \leq \mathcal{G} [\mu] - \mathcal{G} [S_{\mathcal{G}}^t \rho],
    \end{equation}
    for all $t>0$, with respect to every reference measure $\mu \in \mathcal{P}_2 (\Rd )$ such that $\mathcal{G}[\mu] < \infty$.
\end{defn}

As shown in the seminal work by Jordan, Kinderlehrer and Otto \cite{JKO98}, the heat equation can be regarded as a $2$-Wasserstein steepest descent of the Boltzmann entropy
\begin{equation}\label{eq:Heat entropy}
    \mathcal{E} [\rho] = \begin{cases} \int_\Rd \rho(x) \log \rho (x) \,\dx, &\rho \log \rho \in L^1(\mathbb{R}^d),\\
+\infty & \mbox{otherwise}
\end{cases}
\end{equation}
We mention~\cite{AGS,San15} and the recent~\cite[Chapter 3.3]{FG21} for further details. The functional $\mathcal{E}$ is $0$-convex along geodesics and it possesses a unique $0$-flow, which we denote $S_{\mathcal{E}}$, given by the heat semigroup, see for example \cite{AGS, DS08, dFM14}. We will use the heat equation as the auxiliary flow and the free energy \eqref{eq:Heat entropy} as the auxiliary functional. 

In order to illustrate the method, let us calculate the dissipation of the Boltzmann entropy along solutions of our equation, \eqref{eq:thin_film_intro}. For simplicity, we consider $m = 2$, although the method generalises to other exponents. In this case, a formal computation yields
\begin{align*}
    \frac{\mathrm{d}}{\dt}\int_\Rd\rho\log\rho\,\dx &=- 
    \int_{\mathbb{R}^d} \log \rho\, \dive \left( \rho \nabla (\Delta \rho ) \right)\dx - 2\chi\int_{\mathbb{R}^d} \log \rho\, \dive \left( \rho \nabla \rho \right)\,\dx.
    \\
    & = \int_{\mathbb{R}^d} \nabla \rho\cdot  \nabla (\Delta \rho ) \,\dx + 2\chi\int_{\mathbb{R}^d} |\nabla\rho|^2 \,\dx. 
    \\
    & \leq - \int_\Rd (\Delta\rho)^2\,\dx + 2C,
\end{align*}
where the constant $C>0$ is given in \Cref{prop:Some properties}. By integrating in time, we obtain
\begin{align*}
    \| \Delta \rho \|_{L^2 ((0,T) \times \mathbb{R}^d)}^2 &\leq \me[\rho_0] - \me[\rho_T] + 2CT
    \\
    &\leq\Vert\rho_0\Vert^2_{L^2(\Rd)}- \me[\rho_T] + 2CT.
\end{align*}
It remains to notice that $\me[\rho]$ can be bounded from below by the second moment of $\rho$, $\mathrm{m}_2(\rho)$, which gives the desired $L^2$ bound on $\Delta\rho$. Although this formal computation requires further regularity, it illustrates how we may use an auxiliary flow to obtain $H^2$ estimates for our equation. In \Cref{H2 bound flow interchange}, we shall make this calculation fully rigorous by considering, instead, the dissipation of our energy functional $\mf_m[\rho]$, \eqref{eq:functional}, along solutions of the heat equation with suitable initial data.
\begin{rem}\label{rem:lower bound rho log rho}
    We remind the reader of the following bound for the Boltzmann entropy functional $\me$,
    \begin{equation*}
        \me[\rho] = \int_{\mathbb{R}^d} \rho \log \rho \geq - C (1 + \mathrm{m}_2(\rho)).
    \end{equation*}
  To prove this, let $M(x) := {(2 \pi )^{-d/2}} \exp \left( - {|x|^2}/{2} \right),$
and consider the relative entropy
    \begin{equation*}
        \mathcal{E} (\rho | M) := \int_{\mathbb{R}^d} \rho \log \frac{\rho}{M} \, \dx\,.
    \end{equation*}
Jensen's inequality implies that
    \begin{align*}
        \mathcal{E} (\rho | M) &  \geq \log \left( \int_{\Rd} \frac{\rho}{M}M \dx \right)\int_{\Rd} \frac{\rho}{M} \, \dx  = 0\,,
    \end{align*}
and thus, we conclude that
    \begin{align*}
        0 & \leq \me (\rho | M) = \int_{\Rd} \rho \log \rho\,\dx + \frac{d}{2}\log (2 \pi ) + \frac{1}{2} \int_{\Rd} |x|^2 \rho (x) \, \dx\,,
    \end{align*}
    which implies
    \begin{equation*}
        \int_{\Rd} \rho \log \rho \geq \frac{d}{2}\log (2 \pi ) - \frac{1}{2} \mathrm{m}_2 (\rho )\,. 
    \end{equation*}
\end{rem}

\section{Properties of the energy functional}\label{sec:Energy minimisers} 

The energy~$\mf_m$ plays a crucial role in the analysis of~\eqref{eq:thin_film_intro}, as it provides uniform bounds we hinge on for the construction of weak solutions. Furthermore, in the theory of gradient flows, the dynamical problem is usually related to energy minimisers via stationary states. This is, indeed, a valuable advantage of studying Eq.~\eqref{eq:thin_film_intro} in the Wasserstein space $(\mP_2(\Rd),\mw_2)$. As we shall see below, the Gagliardo--Nirenberg inequality is essential for a thorough study of our problem as it reveals critical regimes. For the reader's convenience we recall it in the lemma below, cf.~for instance~\cite{ Brezis_Mironescu_GN_2018, Nirenberg_1959}.

\begin{lem}[Gagliardo--Nirenberg interpolation inequality]
Let $\theta\in[0,1]$, $1\le p,q\le +\infty$, and $1\le r<\infty$ such that $\frac{1}{p}=\theta\left(\frac{1}{r}-\frac{1}{d}\right)+\frac{1-\theta}{q}$. Then, it holds
\[
\|f\|_{L^p(\Rd)}\le C\|\nabla f\|_{L^r(\Rd)}^\theta\|f\|_{L^q(\Rd)}^{1-\theta},
\]
where $C$ denotes a positive constant depending on $p,q,r,\theta$, but not on $f$.
\end{lem}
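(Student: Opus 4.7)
My approach would be the classical two-step reduction: first establish the Sobolev endpoint inequality corresponding to $\theta=1$, then obtain the full family of inequalities by H\"older interpolation. When $\theta=1$ the scaling condition forces $p = p^{\ast} := \frac{dr}{d-r}$ (provided $1 \le r < d$), and the target becomes the classical Sobolev inequality $\|f\|_{L^{p^{\ast}}(\Rd)} \le C \|\nabla f\|_{L^r(\Rd)}$.

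For the Sobolev endpoint I would proceed by the Gagliardo--Nirenberg--Sobolev slicing argument. In the base case $r=1$, for $f \in C_c^{\infty}(\Rd)$ write $|f(x)| \le \int_{\mathbb{R}} |\partial_i f|\, \mathrm{d}s$ along each coordinate axis, multiply the $d$ resulting bounds, take the $(d-1)$-th root, and integrate using the Loomis--Whitney inequality to conclude $\|f\|_{L^{d/(d-1)}} \le C\|\nabla f\|_{L^1}$. For $1 < r < d$, apply this $r=1$ inequality to $g := |f|^{\gamma}$ with the choice $\gamma := \frac{r(d-1)}{d-r}$, and balance the resulting $\int |f|^{\gamma-1}|\nabla f|\,\dx$ via H\"older; this value of $\gamma$ is dictated precisely so that the exponents collapse to $p^{\ast}$ on the left and $r$ on the right.

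For general $\theta \in (0,1)$, rewrite the scaling condition as
\begin{equation*}
\frac{1}{p} = \frac{\theta}{p^{\ast}} + \frac{1-\theta}{q},
\end{equation*}
split $|f|^p = |f|^{p\theta}\cdot |f|^{p(1-\theta)}$, and apply H\"older with conjugate exponents $\frac{p^{\ast}}{p\theta}$ and $\frac{q}{p(1-\theta)}$ (conjugacy is equivalent to the scaling relation) to obtain $\|f\|_{L^p} \le \|f\|_{L^{p^{\ast}}}^{\theta}\|f\|_{L^q}^{1-\theta}$. Combining with the Sobolev endpoint and extending from $C_c^{\infty}$ to $W^{1,r}\cap L^q$ by density yields the claim.

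The main obstacle is a clean handling of the endpoint regimes. When $r \ge d$ the Sobolev conjugate $p^{\ast}$ is not defined (or negative), so $\theta=1$ is inadmissible and one must either restrict $\theta < 1$ and interpolate against a weaker embedding, invoke the Morrey embedding $W^{1,r} \hookrightarrow L^{\infty}$ in the regime $r > d$, or use BMO-type substitutes at $r=d$. The cases $p = +\infty$ or $q = +\infty$ also require separate treatment, chiefly because H\"older must be replaced by trivial $L^\infty$ bounds. A fully rigorous proof partitions the admissible parameters accordingly, but within each range the same two-step scheme --- Sobolev-type endpoint combined with H\"older interpolation --- applies.
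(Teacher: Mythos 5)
Your proposal is correct, but there is nothing in the paper to compare it against: the authors state this lemma as a recalled classical result, giving no proof and instead citing Nirenberg's original work and the survey of Brezis--Mironescu. Your two-step scheme --- the Gagliardo--Nirenberg--Sobolev slicing/Loomis--Whitney argument for the endpoint $\|f\|_{L^{d/(d-1)}(\Rd)}\le C\|\nabla f\|_{L^1(\Rd)}$, the substitution $g=|f|^{\gamma}$ with $\gamma=\frac{r(d-1)}{d-r}$ to reach general $r<d$, and then H\"older interpolation using the conjugacy $\frac{p\theta}{p^{\ast}}+\frac{p(1-\theta)}{q}=1$, which is exactly the stated scaling relation --- is precisely the standard proof found in those references, and your exponent bookkeeping checks out. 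You are also right to flag that the regimes $r\ge d$, $p=+\infty$, and $q=+\infty$ need separate treatment (Morrey embedding, trivial $L^\infty$ bounds, or exclusion of $\theta=1$); for the purposes of this paper only the case $r=2$ with $1\le p\le 2^{\ast}$ and $q=1$ is ever used, where your argument applies without any of these complications.
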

In the following we set
\[
2^{\ast}:=\begin{cases}
    +\infty &\mbox{if } d=1,2,\\
    \frac{2d}{d-2} & \mbox{if } d\ge3.
\end{cases}
\]

In the proposition below we find a range of exponents for which the free energy $\mf_m$ is bounded from below, thus proving~Theorem~\ref{thm:energy}, case~\ref{thm:subcritical_exp}. In turn, this implies further regularity for the density $\rho$ and provides the critical exponent, $m_c=2+2/d$. 

\begin{prop}[Lower bound for the free energy and induced regularity]\label{prop:Some properties}
     Assume $\rho \in L^1_+(\Rd)$ and let $1\le  m<2+\frac{2}{d}$. Set $\alpha:= 1+\frac{\frac{2}{d}(m-1)}{2+ \frac{2}{d}-m}$, for $m>1$, and $\alpha := 2$, for $m =1$. The following properties hold.
    \begin{enumerate}[label=$(\arabic*)$]
        \item \underline{Lower bound for the free energy}: let $\nabla \rho \in L^2 (\mathbb{R}^d)$, then $\mf_m[\rho]$ is bounded from below as
        \begin{equation}\label{eq:Free energy bounded below}
            \mf_m[\rho] \geq - C \| \rho \|_{L^1(\mathbb{R}^d)}^{\alpha},
        \end{equation}
        where $C = C(m,d,\chi)$.\\
        \item \underline{$H^1$-bound}: assume $\mf_m[\rho] < +\infty$, then the following bound holds
        \begin{equation}\label{eq:grad rho is L2}
            \| \nabla \rho \|^2_{L^2 (\Rd)} \leq C \left( \mf_m[\rho] + \| \rho \|_{L^1(\mathbb{R}^d)}^{\alpha} \right),
        \end{equation}
        where $C=C(m,d,\chi)$.  \\
        \item \underline{$L^p$-regularity}: assume $\mf_m[\rho] < +\infty$, then $\rho \in L^{p} (\Rd)$ for $p\in[1,2^{\ast}]$, where $2^{\ast} := +\infty$ for $d=1,2$,  and $2^{\ast} := \frac{2d}{d-2}$ for $d \geq 3$. Note that $1 \leq m < 2+\frac{2}{d}<2^{\ast}$. In particular,  there exists a constant $C = C(m,p,d,\rho,\chi)>0$ such that
         \begin{equation}\label{eq:control of the Lp norm}
            \| \rho \|_{L^p (\mathbb{R}^d)} \leq C<+\infty .
        \end{equation}
    \end{enumerate}
\end{prop}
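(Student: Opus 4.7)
The plan is to exploit a single application of the Gagliardo--Nirenberg interpolation inequality, which simultaneously reveals why $m_c=2+2/d$ is critical---it is precisely the exponent at which the interpolation power on $\|\nabla\rho\|_{L^2}$ reaches $2$---and allows Young's inequality to absorb the aggregation term into the Dirichlet energy. Once the lower bound~(1) is obtained in this way, the $H^1$-estimate~(2) falls out by simple rearrangement, and~(3) follows from a further, unrelated application of Gagliardo--Nirenberg.

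For the case $m>1$, the first step is to apply Gagliardo--Nirenberg with $p=m$, $q=1$, $r=2$, yielding
\begin{equation*}
\|\rho\|_{L^m}^m \leq C\,\|\nabla\rho\|_{L^2}^{m\theta}\,\|\rho\|_{L^1}^{m(1-\theta)}, \qquad \theta = \frac{2d(m-1)}{m(d+2)}.
\end{equation*}
A direct computation gives $m\theta = 2d(m-1)/(d+2)$, which is strictly below $2$ exactly when $m<2+2/d$. Young's inequality with conjugate exponents $(2/(m\theta),\, 2/(2-m\theta))$ then produces
\begin{equation*}
\|\rho\|_{L^m}^m \leq \varepsilon\,\|\nabla\rho\|_{L^2}^2 + C_\varepsilon\,\|\rho\|_{L^1}^{\alpha}, \qquad \alpha = \frac{2m(1-\theta)}{2-m\theta},
\end{equation*}
and a short algebraic simplification gives $\alpha = (2m+2d-dm)/(2d+2-dm) = 1+\frac{(2/d)(m-1)}{2+2/d-m}$, matching the statement. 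Plugging into $\mf_m[\rho]=\tfrac12\|\nabla\rho\|_{L^2}^2-\tfrac{\chi}{m-1}\|\rho\|_{L^m}^m$ and choosing $\varepsilon$ such that $\chi\varepsilon/(m-1)\leq 1/4$ gives
\begin{equation*}
\mf_m[\rho] \geq \tfrac14\|\nabla\rho\|_{L^2}^2 - C\|\rho\|_{L^1}^\alpha,
\end{equation*}
which delivers both~(1) and~(2) simultaneously.

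The case $m=1$ I would handle separately via the elementary inequality $t\log t \leq t^2$ for all $t\geq 0$, which is trivial for $t\in(0,1)$ (the left-hand side is nonpositive) and follows from $\log t \leq t-1$ for $t\geq 1$. This upgrades the Boltzmann entropy to $\me_1[\rho]\leq \|\rho\|_{L^2}^2$, and the $m=2$ specialisation of the Gagliardo--Nirenberg--Young step above (which is subcritical for every $d$) yields $\|\rho\|_{L^2}^2\leq \varepsilon\|\nabla\rho\|_{L^2}^2+C_\varepsilon\|\rho\|_{L^1}^2$. The same absorption argument then closes the case, producing the exponent $\alpha=2$ as asserted.

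Finally, for~(3), once~(2) is in hand we have $\rho\in H^1(\Rd)\cap L^1(\Rd)$, so one further application of Gagliardo--Nirenberg with target exponent $p\in[1,2^{\ast}]$ gives $\|\rho\|_{L^p}\leq C\|\nabla\rho\|_{L^2}^{\vartheta}\|\rho\|_{L^1}^{1-\vartheta}$ for the corresponding $\vartheta=\vartheta(p,d)\in[0,1]$, and the right-hand side is controlled by~(2). I do not anticipate a genuine obstacle: the only delicate point is the algebraic verification that the Young exponent indeed coincides with the stated formula for $\alpha$, together with the separate treatment of $m=1$ required because the Boltzmann entropy lacks the $1/(m-1)$ prefactor present in the $L^m$ term for $m>1$.
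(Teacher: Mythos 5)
Your proposal is correct and follows essentially the same route as the paper: Gagliardo--Nirenberg on $\|\rho\|_{L^m}$ followed by Young's inequality with exponent $2/(m\theta)$ to absorb the aggregation term into the Dirichlet energy (yielding the same $\alpha$ and both (1) and (2) at once), the reduction of $m=1$ to $m=2$ via $t\log t\le t^2$, and a second Gagliardo--Nirenberg application for (3). No gaps.
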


\begin{proof}
 \textbf{Step 1:} Lower bound for the free energy. Let $1<m<2+\frac{2}{d}$. By applying Gagliardo--Nirenberg inequality to $\| \rho \|_{L^m(\mathbb{R}^d)}$ we find
    \begin{equation*}
        \| \rho \|_{L^m (\mathbb{R}^d)} \leq C \| \nabla \rho \|_{L^2 (\mathbb{R}^d)}^{\theta} \| \rho \|_{L^1(\mathbb{R}^d)}^{1- \theta},
    \end{equation*}
    where $\theta =\frac{2d}{d+2}\frac{m-1}{m}\in(0,1)$.
    By applying Young's inequality with $p= \frac{2}{m \theta} = \frac{1+\frac{2}{d}}{m-1}>1$ and $p'$ its conjugate, we have 
    \begin{equation*}
        \| \rho \|_{L^m (\mathbb{R}^d)}^m \leq   \frac{\varepsilon^p \| \nabla \rho\|_{L^2(\mathbb{R}^d)}^{2}}{p} + \frac{C^{m p'}}{\varepsilon^{p'}} \frac{\|\rho \|^{m (1- \theta) p'}_{L^1(\mathbb{R}^d)}}{p'} .
    \end{equation*}
    Therefore, taking any $0<\varepsilon <\left({p(m-1)}/2\right)^{1/p}$ we obtain the bound 
    \begin{align}
        \mf_m[\rho] & = \frac{1}{2}\| \nabla \rho\|_{L^2(\mathbb{R}^d)}^{2} -\frac{\chi}{m-1}\| \rho \|_{L^m (\mathbb{R}^d)}^m \nonumber\\
        & \geq \left(\frac{1}{2} -\frac{\varepsilon^p}{p(m-1)} \right)\| \nabla \rho\|_{L^2(\mathbb{R}^d)}^{2} - \frac{\chi C^{m p'}}{p'(m-1)\varepsilon^{p'}} \|\rho \|^{m (1- \theta) p'}_{L^1(\mathbb{R}^d)} \label{eq:Lm Young 2}\\
        & \geq - C \| \rho \|_{L^1(\mathbb{R}^d)}^{\alpha},\nonumber
    \end{align}
    where $C=C(m,d,\chi)$, and $\alpha = m(1- \theta) p' = 1+\frac{\frac{2}{d}(m-1)}{2+ \frac{2}{d}-m}$.

    Note that in case of linear diffusion $m=1$, i.e. $\mf_1[\rho]$ as functional, we can argue similarly by using that
    \[
      \mf_1[\rho]\ge\mf_2[\rho]\ge -C \| \rho \|_{L^1(\mathbb{R}^d)}^{2}. 
    \]
    Note that the first inequality holds because $\me_1[\rho]\le\me_2[\rho]$, since $x\log x\le x^2$, for $x>0$.

\noindent
 \textbf{Step 2:} $H^1$-bound. The result follows from \eqref{eq:Lm Young 2} by noting that $\mf_m[\rho]<+\infty$ and choosing again $0<\varepsilon <\left({p(m-1)}/2\right)^{1/p}$ .

\noindent
    \textbf{Step 3:} $L^p$-regularity. From the previous case, we have $\nabla\rho\in L^2(\Rd)$, and thus we can apply Gagliardo--Nirenberg inequality to obtain 
    \begin{equation*}
        \| \rho \|_{L^p (\mathbb{R}^d)} \leq C \| \nabla \rho \|_{L^2 (\mathbb{R}^d)}^{\theta} \| \rho \|_{L^1(\mathbb{R}^d)}^{1- \theta}\leq C<\infty,
    \end{equation*}
    with $\theta = \frac{2d}{d+2}\frac{p-1}{p}\in[0,1]$ for $p\in[1,2^{\ast}]$. Note that for $d\geq 3$ and $p = 2^{\ast}$ we have $\theta = 1$. \qedhere
\end{proof}

In the critical exponent case, $m_c = 2 + \frac{2}{d}$, deriving energy bounds and induced regularity as in~Proposition~\ref{prop:Some properties} reveals the critical mass 
\begin{equation}
       \chi_c := \frac{m_c-1}{2C_{GN}},\label{eq: critical mass}
\end{equation}
where $C_{GN}$ stands for the sharp constant from the Gagliardo--Nirenberg inequality, for $m = m_c$ given by
\begin{equation}\label{eq:crit eq 2}
        \|\rho\|_{L^{m_c}(\mathbb{R}^d)}^{m_c} \leq C_{GN} \| \nabla \rho \|_{L^2(\mathbb{R}^d)}^2 \| \rho \|_{L^1(\mathbb{R}^d)}^{2/d}
\end{equation}

\begin{rem}\label{rem: critical mass}(Critical mass and the parameter $\chi$).  The critical mass in~\eqref{eq: critical mass} is obtained for the sharp Gagliardo--Nirenberg constant, $C_{GN}$. This value is found in \cite{liu2017generalized}, extending to general dimension~\cite{Witel_Bern_Bert_EJAM04}. Note that we refer to $\chi_c$ as the critical mass since we assume that all densities are probability measures with unit mass. However, upon rescaling \eqref{eq:thin_film_intro} using the change of variables 
\[
t\mapsto t/\chi^{\frac{1}{m-2}} \quad\mbox{ and }\quad \rho\mapsto\rho \chi^{\frac{1}{m-2}},
\]
Eq.~\eqref{eq:thin_film_intro} becomes $\partial_t\rho=-\mathrm{div}(\rho\nabla(\Delta\rho))-\Delta\rho^m$. Therefore, one can distinguish between subcritical, critical, and supercritical regimes, in terms of the usual mass $\Vert \rho\Vert_{L^1(\Rd)}$.
\end{rem}
We show that for $\chi\leq \chi_c$ and $m=m_c$ the free energy is bounded from below, which covers~Theorem~\ref{thm:energy}, case~\ref{thm:critical_exp}.
\begin{prop}\label{prop:prop_energies_critical}
    Let $m= m_c$, $\chi\leq \chi_c$, and assume $\rho \in \mathcal{P}^a(\Rd)$, $\nabla \rho \in L^2 (\mathbb{R}^d)$. The free energy \eqref{eq:functional} satisfies the bound
        \begin{equation*}
            \mf_{m_c}[\rho] \geq \| \nabla \rho \|_{L^2(\Rd)}^2 \left( \frac{1}{2} - \frac{{\chi}C_{GN}}{m_c - 1} \| \rho \|_{L^1(\Rd)}^{\frac{2}{d}} \right) \geq 0.
        \end{equation*}
Moreover, if $\chi<\chi_c$ and $\mf_{m_c}[\rho] < +\infty$, then
\begin{equation*}
    \Vert\rho\Vert_{L^p(\Rd)},\Vert\nabla\rho\Vert_{L^2(\Rd)}< C,
\end{equation*} where $C = C(m,d,\chi)$ and for $p\in[1,2^{\ast}]$. Furthermore, for $\chi=\chi_c$, the optimisers of the Gagliardo--Nirenberg inequality \eqref{eq:crit eq 2} are the set of global minimisers of the free energy.
\end{prop}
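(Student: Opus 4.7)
The plan is to feed the sharp Gagliardo--Nirenberg inequality \eqref{eq:crit eq 2} directly into the definition of $\mf_{m_c}$, exactly as in Step~1 of \Cref{prop:Some properties} but now without any need for Young's inequality, since at the critical exponent $m_c=2+2/d$ both the $\|\nabla\rho\|_{L^2}^2$ term and the dissipation term carry the same power of $\|\nabla\rho\|_{L^2}$. Concretely, I would start from
\[
\mf_{m_c}[\rho]=\frac{1}{2}\|\nabla\rho\|_{L^2(\Rd)}^{2}-\frac{\chi}{m_c-1}\|\rho\|_{L^{m_c}(\Rd)}^{m_c},
\]
apply \eqref{eq:crit eq 2} to the last term, and factor out $\|\nabla\rho\|_{L^2}^2$ to obtain
\[
\mf_{m_c}[\rho]\ge \|\nabla\rho\|_{L^2(\Rd)}^{2}\left(\frac{1}{2}-\frac{\chi C_{GN}}{m_c-1}\|\rho\|_{L^1(\Rd)}^{2/d}\right).
\]
Since $\rho\in\mathcal{P}^a(\Rd)$ we have $\|\rho\|_{L^1}=1$, and by the definition \eqref{eq: critical mass} of $\chi_c$, the parenthesis equals $\tfrac{1}{2}(1-\chi/\chi_c)$, which is nonnegative for $\chi\le\chi_c$. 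This proves the first claim.

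For the second claim, when $\chi<\chi_c$ the coefficient above is strictly positive, and from $\mf_{m_c}[\rho]<+\infty$ we immediately deduce an upper bound on $\|\nabla\rho\|_{L^2(\Rd)}^2$ depending only on $m,d,\chi$. The $L^p$ bounds for $p\in[1,2^\ast]$ then follow verbatim from Step~3 of \Cref{prop:Some properties}, via the Gagliardo--Nirenberg inequality applied to $\|\rho\|_{L^p}$ with exponent $\theta=\tfrac{2d}{d+2}\tfrac{p-1}{p}$ and the fact that $\|\rho\|_{L^1}=1$.

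For the third claim, specialising to $\chi=\chi_c$ collapses the lower bound to $\mf_{m_c}[\rho]\ge 0$, so the infimum of $\mf_{m_c}$ over $\mathcal{P}^a(\Rd)\cap\{\nabla\rho\in L^2\}$ is at most $0$. Tracing back the chain of inequalities, equality $\mf_{m_c}[\rho]=0$ forces equality in \eqref{eq:crit eq 2}; conversely, any optimiser $\rho_\ast\in\mathcal{P}^a(\Rd)$ of the Gagliardo--Nirenberg inequality saturates the estimate and hence yields $\mf_{m_c}[\rho_\ast]=0$. Thus the set of minimisers of $\mf_{m_c}$ coincides exactly with the set of Gagliardo--Nirenberg optimisers with unit mass.

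I do not expect a serious obstacle here: the argument is a direct plug-in of the sharp Gagliardo--Nirenberg inequality, whose sharp constant was established in \cite{liu2017generalized}. The only subtle point is the identification of minimisers in the critical case, which requires being careful that the two inequalities used (the pointwise sharp GN and the trivial $\tfrac{1}{2}-\tfrac{\chi_c C_{GN}}{m_c-1}=0$) are both tight simultaneously --- but this is immediate once one writes the chain of estimates as equalities.
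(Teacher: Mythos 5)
Your proposal is correct and follows essentially the same route as the paper: plug the sharp Gagliardo--Nirenberg inequality \eqref{eq:crit eq 2} into the definition of $\mf_{m_c}$, use $\|\rho\|_{L^1}=1$ and the definition of $\chi_c$ to get nonnegativity, and then read off the $H^1$ and $L^p$ bounds and the identification of minimisers (the paper dismisses these last points as ``simple consequences,'' which you spell out in the same spirit). The only caveat, inherited from the statement itself, is that the bound on $\|\nabla\rho\|_{L^2}$ really depends on the value of $\mf_{m_c}[\rho]$ and not only on $(m,d,\chi)$.
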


\begin{proof}
    From the Gagliardo--Nirenberg inequality \eqref{eq:crit eq 2}, we can deduce that
    \begin{align*}
        \mf_{m_c}[\rho] & = \frac{1}{2} \| \nabla \rho \|_{L^2(\Rd)}^2 - \frac{\chi}{m_c - 1} \| \rho \|_{L^{m_c}(\Rd)}^{m_c} \\
        & \geq \| \nabla \rho \|_{L^2(\Rd)}^2 \left( \frac{1}{2} - \frac{\chi C_{GN}}{m_c - 1} \| \rho \|_{L^1(\Rd)}^{\frac{2}{d}} \right).
    \end{align*}
    In particular, since $\chi \leq \chi_c$ and $\| \rho \|_{L^1(\Rd)} = 1$, we obtain
    \begin{equation*}
        \mf_{m_c} [\rho] \geq 0.
    \end{equation*}
The last properties are simple consequences of the Gagliardo--Nirenberg inequality \eqref{eq:crit eq 2}, the definition of the free energy and $\chi<\chi_c$.
\end{proof}

Summarising the previous two propositions, using the Gagliardo--Nirenberg inequality we showed that the free energy $\mf_{m_c}[\rho]$ is uniformly bounded from below when the exponent $m$ is subcritical, $1 \leq m < m_c$, or when $m=m_c$ and we have subcritical or critical mass, $\chi \leq \chi_c$. Moreover, this induces further regularity in the subcritical exponent and critical exponent with subcritical mass cases. In section~\ref{sec:existence_one_species}, we use this information to prove existence of weak solutions to~\eqref{eq:thin_film_intro}.

In order to gain further intuitions on the remaining cases, $m=m_c$ with $\chi\ge\chi_c$ and $m>m_c$,  we study energy minimisers distinguishing between the two cases. 

\subsection{Critical exponent case}
First, we focus on the critical case given by $m = m_c = 2 + \frac{2}{d}$, and study properties of the free energy  \eqref{eq:functional}, following ideas from~\cite{blanchet2009critical}. This highlights an interesting connection with Patlak-Keller-Segel sytems \cite{Carrillo_Craig_Yao19}, and more broadly with aggregation-diffusion equations, as mentioned in the introduction.

A crucial observation concerns the homogeneity of the energy funcional $\mf_{m_c}$: mass-preservation dilation implies that, in this critical case, the aggregation and diffusion terms in the energy functional \eqref{eq:functional} have the same homogeneity.

\begin{lem}[Scaling properties of the free energy]\label{lem: scaling energy} 
Assume $\rho\in L^{m_c}(\Rd)$ such that $\nabla\rho\in L^2(\Rd)$. Let $\rho_\lambda(x) := \lambda^d\rho(\lambda x)$, for any $x\in\Rd$, then
\begin{equation*}
    \Vert\rho_\lambda\Vert_{L^{m_c}(\Rd)}^{m_c} = \lambda^{d+2} \Vert\rho\Vert_{L^{m_c}(\Rd)}^{m_c},\quad  \Vert\nabla\rho_\lambda\Vert_{L^{2}(\Rd)}^{2} = \lambda^{d+2} \Vert\nabla\rho\Vert_{L^{2}(\Rd)}^2,
\end{equation*}
for all $\lambda\in(0,+\infty)$. In particular,
    \begin{equation*}
        \mf_{m_c}[\rho_\lambda] = \lambda^{d+2}\mf_{m_c}[\rho].
    \end{equation*}
\end{lem}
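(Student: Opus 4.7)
The proof is a direct change-of-variables computation for each of the two terms comprising $\mf_{m_c}$, followed by checking that both powers of $\lambda$ coincide precisely because $m = m_c = 2 + 2/d$.

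First I would compute $\|\rho_\lambda\|_{L^{m_c}}^{m_c}$. Writing out the integral and substituting $y = \lambda x$ (so $\dx = \lambda^{-d}\, \mathrm{d}y$), one gets
\[
\|\rho_\lambda\|_{L^{m_c}}^{m_c} = \int_\Rd \lambda^{d m_c} \rho(\lambda x)^{m_c}\,\dx = \lambda^{d(m_c-1)} \|\rho\|_{L^{m_c}}^{m_c}.
\]
With $m_c - 1 = 1 + 2/d$, the exponent equals $d+2$, which yields the first identity. Second, I would compute the gradient: $\nabla \rho_\lambda(x) = \lambda^{d+1} (\nabla \rho)(\lambda x)$, and hence
\[
\|\nabla \rho_\lambda\|_{L^2}^2 = \int_\Rd \lambda^{2(d+1)} |\nabla \rho(\lambda x)|^2\,\dx = \lambda^{2(d+1) - d} \|\nabla \rho\|_{L^2}^2 = \lambda^{d+2} \|\nabla \rho\|_{L^2}^2,
\]
using the same change of variables.

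Finally, since both the $L^2$-gradient term and the $L^{m_c}$ term in $\mf_{m_c}[\rho_\lambda]$ carry the same homogeneity factor $\lambda^{d+2}$, one concludes
\[
\mf_{m_c}[\rho_\lambda] = \tfrac{1}{2}\lambda^{d+2}\|\nabla \rho\|_{L^2}^2 - \tfrac{\chi}{m_c-1}\lambda^{d+2}\|\rho\|_{L^{m_c}}^{m_c} = \lambda^{d+2} \mf_{m_c}[\rho].
\]
There is no real obstacle here; the content of the lemma is the observation that $m_c = 2+2/d$ is precisely the exponent for which the diffusion and aggregation terms scale identically under the mass-preserving dilation $\rho \mapsto \lambda^d \rho(\lambda \cdot)$, so the only thing worth highlighting in the proof is that the matching exponents $d+2$ arise from $d(m_c-1)$ on one side and $2(d+1)-d$ on the other.
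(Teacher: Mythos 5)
Your proof is correct and follows essentially the same route as the paper: a direct change of variables $y=\lambda x$ applied separately to the $L^{m_c}$ and gradient terms, with the observation that $d(m_c-1)=d+2=2(d+1)-d$ makes the two homogeneities match. (If anything, your version is slightly cleaner than the paper's displayed computation, whose first line carries a harmless typo $\lambda^{2d}$ in place of $\lambda^{2d+2}$ for the gradient prefactor.)
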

\begin{proof}
    We have 
    \begin{align*}
        \mf_{m_c}[\rho_\lambda] &= \frac{\lambda^{2d}}{2}\int_\Rd|\nabla\rho(\lambda x)|^2\,\dx-\frac{\chi\lambda^{ dm_c}}{m_c-1}\int_\Rd \rho^{m_c}(\lambda x)\,\dx 
        \\
       & = \frac{\lambda^{d+2}}{2}\int_\Rd|\nabla\rho( x)|^2\,\dx-\frac{\chi \lambda^{d(m_c-1)}}{m_c-1}\int_\Rd \rho^{m_c}( x)\,\dx 
        \\ & = \lambda^{d+2}\mf_{m_c}[\rho]\,,
    \end{align*}
    since $d (m_c - 1) = d+2$.
\end{proof}

Next, we study the infimum of the free energy $\mf_{m_c}$. Let us define $\mu_\chi := \inf_{\rho\in\mathcal{Y}}\mf_{m_c}[\rho]$, where
\begin{equation*}
    \mathcal{Y} = \{\rho\in \mathcal{P}^a(\Rd)\cap L^{m_c}(\Rd):\nabla\rho\in L^2(\Rd)\}.
\end{equation*}
The next result completes~Theorem~\ref{thm:energy}, case~\ref{thm:critical_exp}.
\begin{prop}[Infimum of the free energy] \label{prop: infimum free energy}
We have
\begin{equation*}
    \mu_\chi = \begin{cases}
    0 & \mbox{if } \chi\in(0,\chi_c]\,,
    \\
    -\infty & \mbox{if } \chi>\chi_c\,.
    \end{cases}
\end{equation*}
    Moreover, for $\rho\in\mathcal{Y}$,
    \begin{equation}
    \label{eq: inequality infimum}
       \left(\chi_c-\chi\right)\Vert\nabla\rho\Vert_{L^2(\Rd)}\leq\frac{(m_c-1)\mf_{m_c}[\rho]}{C_{GN}}\leq\left(\chi_c+\chi\right)\Vert\nabla\rho\Vert_{L^2(\Rd)},
    \end{equation}
    where the critical mass $\chi_c$ is defined in \eqref{eq: critical mass} and $C_{GN}$ is the sharp constant in the Gagliardo--Nirenberg inequality \eqref{eq:crit eq 2}. In particular, the infimum is not achieved for $\chi<\chi_c$, and there exists a minimiser in $\mathcal{Y}$ for $\chi = \chi_ c$.
\end{prop}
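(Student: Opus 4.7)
The cornerstone is the scaling identity from Lemma~\ref{lem: scaling energy}, which reduces the study of $\mu_\chi$ to the sign of $\mf_{m_c}[\rho]$ on a single suitably chosen density. I would first establish the two-sided inequality~\eqref{eq: inequality infimum}. The lower bound is an immediate consequence of the sharp Gagliardo--Nirenberg inequality~\eqref{eq:crit eq 2} combined with $\|\rho\|_{L^1}=1$ and the identity $\frac{1}{2}=\frac{\chi_c C_{GN}}{m_c-1}$ coming from the definition~\eqref{eq: critical mass}: substituting into $\mf_{m_c}[\rho]=\frac{1}{2}\|\nabla\rho\|_{L^2}^2-\frac{\chi}{m_c-1}\|\rho\|_{L^{m_c}}^{m_c}$ immediately gives the left inequality. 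The upper bound follows by simply discarding the nonnegative term $\frac{\chi}{m_c-1}\|\rho\|_{L^{m_c}}^{m_c}$ (the $+\chi$ on the right-hand side is harmless since $\chi>0$). Note that the gradient norm should be squared here, consistent with the homogeneity of Lemma~\ref{lem: scaling energy}.

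Next, to determine $\mu_\chi$, I would invoke the mass-preserving dilation $\rho_\lambda(x)=\lambda^d\rho(\lambda x)$, which by Lemma~\ref{lem: scaling energy} satisfies $\mf_{m_c}[\rho_\lambda]=\lambda^{d+2}\mf_{m_c}[\rho]$. For $\chi\in(0,\chi_c]$, the left inequality of~\eqref{eq: inequality infimum} gives $\mf_{m_c}\geq 0$ on $\mathcal{Y}$, hence $\mu_\chi\geq 0$; on the other hand, for any fixed $\rho\in\mathcal{Y}$ one can let $\lambda\downarrow 0$ to obtain $\mf_{m_c}[\rho_\lambda]\to 0$ when $\chi<\chi_c$ (so $\mu_\chi\leq 0$), while for $\chi=\chi_c$ any optimiser $\rho^\ast$ of the Gagliardo--Nirenberg inequality~\eqref{eq:crit eq 2} already satisfies $\mf_{m_c}[\rho^\ast]=0$ and thus realises the infimum. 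For $\chi>\chi_c$, evaluating $\mf_{m_c}$ on such an optimiser $\rho^\ast$ yields $\mf_{m_c}[\rho^\ast]=\frac{C_{GN}(\chi_c-\chi)}{m_c-1}\|\nabla\rho^\ast\|_{L^2}^2<0$, and scaling $\rho^\ast_\lambda$ with $\lambda\uparrow\infty$ drives the energy to $-\infty$, establishing $\mu_\chi=-\infty$.

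Finally, to show the infimum is not attained when $\chi<\chi_c$: if some $\rho\in\mathcal{Y}$ were a minimiser, then $\mf_{m_c}[\rho]=0$ combined with the strict positivity $\chi_c-\chi>0$ in the lower bound of~\eqref{eq: inequality infimum} would force $\|\nabla\rho\|_{L^2}=0$, hence $\rho$ constant, which is incompatible with $\rho\in\mathcal{P}^a(\Rd)\cap L^{m_c}(\Rd)$. In the critical case $\chi=\chi_c$, attainment follows from the existence of optimisers for~\eqref{eq:crit eq 2} (granted by~\cite{liu2017generalized}), which saturate the lower bound in~\eqref{eq: inequality infimum} with $\mf_{m_c}=0$ and thus form the set of global minimisers.

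\textbf{Main obstacle.} The only nontrivial ingredient is the existence of Gagliardo--Nirenberg optimisers with the sharp constant $C_{GN}$ in general dimension, which I would simply quote from~\cite{liu2017generalized}; everything else reduces to algebraic manipulations exploiting the crucial identity $\frac{1}{2}=\frac{\chi_c C_{GN}}{m_c-1}$ and the scale-invariance highlighted in Lemma~\ref{lem: scaling energy}.
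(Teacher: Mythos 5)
Your proposal is correct and follows essentially the same route as the paper's proof: the two-sided Gagliardo--Nirenberg bound \eqref{eq: inequality infimum}, the dilation identity of Lemma~\ref{lem: scaling energy} sent to $\lambda\downarrow 0$ (resp.\ $\lambda\uparrow\infty$) to pin down $\mu_\chi$ in the subcritical/critical (resp.\ supercritical) regimes, and the optimisers of \eqref{eq:crit eq 2} from \cite{liu2017generalized} for attainment at $\chi=\chi_c$. The only deviation is in the case $\chi>\chi_c$, where you evaluate the energy on an exact Gagliardo--Nirenberg optimiser before rescaling, whereas the paper uses a near-optimiser whose quotient exceeds $\delta C_{GN}$ with $\delta\in(\chi_c/\chi,1)$ --- both are valid (the existence of exact optimisers is quoted anyway for the critical-mass case), and you are also right that the gradient norm in \eqref{eq: inequality infimum} should appear squared.
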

\begin{proof}
    Let $\rho\in\mathcal{Y}$. By Gagliardo-Nirenberg inequality \eqref{eq:crit eq 2},
     \begin{align*}
        \mf_{m_c}[\rho] & \geq \| \nabla \rho \|_{L^2(\Rd)}^2 \left( \frac{1}{2} - \frac{\chi C_{GN}}{m_c - 1} \| \rho \|_{L^1(\Rd)}^{{2}/{d}} \right)
        \\
        &  = \| \nabla \rho \|^2_{L^2(\Rd)} \left(\chi_c-\chi\right)\frac{C_{GN}}{m_c-1},
    \end{align*}
    and also
    \begin{align*}
        \mf_{m_c}[\rho] & \leq \frac{1}{2}\Vert\nabla\rho\Vert^2_{L^2(\Rd)} + \frac{\chi}{m_c-1}\Vert\rho\Vert^{m_c}_{L^{m_c}(\Rd)}
        \\
       & \leq \| \nabla \rho \|^2_{L^2(\Rd)} \left( \chi_c+\chi\right)\frac{C_{GN}}{m_c-1},
    \end{align*}
    which gives \eqref{eq: inequality infimum}.

\emph{Case I:} $\chi\leq \chi_c$. We first show $\mu_\chi = 0$. From \eqref{eq: inequality infimum} we see that $\mu_\chi\geq 0$. Let $u_\varepsilon(x) = \varepsilon^d u(\varepsilon x)$, where $u\in\mathcal{Y}$.
Then, $u_\varepsilon\in\mathcal{Y}$ and by \Cref{lem: scaling energy}, we have $\Vert\nabla u_\varepsilon\Vert_{L^2(\Rd)} = O(\varepsilon^{d/2+1})$. Hence, by sending $\varepsilon\downarrow 0$ in \eqref{eq: inequality infimum} we obtain $\mu_\chi =0$.

Next note that if $\chi<\chi_c$ the inequality in \eqref{eq: inequality infimum} is strict and the infimum cannot be achieved. When the mass is critical, $\chi=\chi_c$, we exploit~\cite{liu2017generalized}, where equality in the Gagliardo--Nirenberg inequality is proven for a non-negative radial symmetric function that can be chosen in $\mathcal{Y}$. In particular, we have a minimiser for $\mf_{m_c}$. Moreover, all minimisers coincide with scalings of this fixed profile, that is, the set of global minimisers is given by the optimisers of  
the Gagliardo--Nirenberg inequality \eqref{eq:crit eq 2}.

\emph{Case II:} $\chi>\chi_c$. The arguments presented here are inspired by~\cite{weinstein1982nonlinear}. Fix $\delta \in\left(\chi_c/\chi,1\right) $. Due to the Gagliardo--Nirenberg inequality, there exists a nonzero function $\rho^*\in L^{m_c}(\Rd)$ with $\nabla\rho^*\in L^2(\Rd)$ such that
\begin{equation}
\label{eq: supercritical mass inequality}
     C_{GN}\delta\leq\frac{\Vert\rho^*\Vert^{m_c}_{L^{m_c}(\Rd)}}{\Vert\nabla\rho^*\Vert^2_{L^2(\Rd)}\Vert\rho^*\Vert^{2/d}_{L^1(\Rd)}}\leq C_{GN}\,.
\end{equation}
Now let $\lambda>0$, and consider the function $\rho_\lambda(x) = \lambda^d\rho^*\left(\lambda \Vert\rho^*\Vert_{L^1(\Rd)}^{1/d}x\right)$. It is easy to check $\rho_\lambda\in\mathcal{Y}$. From~\Cref{lem: scaling energy}, \eqref{eq: supercritical mass inequality}, and the definition of the critical mass \eqref{eq: critical mass}, we obtain 
\begin{align*}
    \mf_{m_c}[\rho_\lambda] & = \frac{\lambda^{d+2}}{\|\rho^*\|_{L^1(\Rd)}}\left[\frac{\Vert\nabla\rho^*\Vert_{L^2(\Rd)}^2\|\rho^*\|_{L^1(\Rd)}^{2/d}}{2}-\frac{\chi\Vert\rho^*\Vert_{L^{m_c}(\Rd)}^{m_c}}{m_c-1}\right]
    \\
    & = \frac{\lambda^{d+2}}{2}\|\rho^*\|_{L^1(\Rd)}^{2/d-1}\Vert\nabla\rho^*\Vert_{L^2(\Rd)}^2\left[1-\frac{2\chi}{m_c-1}\frac{\Vert\rho^*\Vert_{L^{m_c}(\Rd)}^{m_c}}{\Vert\rho^*\Vert_{L^{1}(\Rd)}^{2/d}\Vert\nabla\rho^*\Vert_{L^{2}(\Rd)}^{2}}\right]
    \\
    & \leq \frac{\lambda^{d+2}}{2}\|\rho^*\|_{L^1(\Rd)}^{2/d-1}\Vert\nabla\rho^*\Vert_{L^2(\Rd)}^2\left(1 - \frac{\chi}{\chi_c}\delta\right)\,.
\end{align*}
Owing to the choice of $\delta$ and taking the limit $\lambda\rightarrow+\infty$ we obtain $\mu_\chi = -\infty$.

\end{proof}

\subsubsection{Self-similarity}\label{sec:self_similarity}
In the critical case $m = m_c= 2 + \frac{2}{d}$ we may assume the self-similar ansatz
 \begin{equation}
     \label{eq:self_similar_ansatz}
     \rho(x,t) = t^{-a}u(xt^{-b})\,.
 \end{equation}
 Mass conservation gives the usual relation between the exponents $a = b d$. Moreover, assuming \eqref{eq:self_similar_ansatz} is a solution of \eqref{eq:thin_film_intro}, we obtain 
 \begin{equation*}
     a u +b\nabla u(z)\cdot z = \dive \left(u\nabla(\Delta u(z))\right) + \chi\Delta u(z)^{m_c}\,
 \end{equation*}
 with
 \begin{equation*}
     b =\frac{1}{d+4},\quad a= b d.
 \end{equation*}
In particular, we obtain the equation 
 \[
\dive \left(u\nabla(\Delta u(z))\right) + \chi\Delta u(z)^{m_c}-b \, \dive(zu)=0,
 \]
which is the equation for steady states of the corresponding evolution problem
\begin{equation}\label{eq:4ord-fokker-planck}
\partial_t u=- \dive \left(u\nabla(\Delta u(z))\right) -\chi \Delta u(z)^{m_c}+b \, \dive(zu).
\end{equation}
The above evolution PDE is (at least formally) a $2$-Wasserstein gradient flow of the energy
\[
\mathcal{L}[u]=\mathcal{F}_{m_c}[u]+\frac{b}{2}\int|z|^2u(z)\,\mathrm{d}z.
\]
For this energy, one can prove existence of minimisers using the direct method of Calculus of Variations. The main advantage with respect to the minimisation of $\mf_{m}$ is the presence of the additional term, fundamental for the compactness of the minimising sequence, as we shall see also in~Proposition~\ref{prop:Existence minimiser}. As the proof of the latter proposition applies to a wider range of exponents, including $m=m_c$, we postpone this proof to~Section~\ref{sec:existence_one_species}, below that of~Proposition~\ref{prop:Existence minimiser}. 

\begin{prop}[Existence of minimisers for $\mathcal{L}$]\label{prop:minimisers_fokker_planck}
Given $\chi<\chi_c$,  the functional $\mathcal{L}:\mP^a(\Rd)\to[-\infty,+\infty]$ admits minimisers in the set $\{u\in \mathcal{P}^a(\Rd): \nabla u\in L^2(\Rd), \, \mathrm{m}_2(u)<\infty\}$.
\end{prop}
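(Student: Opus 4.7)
The plan is to apply the direct method of the calculus of variations. First, since $\chi<\chi_c$, Proposition~\ref{prop:prop_energies_critical} gives $\mf_{m_c}[u]\ge 0$ for every admissible $u$ (all of which have $\|u\|_{L^1(\Rd)}=1$), so $\mathcal{L}[u]\ge\tfrac{b}{2}\mathrm{m}_2(u)\ge 0$ and $\mu:=\inf\mathcal{L}$ is finite and nonnegative. For a minimising sequence $\{u_n\}$, the uniform bound $\mathcal{L}[u_n]\le C$ combined with the strictly positive coefficient $\tfrac{1}{2}-\tfrac{\chi C_{GN}}{m_c-1}$ controlling $\mf_{m_c}$ in Proposition~\ref{prop:prop_energies_critical} yields
\[
\|\nabla u_n\|_{L^2(\Rd)}\le C, \qquad \mathrm{m}_2(u_n)\le C,
\]
and the same proposition then gives a uniform bound on $\|u_n\|_{L^p(\Rd)}$ for every $p\in[1,2^\ast]$, in particular for $p=m_c$ and for a fixed $q\in(m_c,2^\ast)$.

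After extracting a subsequence, $u_n\rightharpoonup u$ weakly in $H^1(\Rd)$ and almost everywhere; by Rellich--Kondrachov, $u_n\to u$ strongly in $L^p(B_R)$ for every $R>0$ and every $p\in[1,2^\ast)$. The uniform second moment bound yields tightness via the Chebyshev-type estimate $\int_{|x|>R}u_n\,\mathrm{d}x\le \mathrm{m}_2(u_n)/R^2$, and interpolating $\|\cdot\|_{L^{m_c}(\Rd\setminus B_R)}$ between $\|\cdot\|_{L^1(\Rd\setminus B_R)}$ and $\|\cdot\|_{L^q(\Rd\setminus B_R)}$ upgrades local to global strong convergence, $u_n\to u$ in both $L^{m_c}(\Rd)$ and $L^1(\Rd)$. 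Consequently $u\ge 0$ and $\|u\|_{L^1(\Rd)}=1$, while weak $H^1$-convergence and Fatou's lemma give $\nabla u\in L^2(\Rd)$ and $\mathrm{m}_2(u)<\infty$, so $u$ is admissible.

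It remains to show $\mathcal{L}[u]\le\mu$. Weak lower semicontinuity of the $L^2$-norm gives $\|\nabla u\|_{L^2(\Rd)}^2\le\liminf\|\nabla u_n\|_{L^2(\Rd)}^2$; Fatou applied to $|x|^2 u_n\to|x|^2u$ a.e.\ gives $\mathrm{m}_2(u)\le\liminf\mathrm{m}_2(u_n)$; and the strong convergence in $L^{m_c}(\Rd)$ gives $\|u_n\|_{L^{m_c}(\Rd)}^{m_c}\to\|u\|_{L^{m_c}(\Rd)}^{m_c}$. Combining these, $\mathcal{L}[u]\le\liminf\mathcal{L}[u_n]=\mu$, so $u$ minimises $\mathcal{L}$. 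The main obstacle is handling the aggregation term $\|\cdot\|_{L^{m_c}(\Rd)}^{m_c}$, which enters $\mathcal{L}$ with the ``wrong sign'' for weak lower semicontinuity; local Rellich--Kondrachov alone does not suffice, and it is precisely the quadratic confinement $\tfrac{b}{2}\mathrm{m}_2(\cdot)$ in $\mathcal{L}$ (absent from $\mf_{m_c}$ alone) that prevents escape of mass at infinity and allows the upgrade to global strong $L^{m_c}$-convergence. This is exactly why $\mathcal{L}$ attains its infimum in the subcritical-mass regime, while $\mf_{m_c}$ does not by Proposition~\ref{prop: infimum free energy}.
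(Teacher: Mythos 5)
Your proof is correct and follows essentially the same direct-method strategy as the paper: nonnegativity and coercivity of $\mathcal{L}$ from the Gagliardo--Nirenberg inequality for $\chi<\chi_c$, uniform $H^1$, $L^p$ and second-moment bounds on a minimising sequence, strong $L^{m_c}$ compactness to handle the negative term, and lower semicontinuity of the gradient and confinement terms. The only (interchangeable) difference is the compactness mechanism: you use local Rellich--Kondrachov plus second-moment tightness and $L^1$--$L^q$ interpolation to globalise, whereas the paper invokes the Kolmogorov--Riesz--Fr\'echet theorem as in its Proposition on existence of minimisers for the JKO functional, and treats the second-moment term by cut-off and monotone convergence rather than Fatou.
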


A natural question to ask is whether one can characterise energy minimisers, in the spirit of \cite{blanchet2009critical,carrillo2015ground,Calvez_Carrillo_Hoffmann17b}, and check if these are steady states of Eq.~\eqref{eq:4ord-fokker-planck}. In turn, one would be able to characterise self-similar profiles for~\eqref{eq:thin_film_intro}. 

As mentioned in Remark~\ref{rem:addition_external_potential}, Eq.~\eqref{eq:4ord-fokker-planck} admits weak solutions arguing as in Section~\ref{sec:existence_one_species}. Studying the long-time behaviour of solutions to~\eqref{eq:4ord-fokker-planck} is also another interesting open problem we leave to future investigation, as well as a thorough study of energy minimisers for the subcritical case $m<m_c$.

\subsection{Supercritical exponent case}
We study the infimum of the free energy $\mf_m$ when $m > m_c$ , i.e. it is supercritical. As before, we define the set
\begin{equation*}
    \mathcal{Y} = \{\rho\in \mathcal{P}^a(\Rd)\cap L^{m}(\Rd):\nabla\rho\in L^2(\Rd)\},
\end{equation*}
and we prove that the free energy is not bounded from below. This is, indeed, Theorem~\ref{thm:energy}, case~\ref{thm:supercritical_exp}.
\begin{prop}[Infimum of the free energy]\label{prop: free energy supercritical}
   Assume $m>m_c$. Then 
    \begin{equation*}
        \inf_{\rho \in \mathcal{Y}} \mf_m [\rho] = - \infty.
    \end{equation*}
\end{prop}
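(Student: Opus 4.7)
The plan is to exploit the mismatch in scaling between the two terms of $\mf_m$ under mass-preserving dilations, in direct analogy with Lemma~\ref{lem: scaling energy} but now with the aggregation term dominating as we concentrate mass.

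First I would pick any fixed nontrivial test density $\rho \in \mathcal{Y}$ (for instance a smooth compactly supported probability density, which clearly satisfies $\nabla\rho\in L^2(\Rd)$ and $\rho\in L^m(\Rd)$). Then, following the ansatz used in the critical case, I would introduce the one-parameter family
\[
\rho_\lambda(x) := \lambda^{d}\rho(\lambda x), \qquad \lambda>0,
\]
which preserves the $L^1$ norm, so $\rho_\lambda\in\mathcal{P}^a(\Rd)$, and by a direct change of variables, it remains in $\mathcal{Y}$ with
\[
\|\nabla\rho_\lambda\|_{L^2(\Rd)}^2 = \lambda^{d+2}\|\nabla\rho\|_{L^2(\Rd)}^2, \qquad \|\rho_\lambda\|_{L^m(\Rd)}^{m} = \lambda^{d(m-1)}\|\rho\|_{L^m(\Rd)}^{m}.
\]

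Plugging these into the definition of $\mf_m$ yields
\[
\mf_m[\rho_\lambda] = \frac{\lambda^{d+2}}{2}\|\nabla\rho\|_{L^2(\Rd)}^2 - \frac{\chi\,\lambda^{d(m-1)}}{m-1}\|\rho\|_{L^m(\Rd)}^{m}.
\]
The crucial observation is that the hypothesis $m>m_c=2+\tfrac{2}{d}$ is exactly equivalent to $d(m-1)>d+2$, so the negative aggregation term grows strictly faster than the positive Dirichlet term as $\lambda\to+\infty$. Since $\|\rho\|_{L^m(\Rd)}>0$, we conclude $\mf_m[\rho_\lambda]\to-\infty$, which gives $\inf_{\rho\in\mathcal{Y}}\mf_m[\rho]=-\infty$.

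There is no real obstacle here: the only point that requires any care is verifying that the chosen test density actually lies in $\mathcal{Y}$ with both $\|\nabla\rho\|_{L^2}$ and $\|\rho\|_{L^m}$ nonzero and finite, which is immediate for any nontrivial smooth compactly supported probability density. The argument is in the same spirit as Case II of Proposition~\ref{prop: infimum free energy}, but simpler, since in the supercritical regime no use of the Gagliardo--Nirenberg inequality or of a threshold on $\chi$ is needed: the scaling alone produces the blow-up for every $\chi>0$.
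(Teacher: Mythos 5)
Your proof is correct and follows essentially the same route as the paper: both use the mass-preserving dilation $\rho_\lambda(x)=\lambda^d\rho(\lambda x)$, compute the scaling of the two terms, and observe that $d(m-1)>d+2$ precisely when $m>m_c$, so the negative term dominates as $\lambda\to+\infty$. The only cosmetic difference is that the paper factors out $\lambda^{d+2}$ before taking the limit, whereas you compare the exponents directly.
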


\begin{proof}
    Given $\rho \in \mathcal{Y}$, we define $\rho_{\lambda}(x): = \lambda^d \rho (\lambda x )$, for any $x\in\Rd$. Note that $\rho_\lambda\in\mathcal{Y}$. Then, we have
    \begin{align*}
        \mf_m [\rho_{\lambda}] & = \frac{\lambda^{d+2}}{2} \| \nabla \rho \|_{L^2 (\Rd )}^2 - \frac{\chi\lambda^{d(m-1)}}{m-1} \| \rho \|_{L^m(\Rd)}^m
        \\
& = \lambda^{d+2} \left[ \mf_m [ \rho] - \frac{\chi \lambda^{d(m-m_c)} - 1 }{m-1} \| \rho \|_{L^m(\Rd)}^m \right].
    \end{align*}
Let us note that for any $\lambda$ big enough 
    \begin{equation*}
        \mf_m [ \rho] - \frac{ \chi\lambda^{d(m-m_c)} - 1 }{m-1} \| \rho \|_{L^m(\Rd)}^m < 0\,.
    \end{equation*}
    Therefore, by letting $\lambda \rightarrow +\infty$, $\mf_m [ \rho_{\lambda}] \rightarrow - \infty$, obtaining the desired result.    
\end{proof}

Finally, we briefly discuss on finite-time blow-up of classical solutions for the supercritical regimes. This shows that our main global in time existence results in Theorem \ref{thm:main_result_existence} for \eqref{eq:thin_film_intro} are sharp. Our arguments are based on the computation for the evolution of the second-order moment $m_2(\rho)$ as classically done in Keller-Segel models \cite{DP04,BDP06,blanchet2009critical,Calvez_Carrillo_Hoffmann17b,liu2017generalized}. We assume the solutions are classical solutions such that the following computations using integration by parts are allowed. More precisely, one can find that
\begin{align}
    \frac{\mathrm{d}}{\dt}\mathrm{m}_2(\rho) 
    & = 2\int_\Rd x \cdot\left(\rho \nabla(\Delta \rho))+\chi\nabla \rho^{m}\right)\,\dx \nonumber
    \\ 
    & = -2d\int_\Rd\rho\Delta\rho\,\dx - 2\int_\Rd(x\cdot\nabla\rho)\Delta\rho\,\dx - 2d\chi\int_\Rd\rho^m\,\dx \nonumber
    \\
    & = (d+2)\int_{\Rd}|\nabla\rho|^2\,\dx -2d\chi\int_\Rd\rho^m\,\dx \nonumber
    \\
    & = 2(d+2)\left[\mf_m[\rho] - \chi\left(\frac{1}{m_c-1}-\frac{1}{m-1}\right)\Vert\rho\Vert^m_{L^m(\Rd)}\right], \label{eq: evolution second moment}
\end{align}
where we used that
\begin{align*}
    \int_\Rd(x\cdot\nabla\rho)\Delta\rho\,\dx &= -\int_\Rd \nabla(x\cdot\nabla\rho)\cdot\nabla\rho\,\dx 
    \\ 
    &= -\int_\Rd|\nabla\rho|^2\,\dx - \int_\Rd x\cdot D^2\rho\nabla\rho\,\dx
    \\
    & = -\int_\Rd|\nabla\rho|^2\,\dx - \frac{1}{2}\int_\Rd x\cdot\nabla|\nabla\rho|^2\,\dx
    \\
    & = \left(\frac{d}{2}-1\right)\int_\Rd|\nabla\rho|^2\,\dx\,.
\end{align*}
We observe that this computation could be made rigorous for the solutions constructed in Theorem \ref{thm:main_result_existence} by using the flow interchange technique with a suitable auxiliary flow \cite{MMS09}, in the same spirit as in Proposition \ref{H2 bound flow interchange}. A short-time existence of solutions in the super critical exponent is expected as in \cite{CS18} but it is not a trivial question for the variational scheme below.

Note that for the critical case $m = m_c$, \eqref{eq: evolution second moment} reduces to
\begin{align*}
      \frac{\mathrm{d}}{\dt}\mathrm{m}_2(\rho) =  2(d+2)\mf_{m_c}[\rho]\,.
\end{align*}
In particular, by using Proposition \ref{prop: infimum free energy} we obtain that the second moment is non-decreasing in time for the subcritical and critical mass regimes, $\chi\le\chi_c$. In the supercritical mass regime, by using the above equation and that free energy $\mf_{m_c}[\rho]$ is unbounded from below, see Proposition \ref{prop: infimum free energy}, the authors of \cite{liu2017generalized} are able to show that any solution to \eqref{eq:thin_film_intro} with an initial datum $\rho_0$ satisfying $\mf[\rho_0]<0$, has a finite-time blow-up in the $L^{m_c}$-norm.

\begin{figure}[h!]
    \centering
\begin{subfigure}{\textwidth}
\centering\includegraphics[width=0.85\textwidth]{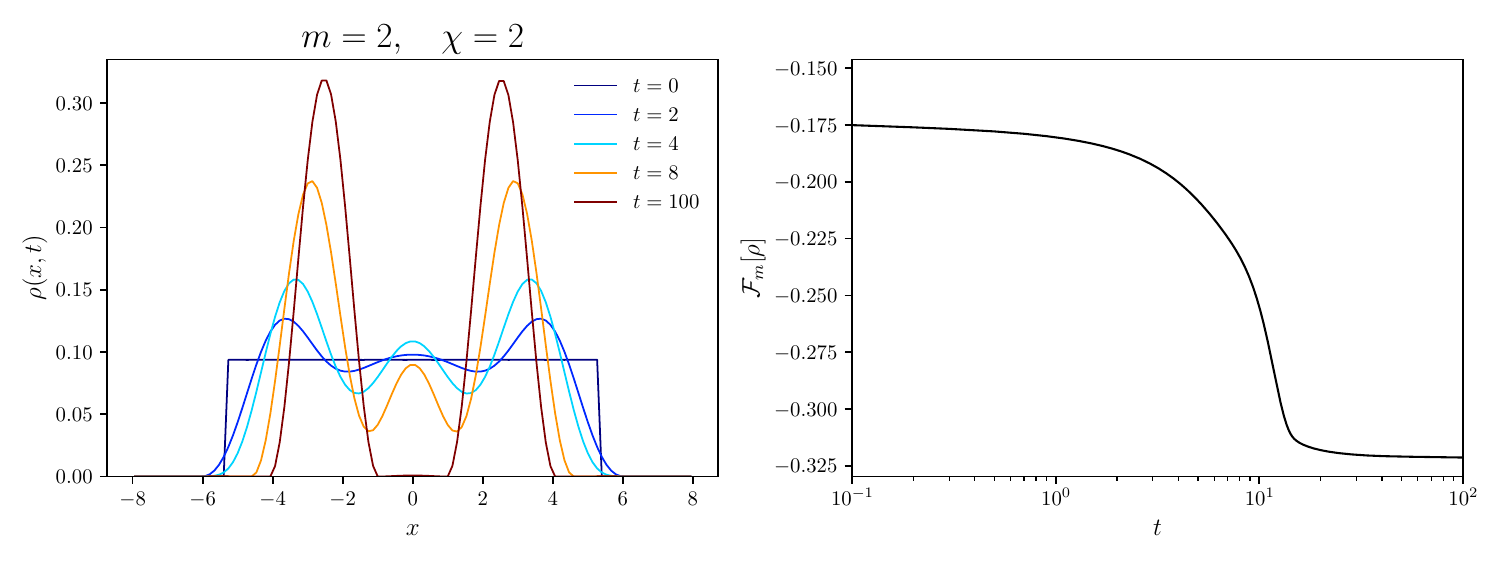}
    \caption{Subcritical exponent $m<m_c$.}
    \label{fig:first}
\end{subfigure}
\begin{subfigure}{\textwidth}
    \centering
    \includegraphics[width=0.85\textwidth]{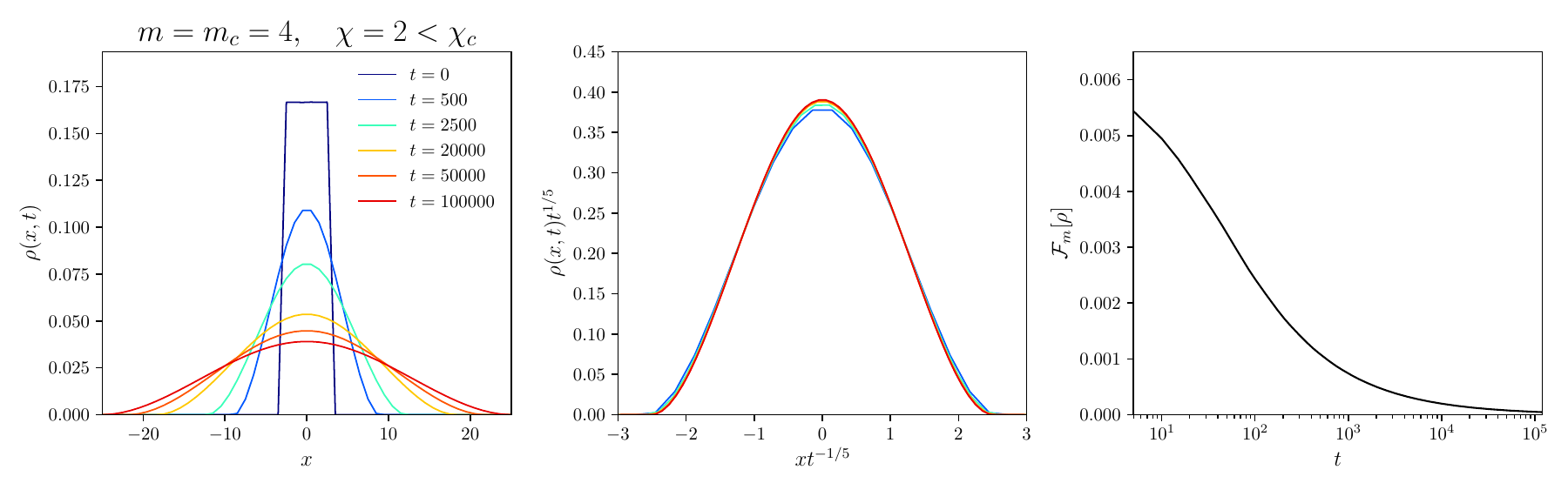}
    \caption{Critical exponent $m = m_c$, subcritical mass $\chi <\chi_c$.}
    \label{fig:second}
\end{subfigure}
\begin{subfigure}{\textwidth}
    \centering
\includegraphics[width=0.85\textwidth]{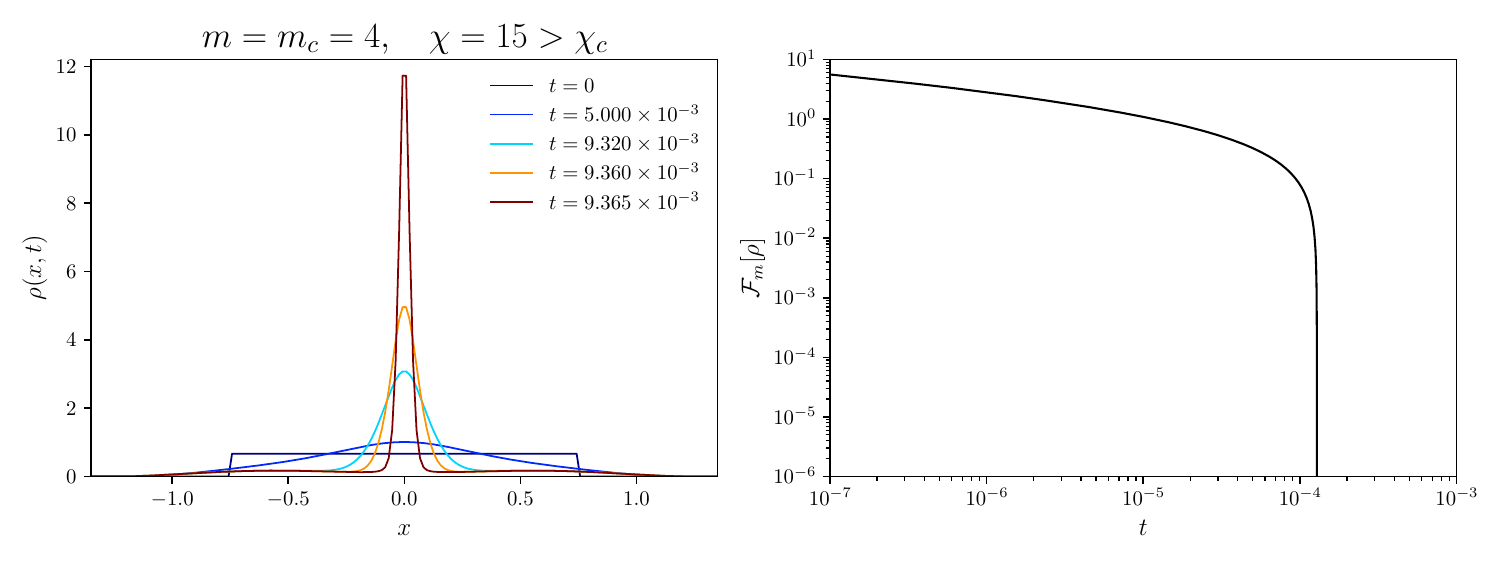}
    \caption{Critical exponent $m=m_c$, supercritical mass $\chi > \chi_c$.}
    \label{fig:third}
\end{subfigure}
\begin{subfigure}{\textwidth}
    \centering
\includegraphics[width=0.85\textwidth]{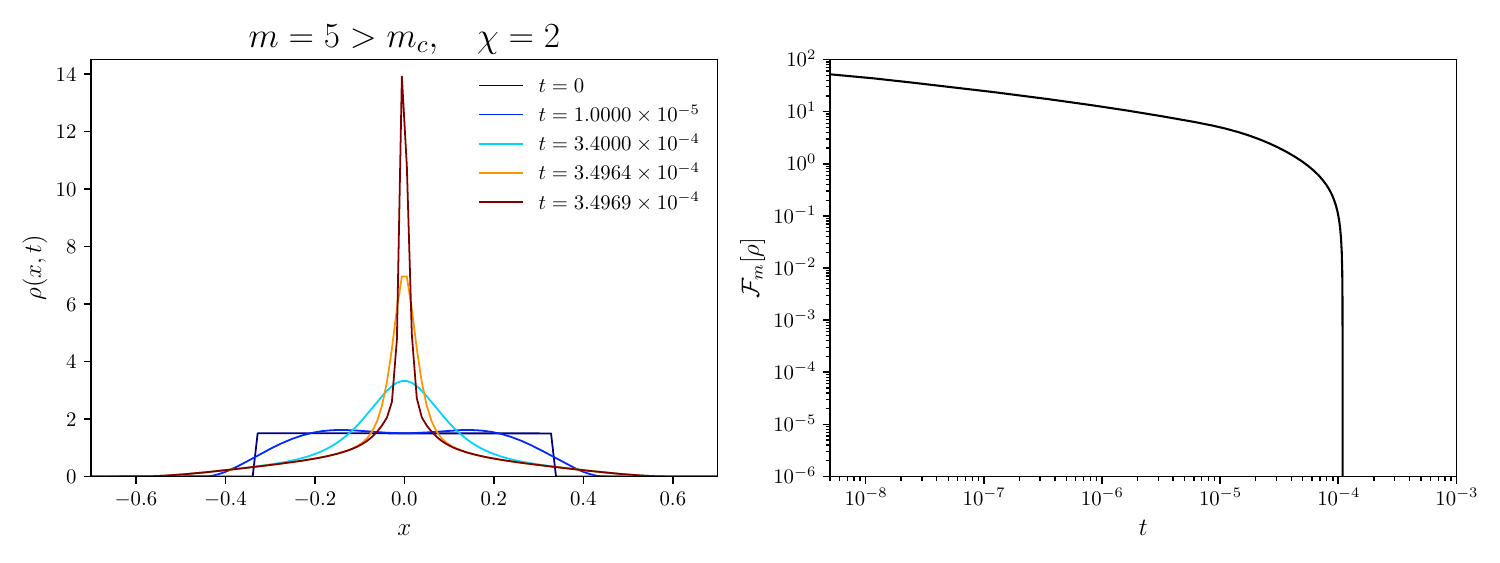}
    \caption{Supercritical exponent $m > m_c$.}
    \label{fig:fourth}
\end{subfigure}
\caption{Numerical solutions to \eqref{eq:thin_film_intro} in one spatial dimension for different values of $m$ and $\chi$, and decay of the free energy $\mf_m[\rho]$ as a function of time. }
\label{fig:figures}
\end{figure}

A similar argument also works in the supercritical exponent case. If $m>m_c$ then 
\begin{align*}
        \frac{\mathrm{d}}{\dt}\mathrm{m}_2(\rho) \leq  2(d+2)\mf_{m}[\rho]\leq 2(d+2)\mf_{m}[\rho_0]<0\,,
\end{align*}
for an initial datum with $\mf_m[\rho_0]<0$, which can be chosen by Proposition \ref{prop: free energy supercritical}. If the initial second moment is finite, then there exists some time $t^*>0$ such that $\mathrm{m}_2(\rho(t^*)) = 0$, implying that such solutions can only exist locally in time.

Our main results are also summarised in Figure~\ref{fig:figures}, where we plot numerical solutions to~\eqref{eq:thin_film_intro} in one spatial dimension and for different values of the exponent $m$ and the mass parameter $\chi$. These are based on the finite-volume scheme presented in \cite{bailo2021unconditional}. In particular, we observe that for subcritical exponent, solutions evolve towards a compactly supported steady state while the free energy stays bounded from below. For critical exponent with subcritical mass, we also notice that the free energy is bounded by zero from below, but in this case solutions tend to the self-similar profile mentioned in the previous sections. By plotting the solution in self-similar variables, this scaling is numerically verified. Finally, we observe finite-time blow-up for critical exponent with supercritical mass, and for supercritical exponent. In both cases, the free energy is unbounded from below.


\section{Existence of weak solutions via the JKO scheme}\label{sec:existence_one_species}

Once we understood the properties of the free energy \eqref{eq:functional} we study existence of weak solutions of~\eqref{eq:thin_film_intro}. The variational structure of Eq.~\eqref{eq:thin_film_intro} allows to construct a candidate approximate solutions by means of the so-called JKO scheme or minimising movement, cf.~\cite{JKO98,AGS}. For a fixed $\tau>0$, we define the following sequence recursively
\begin{equation}\label{eq:JKO}
\begin{split}
        \rho^0_{\tau} &:= \rho_0,\\
        \rho^{k+1}_{\tau}  &\in \argmin_{\rho \in \mathcal{P} (\mathbb{R}^d)} \left\lbrace \frac{\mathcal{W}_2^2(\rho, \rho^k_{\tau})}{2 \tau} + \mf_m[\rho] \right\rbrace,\, \mbox{given } \rhotk,\; k\ge0.
\end{split}
\end{equation}

First, we prove the above scheme is well-defined, which is not immediate due to the negative component in the energy functional, or destabilising term. Let us fix $\overline{\rho} \in \mathcal{P}_2^a(\mathbb{R}^d)$ and define the functional
\begin{align*}
	\begin{split}
		\JKOstep_m\colon& \mathcal{P} (\mathbb{R}^d)   \longrightarrow  \overline{\mathbb{R}} \\
		& \quad \rho \quad \mapsto \, \, \frac{\mathcal{W}_2^2(\rho, \overline{\rho})}{2 \tau} + \mf_m[\rho].
	\end{split}
\end{align*}
\begin{prop}\label{prop:Existence minimiser}
    Let $\overline{\rho} \in \mathcal{P}_2^a (\mathbb{R}^d)$ and $1\leq m<2+\frac{2}{d}$ or $m = 2+\frac{2}{d}$ with subcritical mass, i.e. $\chi<\chi_c$. The functional $\JKOstep_m$ admits minimisers in the set $\{\rho\in\mP^a(\Rd) : \nabla\rho\in L^2(\Rd)\}$. Moreover, $\rho \in L^p (\Rd )$ for $p\in[1,2^{\ast}]$.
\end{prop}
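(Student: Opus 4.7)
The plan is to apply the direct method of the calculus of variations in $\mpta(\Rd)$. First I would check that $\JKOstep_m$ is bounded from below: the transport term is non-negative, and by Proposition~\ref{prop:Some properties}(1) (subcritical exponent) or Proposition~\ref{prop:prop_energies_critical} (critical exponent with $\chi<\chi_c$), the free energy $\mf_m$ is uniformly bounded from below in terms of the total mass, which is fixed to one. Hence $\inf_{\mP(\Rd)} \JKOstep_m > -\infty$, and I may pick a minimising sequence $\{\rho_n\}_{n\in\mathbb{N}} \subset \mP(\Rd)$ with $\JKOstep_m[\rho_n]\to \inf \JKOstep_m$; in particular $\mf_m[\rho_n]<+\infty$ along the sequence.

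Next I would extract compactness from the uniform bound $\JKOstep_m[\rho_n]\le C$. Since the transport term is bounded, $\mw_2^2(\rho_n,\overline{\rho})$ is bounded; combined with $\overline{\rho}\in\mpdtard$ this gives a uniform bound on second moments, hence tightness. Since $\mf_m[\rho_n]$ is uniformly bounded above, Proposition~\ref{prop:Some properties}(2), in the subcritical exponent case, and the first estimate in Proposition~\ref{prop:prop_energies_critical}, in the critical exponent case, yield a uniform bound on $\|\nabla\rho_n\|_{L^2(\Rd)}$; it is precisely here that the strict subcriticality $\chi<\chi_c$ is used, to guarantee a strictly positive coefficient in front of $\|\nabla\rho\|_{L^2}^2$ in Proposition~\ref{prop:prop_energies_critical}. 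Gagliardo--Nirenberg together with $\|\rho_n\|_{L^1(\Rd)}=1$ then gives a uniform bound on $\|\rho_n\|_{L^p(\Rd)}$ for every $p\in[1,2^{\ast}]$. Up to extraction, I can therefore assume $\rho_n\rightharpoonup\rho$ weakly in $H^1(\Rd)$, weakly in $L^p(\Rd)$ for $p\in(1,2^{\ast}]$, and narrowly, with $\rho\in\mpdtard$ of unit mass.

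For the lower semicontinuity of $\JKOstep_m$ along this subsequence, the transport term $\mw_2^2(\,\cdot\,,\overline{\rho})$ is lower semicontinuous under narrow convergence with uniformly bounded second moments, and the Dirichlet term $\|\nabla\,\cdot\,\|_{L^2}^2$ is lower semicontinuous under weak $L^2$ convergence. The main obstacle is the negative aggregation term $-\chi\me_m[\rho_n]$: since it enters with a destabilising sign I need \emph{continuity} under the limit, not merely lower semicontinuity. For $m>1$ I would upgrade weak convergence to strong convergence in $L^m(\Rd)$ as follows: on each ball $B_R$, the compact Sobolev embedding $H^1(B_R)\hookrightarrow\hookrightarrow L^m(B_R)$, which is available since $m\le 2+2/d<2^{\ast}$, yields $\rho_n\to\rho$ in $L^m(B_R)$; the uniform second-moment bound controls the tails, and the uniform $L^p$ bound for some $p>m$ provides equi-integrability, so Vitali's theorem gives $\rho_n\to\rho$ strongly in $L^m(\Rd)$, hence $\me_m[\rho_n]\to\me_m[\rho]$. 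For the linear-diffusion case $m=1$, I would instead invoke the standard lower semicontinuity of the Boltzmann entropy along narrowly convergent sequences with uniformly bounded second moments (cf.~Remark~\ref{rem:lower bound rho log rho}), which is exactly what is needed given the $+\me_1$ sign appearing in $\mf_1$.

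Combining these three ingredients gives $\JKOstep_m[\rho]\le\liminf_n \JKOstep_m[\rho_n]=\inf\JKOstep_m$, so $\rho$ is a minimiser. Finally, the $L^p$-regularity of $\rho$ for $p\in[1,2^{\ast}]$ is inherited from the $H^1$ bound via Proposition~\ref{prop:Some properties}(3), and absolute continuity follows from $\nabla\rho\in L^2(\Rd)$, concluding the proof.
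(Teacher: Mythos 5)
Your argument for $m>1$ is correct and takes a mildly different route from the paper for the key compactness step: you obtain strong $L^m(\Rd)$ convergence of the minimising sequence via Rellich--Kondrachov on balls plus tail control (second moments) and equi-integrability (the uniform $L^p$ bound for some $p>m$, available since $m\le 2+\tfrac2d<2^{\ast}$) and Vitali's theorem, whereas the paper proves relative compactness in $L^m(\Rd)$ of the whole sublevel set $\mathcal{H}_m$ directly via the Kolmogorov--Riesz--Fr\'echet theorem (uniform translation continuity from the $H^1$ bound and $L^p$ interpolation, uniform integrability at infinity from the second moment). Both deliver exactly what is needed, namely continuity of $\me_m$ along the sequence so that the destabilising term $-\chi\me_m$ passes to the limit; your version is arguably more elementary, the paper's is more self-contained for reuse (the same compactness is invoked again in Proposition~\ref{prop: strong convergence rho}).

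However, your treatment of the case $m=1$ contains a genuine sign error. In $\mf_1$ the Boltzmann entropy enters as $-\chi\me_1[\rho]=-\chi\int\rho\log\rho$ with $\chi>0$, not with a plus sign. Lower semicontinuity of $\me_1$ under narrow convergence therefore gives $\liminf_n\me_1[\rho_n]\ge\me_1[\rho]$, which yields \emph{upper} semicontinuity of the term $-\chi\me_1$ --- precisely the wrong direction for the direct method (this is the content of Remark~\ref{rem:no weak l.s.c.}). What you actually need is $\limsup_n\me_1[\rho_n]\le\me_1[\rho]$, i.e.\ upper semicontinuity of the entropy, which fails in general under narrow convergence. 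The paper closes this gap by observing $\mf_1\ge\mf_2$, so the sequence lies in $\mathcal{H}_2$ and converges strongly in $L^2(\Rd)$; then, using the pointwise majorant $|\rho_n\log\rho_n|\le\rho_n^2+\rho_n^{1/2}$ together with $\int\rho_n^{1/2}\to\int\rho^{1/2}$ (Fatou plus concavity), a generalised dominated convergence theorem gives $\rho_n\log\rho_n\to\rho\log\rho$ in $L^1(\Rd)$, hence actual convergence of $\me_1[\rho_n]$. Your proof needs an argument of this type for $m=1$; as written, that case does not go through.
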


Existence of minimisers is based on the \textit{direct method of calculus of variations}, as we prove below.

\begin{rem}\label{rem:no weak l.s.c.} 
Note that $\mathcal{W}^2_2(\rho,\overline{\rho})$ and $\|\nabla\rho\|^2_{L^2(\mathbb{R}^d)}$ are lower semicontinuous with respect to weak convergence in $\mathcal{P}(\mathbb{R}^d)$ and $L^2(\mathbb{R}^d)$, respectively. However, the negative terms in the free energy,     
    \begin{equation*}
        -\frac{1}{m-1} \| \rho \|_{L^m(\mathbb{R}^d)}^m \qquad \text{and} \qquad - \int_{\Rd} \rho \log \rho \, dx,
    \end{equation*} 
    are both upper (and not lower) semicontinuous with respect to the weak convergence in $L^m(\mathbb{R}^d)$. In particular, our functional $\JKOstep_m$ cannot be weakly lower semicontinuous.
\end{rem}
\begin{proof}[Proof \Cref{prop:Existence minimiser}] We split the proof into three parts.

    \textbf{Step 1:} Boundedness from below and minimising sequence. Taking into account the definition of the free energy functional \eqref{eq:functional}, we look for minimisers $\rho\in\mathcal{P}^a(\mathbb{R}^d)$ such that $\nabla\rho\in L^2(\mathbb{R}^d)$, otherwise the functional is infinite. Due to \eqref{eq:Free energy bounded below} we have the bound from below
    \begin{equation}\label{eq:a.v. mf[rho] bounded}
        \mf_m[\rho] \geq C,
    \end{equation}
    which implies $\JKOstep_m [\rho] \ge C$.
Boundedness from below ensures we can consider a minimising sequence, $\rho_n$, for which we also know $\mathcal{A}_m\le C$. Since the functional  $\mathcal{F}_m$ is bounded from below, we obtain the bound for the second order moment
\begin{align}\label{eq:second moment bdd}
    \begin{split}
        \mathrm{m}_2 (\rho_n) & \leq 2 \mathcal{W}_2^2 (\rho_n , \overline{\rho} ) + 2 \mathrm{m}_2 (\overline{\rho}) = 4 \tau \JKOstep_m [\rho_n] - 4 \tau \mf_m[\rho_n] + 2 \mathrm{m}_2 (\overline{\rho}) \\
        & \leq C T \left(  1 + \mathrm{m}_2 (\overline{\rho}) \right), 
    \end{split}
    \end{align}
for a different constant $C$.
    
    \textbf{Step 2:} Lower semicontinuity and compactness. First we comment on the lower semicontinuity of $\JKOstep_m$ with respect to a suitable convergence, i.e.
    \begin{equation*}
        \liminf_{n \to\infty} \JKOstep_m [\rho_n] \geq \JKOstep_m [\rho].
    \end{equation*}
From \Cref{rem:no weak l.s.c.} we infer we cannot have lower semicontinuity with respect to weak convergence in all terms, but we have it with respect to the convergence
\begin{equation*}
    \nabla \rho_n \rightharpoonup \nabla \rho \quad  \text{in } L^2(\mathbb{R}^d),
\end{equation*}
\vspace{-.4cm}
    \begin{align*} 
        \begin{cases}\rho_n \rightarrow \rho \quad  \text{in } L^m (\mathbb{R}^d) & \text{if } 1 <m \le 2+ \frac{2}{d}, \\
        \rho_n \log \rho_n \rightarrow \rho \log \rho \quad  \text{in } L^1 (\mathbb{R}^d) &\text{if } m =1.
        \end{cases}
    \end{align*}
    Let us note that \eqref{eq:grad rho is L2}  combined with \eqref{eq:a.v. mf[rho] bounded} implies that $\rho_n$ and $\nabla \rho_n$ are uniformly bounded on $L^m (\mathbb{R}^d)$ and $L^2 (\mathbb{R}^d)$, respectively, as $\rho_n$ is a minimising sequence.
    
    \textbf{Step 2a:} Strong $L^m$ convergence of $\rho_n$. 
    If $1 < m < 2 + \frac{2}{d}$  or $m = 2+\frac{2}{d}$ with $\| \rho_0 \|_{L^1 (\mathbb{R}^d)} < M_c$, since 
    \begin{equation}\label{eq:Lm bdd}
        \| \rho_n \|_{L^m(\mathbb{R}^d)} \leq C,
    \end{equation}
    by Banach-Alaoglu Theorem, up to pass to a subsequence,
    \begin{equation}\label{eq:weak convergence Lm}
        \rho_n \rightharpoonup \rho \quad  \text{weakly in } L^m(\mathbb{R}^d).
    \end{equation}
    Taking into account \eqref{eq:a.v. mf[rho] bounded}, \eqref{eq:second moment bdd}, and \eqref{eq:Lm bdd}, 
    we can restrict to the set
    \begin{equation*}
        \mathcal{H}_m := \left\lbrace f \in \mathcal{P}^a(\mathbb{R}^d ) : \mathrm{m}_2(f), \, \| \nabla f \|_{L^2 (\mathbb{R}^d)}, \, |\mf_m[f]| \leq C \right\rbrace.
    \end{equation*}
    Furthermore, from \eqref{eq:control of the Lp norm}, if $f \in \mathcal{H}_m$, then
    \begin{equation}\label{eq:nabla f is unif bdd L2}
        \| f \|_{L^p (\mathbb{R}^d)} \leq C,
    \end{equation}
    for all $p\in[1,2^{\ast}]$.
    
    Next, we prove that $\mathcal{H}_m$ is relatively compact in $L^m(\mathbb{R}^d)$ by means of Kolmogorov--Riesz--Fréchet Theorem \cite[Corollary 4.27]{Bre11}. In particular, we first show the uniform continuity estimate: $\|f(\cdot+h)-f(\cdot)\|_{L^m(\mathbb{R}^d)}\rightarrow 0$ as $|h|\rightarrow 0^+$. We distinguish two cases: $m=2$ and $m \neq 2$. 

    \textit{Case I: $m=2$.} Let us take
    \begin{align*}
        \int_{\mathbb{R}^d} | f (x+h)-f(x)|^2 \dx & = \int_{\mathbb{R}^d} \left| \int_0^1 \frac{\mathrm{d} }{\mathrm{d}s} \left(f (x+ s h) \right) \, \mathrm{d}s    \right|^2 \, \dx \\
        & = |h|^2\int_{\mathbb{R}^d} \left| \int_0^1 \nabla f (x+ s h)  \, \mathrm{d}s    \right|^2 \, \dx \\
        & \leq |h|^2 \int_{\mathbb{R}^d}  \int_0^1 \left| \nabla f (x+ s h) \right|^2 \, \mathrm{d}s  \, \dx \rightarrow 0,
    \end{align*}
    since $\| \nabla f \|_{L^2 (\mathbb{R}^d)} \leq C$ for every $f \in \mathcal{H}_m$.

    \textit{Case II: $m \neq 2$.} We use $L^p$ interpolation and apply \textit{Case I} afterwards. If $1 < m< 2$,
    \begin{align*}
        \| f(\cdot+h)-f(\cdot) \|_{L^m (\mathbb{R}^d)} & \leq \| f(\cdot+h)-f(\cdot) \|_{L^1 (\mathbb{R}^d)}^{\frac{2-m}{m}} \|f(\cdot+h)-f(\cdot) \|_{L^2 (\mathbb{R}^d)}^{\frac{2m-2}{m}} \\
        & \leq \left(2 \| f \|_{L^1 (\Rd)} \right)^{\frac{2-m}{m}} \|f(\cdot+h)-f(\cdot) \|_{L^2 (\mathbb{R}^d)}^{\frac{2m-2}{m}} \rightarrow 0.
    \end{align*}
    If $2 < m < 2 + \frac{2}{d}$ or $m = 2+\frac{2}{d}$ with subcritical mass, by using a different $L^p$ interpolation we obtain
     \begin{align*}
        \| f(\cdot+h)-f(\cdot) \|_{L^m (\mathbb{R}^d)} & \leq \| f(\cdot+h)-f(\cdot) \|_{L^{2(m-1)} (\mathbb{R}^d)}^{\frac{m-1}{m}} \|f(\cdot+h)-f(\cdot) \|_{L^2 (\mathbb{R}^d)}^{\frac{1}{m}} \\
        & \leq \left(2 \| f \|_{L^{2(m-1)} (\mathbb{R}^d)} \right)^{\frac{m-1}{m}} \|f(\cdot+h)-f(\cdot) \|_{L^2 (\mathbb{R}^d)}^{\frac{1}{m}} \rightarrow 0,
    \end{align*}
    and the convergence follows from \eqref{eq:control of the Lp norm} given that $2(m-1)<2^{\ast}$.

    In order to prove uniform integrability at infinity we first use Holder's inequality to show that 
    \begin{equation*}
        \int_{\mathbb{R}^d \backslash B_R} f(x)^m\,\dx \leq \frac{1}{R^{2 \delta}} \left( \int_{\mathbb{R}^d 
        } |x|^2 f (x)\,\dx\right)^{\delta} \left( \int_{\mathbb{R}^d
        } f(x)^{\frac{m- \delta}{1 - \delta}}\,\dx \right)^{1-\delta}.
    \end{equation*}
    Now $\delta\in\left(0,1\right)$ can be chosen so that the exponent $p:=\frac{m-\delta}{1-\delta}$ satisfies $p\in[1,2^{\ast}]$. Hence, by  \eqref{eq:control of the Lp norm}, $\|f\|_{L^p(\mathbb{R}^d)}$ is uniformly bounded.
   In particular, by taking the $R\rightarrow+\infty$ limit, and using that $f\in \mathcal{H}_m$ has uniformly bounded second moments we obtain the uniform integrability at infinity.
    
    Then, $\mathcal{H}_m$ is relatively compact in $L^m (\mathbb{R}^d)$ and combining it with \eqref{eq:weak convergence Lm} we obtain,
\begin{equation}\label{eq:strong convergence Lm}
        \rho_n \rightarrow \rho \quad  \text{in } L^m (\mathbb{R}^d).
    \end{equation}
    
    If $m=1$, since $\mf_1[\rho] \geq \mf_2[\rho]$ we have that $\mathcal{H}_1 \subseteq \mathcal{H}_2$. From here, we recover \eqref{eq:nabla f is unif bdd L2}, and \eqref{eq:strong convergence Lm} for $m=2$. We show $\rho_n\log\rho_n\to\rho\log\rho$ in $L^1(\Rd)$ via an extended version of Lebesgue's Dominated Convergence Theorem \cite[Chapter~4, Theorem~17]{Roy88}. Note that strong convergence in $L^2(\Rd)$ implies that, up to a subsequence,
    \begin{equation*}
        \rho_n \log \rho_n \rightarrow \rho \log \rho \quad \text{a.e. in } x\in\Rd.
    \end{equation*}
Furthermore, it is easy to check the majorant $|\rho_n(x) \log \rho_n(x)| \leq \rho_n^2(x) + \rho_n^{\frac{1}{2}}(x)$, for any $x\in\Rd$.
    We claim that $\rho_n^2 + \rho_n^{\frac{1}{2}} \rightarrow \rho^2 + \rho^{\frac{1}{2}}$ strongly in $L^1 (\Rd)$. Since $\mathcal{H}_1 \subseteq \mathcal{H}_2$ it is enough to show $ \rho_n^{\frac{1}{2}} \rightarrow \rho^{\frac{1}{2}}$ strongly in $L^1 (\Rd)$.
    Applying Jensen's inequality for concave functions we have $\rho_n^{\frac{1}{2}}\in L^1(\Rd)$, while continuity of the square root function ensures
    \begin{equation*}
        \rho_n^{\frac{1}{2}}(x) \rightarrow \rho^{\frac{1}{2}}(x) \quad \text{a.e. in } x\in\Rd.
    \end{equation*}
    By applying Fatou's Lemma,
    \begin{equation}\label{eq:lim inf 1/2}
        \liminf_{n \rightarrow \infty} \int_{\Rd} \rho_n^{\frac{1}{2}} \, dx \geq \int_{\Rd} \rho^{\frac{1}{2}} \, \dx\,,
    \end{equation}
    and concavity implies,
    \begin{equation}\label{eq:lim sup 1/2}
        \limsup_{n \rightarrow \infty} \int_{\Rd} \rho_n^{\frac{1}{2}} \, \dx \leq \int_{\Rd} \rho^{\frac{1}{2}} \, \dx.
    \end{equation}
    Combining \eqref{eq:lim inf 1/2} and \eqref{eq:lim sup 1/2} we infer
    \begin{equation*}
        \lim_{n \rightarrow \infty} \int_{\Rd} \rho_n^{\frac{1}{2}} \, dx = \int_{\Rd} \rho^{\frac{1}{2}} \, \dx.
    \end{equation*}
Applying the extended Dominated Convergence Theorem we obtain
    \begin{equation*}
        \rho_n \log \rho_n \rightarrow \rho \log \rho \quad \text{in } L^1 (\Rd ).
    \end{equation*}

    \textbf{Step 2b:} Weak $L^2$ convergence of $\nabla \rho_n$.
    Given that $\nabla\rho$ is bounded in $L^2(\Rd)$, from Banach-Alaoglu Theorem we obtain that up to a subsequence,
    \begin{equation*}
        \nabla \rho_n \rightharpoonup \nabla \rho \quad  \text{weakly in } L^2(\mathbb{R}^d).
    \end{equation*}
    Note that the limit is $\nabla \rho$, which can be checked by testing $\nabla\rho$ against a smooth and compactly supported test function, and using the convergence $\rho_n\rightarrow \rho$ in $L^m(\mathbb{R}^d)$ that we proved in the previous step.
    
    \textbf{Step 3:} Existence of minimisers. Due to the Weierstrass criterion for the existence of minimisers, cf.~e.g.~\cite[Box 1.1]{San15}, $\JKOstep_m$ has at least one minimiser in $\mathcal{H}_m$.
\end{proof}

As mentioned in~Section~\ref{sec:self_similarity}, the proof of~Proposition~\ref{prop:minimisers_fokker_planck} can be obtained by adapting the previous one to the functional $\mathcal{L}:\mP^a(\Rd)\to\R$ given by
\[
\mathcal{L}[u]=\mathcal{F}_{m_c}[u]+\frac{b}{2}\int|z|^2u(z)\,\mathrm{d}z.
\]
\begin{proof}[Proof of Proposition~\ref{prop:minimisers_fokker_planck}]
    Boundedness from below follows from Gagliardo--Nirenberg inequality and non-negativity of the additional term in $\mathcal{L}[u]$, as noted in~\Cref{prop:prop_energies_critical}. For a minimising sequence $\{u_n\}_{n\in \mathbb{N}}$, since $\chi<\chi_c$ we derive the following bounds, again as a consequence of Gagliardo--Nirenberg, cf.~Proposition~\ref{prop:prop_energies_critical}:
\begin{align*}
    \|u_n\|_{L^p(\Rd)}\le C,\quad \|\nabla u_n\|_{L^2(\Rd)}\le C,\quad \mathrm{m}_2(u_n)\le C,
\end{align*}
for $p\in[1,2^*]$ and a constant $C=C(m_c,p,d,\chi)>0$. Kolmogorov--Riesz--Fréchet Theorem provides relatively compactness in $L^{m_c}(\Rd)$ for the set $\{f\in \mP^a(\Rd):  \mathrm{m}_2(f), \|\nabla f\|_{L^2(\Rd)}, |\mathcal{L}[\rho]|\le C \}$, arguing as in~\Cref{prop:Existence minimiser}. For the sake of completeness, we point out the additional term is lower semicontinuous with respect to the weak-$L^2$ convergence by applying a cut-off and monotone convergence Theorem --- choosing $p=2$ we infer weak-$L^2$ convergence of $u_n$ from the above uniform bounds. Proceeding as in~Proposition~\ref{prop:Existence minimiser}, and for $\chi<\chi_c$, we can show existence of minimisers in $\{u\in \mP^a(\Rd): \nabla u\in L^2(\Rd),  \mathrm{m}_2(u)<+\infty\}$. 
\end{proof}

Proposition \ref{prop:Existence minimiser} guarantees the sequence is well-defined, as we can solve the minimisation problem in \eqref{eq:JKO}. Next, we set up the approximating solution to \eqref{eq:thin_film_intro}. Let $T>0$, and consider $N:=\left[\frac{T}{\tau}\right]$. We define the curve $\rhot: [0, T ] \rightarrow \mathcal{P} (\mathbb{R}^d)$ as the piecewise constant interpolation
\begin{equation}\label{eq:piecewise interpolation}
    \rhot(t) := \rhot^k,\quad t\in \left((k-1) \tau , k \tau \right],
\end{equation}
where $\rhot^k$ is defined in $\eqref{eq:JKO}$. We can prove convergence of this piecewise interpolation to a continuous curve with respect to the $2$-Wasserstein distance.

\begin{lem}[Narrow convergence \&  discrete uniform estimates]\label{lem: interpolation narrow convergence}
  Let $\rho_0\in\mathcal{P}_2^a(\Rd)$ such that $\mf_m [\rho_0]<+\infty$ and $1\leq m<2+\frac{2}{d}$ or $m = 2+\frac{2}{d}$ with subcritical mass $\chi<\chi_c$. There exists an absolutely continuous curve $\tilde{\rho} : [0,T] \rightarrow \mathcal{P}_2 (\mathbb{R}^d)$
    such that, up to a subsequence, $\rhot(t)$ narrowly converges to $\Tilde{\rho}(t)$, uniformly in $t \in [0,T]$.

\noindent Moreover, we obtain the following discrete uniform bounds:
    \begin{align}
   \sup_k\Vert\nabla\rhot^k\Vert_{L^2(\Rd)} &\leq C_1\left(\mf_m[\rho_0] + \Vert\rho_0\Vert_{L^1(\Rd)}^\alpha\right)^{1/2}<+\infty;
   \label{eq: discrete H1}
\\
        \sup_k\Vert\rhot^k\Vert_{L^p(\Rd)} &\leq C_2 < +\infty;
        \label{eq: discrete Lp}
\\
        \mathrm{m}_2(\rhot^k)&\leq 2\mathrm{m}_2(\rho_0)+4T\left(\mf_m[\rho_0]+C\right),
         \label{eq:uniform second order moment bound}
    \end{align}
     for constants $C_1 = C_1(m,d,\chi)>0$ and $C_2 = C_2(m,p,d,\rho_0,\chi)>0$, and for $p\in[1,2^{\ast}]$.
\end{lem}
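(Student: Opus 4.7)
The plan is to exploit the minimality encoded in \eqref{eq:JKO} to derive the three discrete estimates, and then pass from these to uniform narrow convergence via a refined Ascoli--Arzelà argument in $(\mptrd,\mw_2)$. The starting point is the standard JKO comparison inequality obtained by testing $\rhotk$ itself as a competitor in the minimisation defining $\rhotkk$, yielding
\[
\frac{\mw_2^2(\rhotkk,\rhotk)}{2\tau} + \mf_m[\rhotkk] \le \mf_m[\rhotk],
\]
and in particular the monotonicity $\mf_m[\rhotk] \le \mf_m[\rho_0]$ for every $k$. Since all $\rhotk$ are probability measures ($\|\rhotk\|_{L^1(\Rd)}=1$), inserting this into \eqref{eq:grad rho is L2} and \eqref{eq:control of the Lp norm} of \Cref{prop:Some properties} delivers the $H^1$-bound \eqref{eq: discrete H1} and the $L^p$-bound \eqref{eq: discrete Lp} at once; in the critical case $m=m_c$ with $\chi<\chi_c$ one uses instead \Cref{prop:prop_energies_critical}.

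Next I would sum the JKO inequality telescopically and combine it with the lower bound \eqref{eq:Free energy bounded below} (valid uniformly since mass equals one) to get
\[
\sum_{j=0}^{k-1} \mw_2^2(\rhot^{j+1},\rhot^j) \le 2\tau\bigl(\mf_m[\rho_0]-\mf_m[\rhotk]\bigr) \le 2\tau\bigl(\mf_m[\rho_0]+C\bigr).
\]
A discrete Cauchy--Schwarz then produces
\[
\mw_2^2(\rhotk,\rho_0) \le k\sum_{j=0}^{k-1}\mw_2^2(\rhot^{j+1},\rhot^j) \le 2k\tau\bigl(\mf_m[\rho_0]+C\bigr) \le 2T\bigl(\mf_m[\rho_0]+C\bigr),
\]
whence \eqref{eq:uniform second order moment bound} follows from $\mathrm{m}_2(\rhotk)\le 2\mw_2^2(\rhotk,\rho_0)+2\mathrm{m}_2(\rho_0)$. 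The same Cauchy--Schwarz trick applied between two indices $j\le k$, rather than to $\rho_0$, gives the approximate H\"older bound
\[
\mw_2(\rhot(t),\rhot(s)) \le \Bigl(\sum_{\ell=j}^{k-1}\mw_2^2(\rhot^{\ell+1},\rhot^\ell)\Bigr)^{1/2}\sqrt{k-j} \le C\sqrt{|t-s|+\tau},
\]
for $s\in((j-1)\tau,j\tau]$ and $t\in((k-1)\tau,k\tau]$, where the piecewise-constant jump contributes the $\sqrt\tau$ term.

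Finally, to extract a narrowly convergent subsequence uniformly in $t$, I would combine the uniform second-moment bound (which gives tightness of $\{\rhot(t)\}_{t\in[0,T],\,\tau>0}$ and hence narrow relative compactness of each time slice by Prokhorov) with the above shifted equi-continuity estimate, invoking the refined Ascoli--Arzelà principle in Wasserstein spaces, cf.~\cite[Proposition~3.3.1]{AGS}. The limit curve $\tilde\rho\in C^{1/2}([0,T];\mptrd)$ is then automatically absolutely continuous and narrowly continuous. The most delicate point, in my view, is precisely this last passage: one has to absorb the $\sqrt\tau$ defect coming from the piecewise-constant interpolation when passing to the diagonal subsequence, and verify that the bound on $\mathrm{m}_2$ is preserved in the limit so that $\tilde\rho(t)$ genuinely belongs to $\mptrd$ uniformly in $t$.
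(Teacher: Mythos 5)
Your proposal is correct and follows essentially the same route as the paper: energy monotonicity from testing $\rhotk$ in the JKO step, the bounds of \Cref{prop:Some properties} (resp.\ \Cref{prop:prop_energies_critical} in the critical case) for \eqref{eq: discrete H1}--\eqref{eq: discrete Lp}, a telescoped sum plus Cauchy--Schwarz for the second moment and the approximate $\tfrac12$-H\"older estimate, and the refined Ascoli--Arzel\`a theorem \cite[Proposition 3.3.1]{AGS} together with lower semicontinuity of $\mathrm{m}_2$ to conclude.
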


\begin{proof}
   By construction of the sequence we have
    \begin{equation}
      \mf_m[\rhot^{k}]  \leq\frac{\mathcal{W}_2^2 (\rhot^k, \rhot^{k-1})}{2 \tau} + \mf_m[\rhot^k] \leq \mf_m[\rhot^{k-1}].
        \label{eq:prop_limit_1}
    \end{equation}
    In particular, this gives
  \begin{equation*}
      \sup_{k} \mf_m[\rhot^k] \leq \mf_m[\rho_0]<+\infty,
    \end{equation*}
which together with \eqref{eq:grad rho is L2} and \eqref{eq:control of the Lp norm} implies that $\Vert \nabla\rhot^k\Vert_{L^2(\Rd)}$ and $\Vert \rhot^k\Vert_{L^p(\Rd)}$ are uniformly bounded in $k$ and $\tau$ for $p\in[1,2^{\ast}]$. Hence we obtain \eqref{eq: discrete H1} and \eqref{eq: discrete Lp}. 
    
     Next, by summing up over $k$ in \eqref{eq:prop_limit_1} and using that the free energy is bounded from below, \eqref{eq:Free energy bounded below}, we deduce
    \begin{equation}
        \sum_{k = i+1}^{j} \frac{\mathcal{W}_2^2 (\rhot^k, \rhot^{k-1})}{2 \tau} \leq \mf_m[\rhot^{i}] - \mf_m[\rhot^{j}]\leq\mathcal{F}_m[\rho_0] + C.
        \label{eq:prop_limit_2}
    \end{equation}
    Therefore the $2$-Wasserstein distance between $\rho_0$ and $\rhot(t)$ is uniformly bounded. Indeed, for $t\in((j-1)\tau,j\tau]$,
    \begin{align*}
        \mathcal{W}_2^2 (\rho_0 , \rhot(t) ) & \leq j\sum_{k = 1}^{j} \mathcal{W}_2^2 (\rhot^k,\rhot^{k-1}) \leq 2 j\tau  (\mf_m[\rho_0] + C) \le 2T (\mf_m[\rho_0]+C).
    \end{align*}
    Furthermore, we obtain second order moments are uniformly bounded on compact time intervals $[0,T]$ since
    \begin{equation*}
        \mathrm{m}_2\left(\rhot(t)\right) \leq 2\mathrm{m}_2(\rho_0) + 2\mw_2^2(\rho_0,\rhot(t))\leq 2\mathrm{m}_2(\rho_0) + 4T\left(\mf_m[\rho_0]+C\right).
    \end{equation*}

    Let us now prove equi-continuity. Consider $0 \leq s < t$ such that $s \in ((i-1) \tau , i \tau ]$ and $t \in ((j-1) \tau , j \tau ]$. Using Cauchy--Schwarz inequality and \eqref{eq:prop_limit_2} we have
    \begin{align}\label{eq:equicontinuity on d2}
    \begin{split}
        \mathcal{W}_2 (\rhot (s) , \rhot (t) ) & \!\leq\! \sum_{k=i+1}^j \mathcal{W}_2 (\rhot^{k} , \rhot^{k-1} ) \!\leq\! \left( \sum_{k=i+1}^j\mathcal{W}_2^2 (\rhot^{k} , \rhot^{k-1} ) \right)^{\frac{1}{2}} \!\!|j-i|^{\frac{1}{2}} \\
        & \!\leq\! \left( 2  (\mf_m[\rho_0] + C) \right)^{\frac{1}{2}} \left( \sqrt{|t-s|} + \sqrt{\tau} \right).
    \end{split}
    \end{align}
    Thus, $\rhot$ is $\frac{1}{2}$-H\"{o}lder equi-continuous up to a negligible error of order $\sqrt{\tau}$. Therefore, by a refined version of the Ascoli-Arzelà Theorem \cite[Proposition 3.3.1]{AGS}, we obtain that $\rhot$ admits a subsequence narrowly converging to a limit $\tilde{\rho}$ as $\tau \rightarrow 0^+$, uniformly on $[0,T]$. Moreover, using the uniform bound \eqref{eq:uniform second order moment bound} and that $|\cdot|^2$ is lower semicontinuous and bounded from below, we obtain that the limiting curve $\Tilde{\rho}$ has bounded second order moments,
    \begin{equation*}
      \mathrm{m}_2(\Tilde{\rho}(t))\leq \liminf_{\tau\downarrow 0} \mathrm{m}_2(\rhot(t)), \quad \forall t\in[0,T].\qedhere
    \end{equation*}
\end{proof}

The bounds \eqref{eq: discrete Lp} and \eqref{eq: discrete H1} imply weak convergence of the interpolation $\rhot$ to a probability density $\tilde{\rho}$ with regularity provided below.

\begin{prop}[Weak convergence]\label{prop: weak convergence interpolation} Let $\rho_0\in\mathcal{P}_2^a(\Rd)$ such that $\mf_m[\rho_0]<+\infty$ and $1\leq m<2+\frac{2}{d}$ or $m = 2+\frac{2}{d}$ with subcritical mass $\chi<\chi_c$. The piecewise interpolation $\rho_{\tau}$ in \eqref{eq:piecewise interpolation} is such that $\rho_\tau\in L^\infty([0,T];L^p(\Rd))\cap L^\infty([0,T];H^1(\Rd))$, for any $p\in[1,2^{\ast}]$. In particular, the limit $\tilde{\rho}$ belongs to $L^\infty([0,T];L^p(\Rd))\cap L^\infty([0,T];H^1(\Rd))$ and
\begin{equation*}
    \rhot\rightharpoonup \Tilde{\rho}\quad\mbox{in } L^2([0,T];H^1(\Rd)).
\end{equation*}
\end{prop}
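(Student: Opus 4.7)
The plan is to leverage the discrete uniform bounds \eqref{eq: discrete H1} and \eqref{eq: discrete Lp} to obtain weak compactness in $L^2([0,T];H^1(\mathbb{R}^d))$ and then identify the weak limit with the narrow limit $\tilde{\rho}$ from \Cref{lem: interpolation narrow convergence}. First, since $\rho_\tau$ is piecewise constant in $t$ with values $\rho_\tau^k$, the discrete bounds transfer directly to
\begin{equation*}
\|\rho_\tau\|_{L^\infty([0,T];L^p(\Rd))} \le C_2, \qquad \|\nabla\rho_\tau\|_{L^\infty([0,T];L^2(\Rd))} \le C_1,
\end{equation*}
uniformly in $\tau$, giving $\rho_\tau\in L^\infty([0,T];L^p(\Rd))\cap L^\infty([0,T];H^1(\Rd))$ for every $p\in[1,2^*]$.

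Next, the above implies $\rho_\tau$ is uniformly bounded in the reflexive Hilbert space $L^2([0,T];H^1(\Rd))$, so by Banach--Alaoglu one can extract a subsequence $\rho_\tau\rightharpoonup\bar\rho$ in $L^2([0,T];H^1(\Rd))$ for some $\bar\rho$. To identify $\bar\rho=\tilde\rho$, I would test against an arbitrary $\varphi\in C_c^\infty((0,T)\times\Rd)$: on one hand, the weak convergence gives
\begin{equation*}
\int_0^T\!\!\int_{\Rd}\rho_\tau\varphi\,\dx\,\dt \longrightarrow \int_0^T\!\!\int_{\Rd}\bar\rho\,\varphi\,\dx\,\dt,
\end{equation*}
while on the other hand, using the narrow convergence $\rho_\tau(t)\to\tilde\rho(t)$ uniform in $t$ from \Cref{lem: interpolation narrow convergence} pointwise in $t$, together with the dominated convergence theorem in $t$ (the bound $|\int_{\Rd}\rho_\tau(t)\varphi(t,\cdot)\,\dx|\le\|\varphi\|_{L^\infty}$ is integrable on $[0,T]$), the same limit equals $\int_0^T\int_{\Rd}\tilde\rho\,\varphi$. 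Density of $C_c^\infty((0,T)\times\Rd)$ in $L^2([0,T];H^1(\Rd))^*$ then forces $\bar\rho=\tilde\rho$ as elements of $L^2([0,T];H^1(\Rd))$.

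For the remaining pointwise-in-time regularity of $\tilde\rho$, I would argue as follows: at each fixed $t\in[0,T]$, the uniform bound $\|\rho_\tau(t)\|_{L^p(\Rd)}\le C_2$ (with $p>1$) allows extracting a further subsequence along which $\rho_\tau(t)\rightharpoonup\rho^*(t)$ weakly in $L^p(\Rd)$; testing against $C_c^\infty(\Rd)$ and comparing with the narrow convergence identifies $\rho^*(t)=\tilde\rho(t)$, so by weak lower semicontinuity $\|\tilde\rho(t)\|_{L^p(\Rd)}\le\liminf_\tau\|\rho_\tau(t)\|_{L^p(\Rd)}\le C_2$. The analogous argument on $\nabla\rho_\tau(t)\in L^2(\Rd)$ gives $\|\nabla\tilde\rho(t)\|_{L^2(\Rd)}\le C_1$, yielding $\tilde\rho\in L^\infty([0,T];L^p(\Rd))\cap L^\infty([0,T];H^1(\Rd))$.

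The only delicate point is the identification of the two limits, which mixes weak $L^2_t H^1_x$ convergence with pointwise-in-$t$ narrow convergence; this is resolved by the dominated convergence argument above, exploiting both the mass normalisation $\|\rho_\tau(t)\|_{L^1(\Rd)}=1$ and the uniformity in $t$ of the narrow convergence provided by \Cref{lem: interpolation narrow convergence}. Everything else is an application of reflexive weak compactness and lower semicontinuity.
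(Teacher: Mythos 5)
Your proposal is correct and follows essentially the same route as the paper: transfer the discrete bounds of Lemma~\ref{lem: interpolation narrow convergence} to uniform $L^\infty$-in-time bounds, apply Banach--Alaoglu in $L^2([0,T];H^1(\Rd))$, and identify the weak limit with the narrow limit $\tilde\rho$. The paper compresses the identification and the regularity of the limit into ``standard arguments,'' which you have simply written out explicitly (testing against $C_c^\infty((0,T)\times\Rd)$ with dominated convergence in $t$, and weak lower semicontinuity pointwise in $t$).
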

\begin{proof}
    From \eqref{eq: discrete Lp} in \Cref{lem: interpolation narrow convergence} we have
    \begin{equation*}
        \Vert\rhot \Vert_{L^\infty([0,T];L^p(\Rd))} = \sup_{t\in(0,T)}\Vert\rhot(t)\Vert_{L^p(\Rd)} = \sup_k\Vert\rhot^k\Vert_{L^p(\Rd)}<+\infty,
    \end{equation*}
    for $p\in[1,2^{\ast}]$. 
    Analogously, from \eqref{eq: discrete H1} we obtain $\nabla\rhot\in L^\infty([0,T];L^2(\Rd))$. In particular, for any compact time interval $[0,T]$ with $T>0$, we have $\|\rhot\|_{L^2([0,T];H^1(\Rd))}\le C$ uniformly in $\tau$ and the weak convergence follows from Banach-Alaoglu Theorem. Regularity of the limit follows from standard arguments.
\end{proof}

The uniform-in-$\tau$ $L^\infty([0,T];H^1(\Rd))$ estimate allows us to obtain strong convergence of $\rhot$ via a refined version of the Aubin-Lions Lemma due to Rossi and Savar\'e --- cf.~\Cref{prop:RS03}.

\begin{prop}[Strong convergence of $\rhot$]\label{prop: strong convergence rho} Let $\rho_0\in\mathcal{P}_2^a(\Rd)$ such that $\mf_m[\rho_0]<+\infty$ and $1\leq m<2+\frac{2}{d}$ or $m = 2+\frac{2}{d}$ with subcritical mass $\chi<\chi_c$. The sequence $\rho_\tau$ converges, up to a subsequence, strongly to the curve $\Tilde{\rho}$ 
in $L^2([0,T]; L^2(\Rd))$
for every $T > 0$.
\end{prop}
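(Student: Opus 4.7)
The plan is to apply the Rossi--Savar\'e refined Aubin--Lions Lemma (Proposition~\ref{prop:RS03}) in $X = L^2(\mathbb{R}^d)$ to obtain convergence in measure in $t$, and then upgrade to strong convergence in $L^2([0,T];L^2(\mathbb{R}^d))$ via the uniform $L^\infty([0,T];L^2(\mathbb{R}^d))$ bound on $\rho_\tau$ already secured in Proposition~\ref{prop: weak convergence interpolation}.

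For the tightness functional I would take
\[
\mathcal{I}[\rho] := \|\rho\|_{H^1(\mathbb{R}^d)}^2 + \mathrm{m}_2(\rho),
\]
extended by $+\infty$ outside its natural domain. It is lower semicontinuous on $L^2(\mathbb{R}^d)$ and its sublevels are relatively compact in $L^2(\mathbb{R}^d)$: the $H^1$ bound yields local compactness through Rellich--Kondrachov, while the second moment bound prevents escape of mass at infinity via Markov's inequality. As pseudo-distance I use $g = \mathcal{W}_2$; it is lower semicontinuous on $\mathcal{P}_2(\mathbb{R}^d)$ and, since elements with finite $\mathcal{I}$ have finite second moment, $g(\rho,\eta)=0$ forces $\rho=\eta$. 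Tightness along any sequence $\tau_n\downarrow 0$ is immediate from the uniform estimates \eqref{eq: discrete H1} and \eqref{eq:uniform second order moment bound}, which give
\[
\sup_n \int_0^T \mathcal{I}[\rho_{\tau_n}(t)]\,\dt \leq C T < \infty.
\]

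The weak integral equi-continuity will follow from the H\"older-in-time estimate \eqref{eq:equicontinuity on d2}: for $0\leq t \leq T-h$,
\[
\mathcal{W}_2(\rho_\tau(t+h),\rho_\tau(t)) \leq C\bigl(\sqrt{h}+\sqrt{\tau}\bigr),
\]
hence
\[
\int_0^{T-h} \mathcal{W}_2(\rho_{\tau_n}(t+h),\rho_{\tau_n}(t))\,\dt \leq C T\bigl(\sqrt{h}+\sqrt{\tau_n}\bigr).
\]
To match the $\lim_{h\downarrow 0}\sup_n$ formulation of \eqref{integral equicontinuity} one argues by tail splitting: given $\varepsilon>0$, pick $n_0$ so that $\sqrt{\tau_n}<\varepsilon$ for $n\geq n_0$, and handle the finitely many remaining indices individually by observing that each piecewise constant $\rho_{\tau_n}$ has an integrand that vanishes outside a set of measure $O(h)$ in $t$, yielding an individual $h$-modulus. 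Proposition~\ref{prop:RS03} then delivers a (non-relabeled) subsequence converging to some $\tilde\rho$ in measure in $t$, with values in $L^2(\mathbb{R}^d)$.

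Extracting a further subsequence to obtain a.e.-in-$t$ strong convergence in $L^2(\mathbb{R}^d)$, and combining with the uniform bound $\sup_{t,\tau}\|\rho_\tau(t)\|_{L^2(\mathbb{R}^d)}<\infty$ from Proposition~\ref{prop: weak convergence interpolation} applied with $p=2$, dominated convergence upgrades the convergence to strong convergence in $L^2([0,T];L^2(\mathbb{R}^d))$, which identifies the limit with $\tilde\rho$ from Lemma~\ref{lem: interpolation narrow convergence}. The main technical subtlety is precisely the $\sqrt{\tau}$ correction term in the H\"older estimate, which is not uniformly small in $\tau$ for fixed $h$ and so requires the brief tail argument sketched above; everything else is a direct verification of the Rossi--Savar\'e hypotheses against bounds that have already been proven.
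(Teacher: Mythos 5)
Your proposal follows essentially the same route as the paper: Rossi--Savar\'e (Proposition~\ref{prop:RS03}) in $X=L^2(\Rd)$ with exactly the same tightness functional $\mathcal{I}[\rho]=\|\rho\|_{H^1(\Rd)}^2+\mathrm{m}_2(\rho)$ and pseudo-distance $g=\mathcal{W}_2$, tightness from \eqref{eq: discrete H1} and \eqref{eq:uniform second order moment bound}, equi-continuity from the H\"older estimate \eqref{eq:equicontinuity on d2}, and a final upgrade from convergence in measure to strong $L^2([0,T];L^2(\Rd))$ convergence using the uniform $L^\infty_t L^2_x$ bound. Your handling of the $\sqrt{\tau}$ correction by tail-splitting over the indices is a legitimate variant of what the paper does (the paper instead estimates the case $h<\tau$ directly, exploiting that the piecewise constant interpolants differ only on a set of $t$ of measure $O(h)$ per subinterval -- the same observation you make for the finitely many small indices); and your bounded-convergence upgrade at the end is a clean alternative to the paper's citation of an external lemma.

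The one step that is stated too quickly is the relative compactness of the sublevels of $\mathcal{I}$ in $L^2(\Rd)$. Markov's inequality applied to the second moment controls the $L^1$ tail, $\int_{|x|>R}\rho\,\dx\le \mathrm{m}_2(\rho)/R^2$, but what Rellich-plus-tail (or Kolmogorov--Riesz--Fr\'echet, which is what the paper uses) requires is uniform smallness of the $L^2$ tail $\|\rho\|_{L^2(|x|>R)}$. This needs an extra interpolation, e.g.
\begin{equation*}
\int_{|x|>R}\rho^2\,\dx\le \frac{1}{R^{2\delta}}\Bigl(\int_{\Rd}|x|^2\rho\,\dx\Bigr)^{\delta}\Bigl(\int_{\Rd}\rho^{\frac{2-\delta}{1-\delta}}\,\dx\Bigr)^{1-\delta},
\end{equation*}
with $\delta$ chosen so that $\frac{2-\delta}{1-\delta}\in[1,2^{\ast}]$, the last factor being uniformly bounded by the Gagliardo--Nirenberg consequence \eqref{eq:control of the Lp norm} of the $H^1$ bound. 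This is exactly the argument already carried out in the proof of Proposition~\ref{prop:Existence minimiser}, so the gap is cosmetic rather than structural, but as written ``Markov's inequality'' alone does not close it.
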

\begin{proof}
We apply \Cref{prop:RS03} to a subset $U = \{\rho_\tau\}_{\tau\geq 0}$ for $X = L^2(\Rd)$ and $g:=\mw_2$, the 2-Wasserstein distance. Further, we consider the functional $\mathcal{I}:L^2(\Rd)\rightarrow [0,+\infty]$ defined by
\begin{equation*}
    \mathcal{I}[\rho]=\begin{cases}\Vert\rho\Vert_{H^1(\Rd)}^2+\int_\Rd|x|^2\rho(x)\,\dx&\rho\in\mathcal{P}_2(\mathbb{R}^d)\cap H^1(\Rd),\\
+\infty & \mbox{otherwise}.
\end{cases}
\end{equation*}
Note that $\mathcal{W}_2$ is a distance on the proper domain of $\mathcal{I}$. Indeed, if $\mathcal{I}[\rho]<\infty$ then $\rho\in\mathcal{P}_2(\mathbb{R}^d)$. Lower semicontinuity of $\mathcal{I}$ follows from standard arguments --- see for instance \cite{BE22}.

Next, let $B_c = \{\rho\in L^2(\Rd):\mathcal{I}[\rho]\leq c\}$ be a sublevel of $\mathcal{I}$. We notice that $B_c\subset \mathcal{P}_2(\Rd)\cap H^1(\Rd)$ and thus we can apply Kolmogorov-Riesz-Fréchet Theorem \cite[Corollary 4.27]{Bre11} as in the proof of \Cref{prop:Existence minimiser} to obtain that $B_c$ is relatively compact. Hence we have $\mathcal{I}$ is an admissible functional.

The tightness condition \eqref{tightness} follows from the uniform-in-$\tau$ second order moment and $L^\infty([0,T];H^1(\Rd))$ bounds for $\rhot$ given in \eqref{eq:uniform second order moment bound} and \Cref{prop: weak convergence interpolation}. The integral equi-continuity condition \eqref{integral equicontinuity} can be seen from the H\"{o}lder equi-continuity of $\rhot$, proved in \Cref{lem: interpolation narrow convergence}. More precisely, for $h>\tau$ we have
\begin{equation*}
\int_0^{T-h} \mathcal{W}_2\left(\rho_\tau(t+h),\rho_\tau(t)\right)\mathrm{d}t \leq\int_0^{T-h} C \left(\sqrt{h}+\sqrt{\tau}\right)\mathrm{d}t\leq 2CT\sqrt{h},
\end{equation*}
for a constant $C>0$ independent of $\tau$ and $h$. If instead, $h<\tau$, we can write
\begin{align*}
    \int_0^{T-h} \mathcal{W}_2\left(\rho_\tau(t+h),\rho_\tau(t)\right)\mathrm{d}t & \leq h\sum_{k=0}^{N-1}\mathcal{W}_2(\rho_\tau^{k+1},\rho_\tau^k)\\&\leq h \sqrt{N}\sum_{k=0}^{N-1}\mathcal{W}_2^2(\rho_\tau^{k+1},\rho_\tau^k) \leq C h\sqrt{T},
\end{align*}
where $C>0$ is the constant defined in \eqref{eq:prop_limit_2}. 

Hence we can apply \Cref{prop:RS03} to obtain that there exists a subsequence, that we label by $\tau\downarrow 0$, such that $\rhot$ converges in measure to $\Tilde{\rho}$, as in \eqref{eq:convergence_measure_rs}, where $X:=L^2(\Rd)$. 
Let us denote by $A_\delta(\tau):=\{t\in(0,T):\;\Vert \rhot(t) - \Tilde{\rho}(t)\Vert_X\geq\delta\}$, which vanishes as $\tau\to0$. Owing to~\eqref{eq: discrete Lp}~and Proposition~\ref{prop: weak convergence interpolation} we can prove (see e.g.~\cite[Proposition 4.3]{JAC_ESP_WU2023nonlocal})
\[
\limsup_{\tau\to0}\Vert\rhot-\Tilde{\rho}\Vert^2_{L^2 ([0,T];L^2(\Rd))}\le \delta T^{1/2},
\]
hence strong convergence in $L^2 ([0,T];L^2(\Rd))$ since $\delta$ is arbitrarily small.
\end{proof}

\subsection{Flow interchange}
The strong convergence of the sequence $\rhot$ obtained in~\Cref{prop: strong convergence rho} is not enough to pass to the limit in the Euler-Lagrange equation associated to \eqref{eq:JKO} and arrive to a weak formulation of our equation. We use the heat equation as auxiliary flow to obtain uniform bounds on the Hessian of the sequence $\{\rhot\}_\tau$, cf.~\cref{sec:preliminaries}. More precisely, we exploit that the heat equation is a $2$-Wasserstein gradient flow of the entropy functional $\me[\rho]=\int\rho\log\rho\, \dx$.

In the following, for $\mu \in \mathcal{P}_2 (\Rd)$ such that $\me[\mu]<\infty$, we denote by $S_{\me}^t \mu$ the solution at time $t$ of the heat equation for an initial value $\mu$ at $t=0$. Furthermore, we also define the dissipation of $\mf_m$ along $S_{\me}$ by
\begin{equation*}
    D_{\me} \mf_m [\rho] := \limsup_{s \downarrow 0} \left\lbrace \frac{\mf_m[\rho] - \mf_m[S_{\me}^s \rho]}{s} \right\rbrace.
\end{equation*}
\begin{rem} \label{rem: HE}
Given some initial datum $\mu_0\in \mathcal{P}(\Rd)$ the solution of the heat equation, $S_\me^t\mu_0$, can be written as the convolution of the heat kernel $G_t$ with the initial condition, i.e.
\begin{equation*}
    S_\me^t\mu_0 = G_t * \mu_0 = {\left(4\pi t\right)^{-d/2}}\int_\Rd e^{-{|x-y|^2}/{4t}}\mathrm{d}\mu_0(y).
\end{equation*}
As a consequence, $S_\me^t\mu_0\in C^\infty\left((0,+\infty)\times\Rd\right)$. Moreover, for solutions of the heat equation we can integrate by parts to obtain the well-known equality
\begin{equation}
\label{HE: hessian laplacian}
    \int_\Rd|\Delta S_\me^t\mu_0|^2 \,\dx = \int_\Rd| D^2S_\me^t\mu_0|^2 \,\dx\,.
\end{equation}
\end{rem}

We are now ready to prove an $H^2$ bound for $\rho^{\tau}$.

\begin{lem}[$H^2$ uniform bound]\label{H2 bound flow interchange}
     Let $\rho_0$ such that $\mf_m[\rho_0]<+\infty$, and $1\leq m<2+\frac{2}{d}$ or $m = 2+\frac{2}{d}$ with subcritical mass $\chi<\chi_c$. The piecewise interpolation $\rho_{\tau}$ constructed in \eqref{eq:piecewise interpolation} is such that $\rho_\tau\in L^2([0,T];H^2(\Rd))$. In particular, we obtain the uniform-in-$\tau$ bound
    \begin{equation*}
        \| \Delta \rho_{\tau} \|_{L^2 ([0,T];L^2(\Rd))}^2 \leq d \| D^2\rhot\|_{L^2([0,T];L^2(\Rd))}^2\leq C,
    \end{equation*}
    where $C= C(m,d, \rho_0, T)>0$.
\end{lem}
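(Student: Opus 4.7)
The plan is to use the heat semigroup $S_\me$, which is the $2$-Wasserstein $0$-flow of the Boltzmann entropy $\me$ (cf.~Section~\ref{sec:preliminaries}), as the auxiliary flow. For each JKO step $k$, test the minimality of $\rho_\tau^{k+1}$ against the perturbation $S_\me^s \rho_\tau^{k+1}$, divide the resulting inequality by $s>0$, take the $\limsup$ as $s\downarrow 0^+$, and invoke the \eqref{eq:EVI} for the heat flow with $\lambda=0$. This produces the fundamental inequality
\begin{equation*}
\tau\, D_\me \mf_m[\rho_\tau^{k+1}] \leq \me[\rho_\tau^k] - \me[\rho_\tau^{k+1}].
\end{equation*}

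\textbf{Computing the dissipation and absorbing the destabilising term.} Since $S_\me^s \rho$ is smooth for every $s>0$ (cf.~Remark~\ref{rem: HE}), one differentiates each term of $s\mapsto\mf_m[S_\me^s\rho]$ explicitly. For $m>1$ the Dirichlet part yields $\int(\Delta \rho)^2$ and the destabilising entropy $\me_m$ contributes $-m\chi\int\rho^{m-2}|\nabla\rho|^2$, whence
\begin{equation*}
D_\me\mf_m[\rho] = \int_\Rd (\Delta\rho)^2\,\dx - m\chi\int_\Rd \rho^{m-2}|\nabla\rho|^2\,\dx.
\end{equation*}
The destabilising term is absorbed using the integration-by-parts identity $(m-1)\int\rho^{m-2}|\nabla\rho|^2 = -\int\rho^{m-1}\Delta\rho$, Cauchy-Schwarz, and the uniform $L^p$-bound \eqref{eq: discrete Lp} (valid since $2(m-1)\in[1,2^{\ast}]$ in the relevant exponent range), followed by Young's inequality. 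This yields
\begin{equation*}
\tfrac{1}{2}\int_\Rd(\Delta \rho_\tau^{k+1})^2\,\dx \leq D_\me\mf_m[\rho_\tau^{k+1}] + C,
\end{equation*}
with $C$ depending only on the uniform bounds of Section~\ref{sec:preliminaries}.

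\textbf{Telescoping and Hessian bound.} Combining the two previous displays, multiplying by $\tau$ and summing $k=0,\dots,N-1$ gives a telescoping sum,
\begin{equation*}
\tfrac{1}{2}\|\Delta\rho_\tau\|_{L^2([0,T];L^2(\Rd))}^2 \leq \me[\rho_0]-\me[\rho_\tau^N] + CT.
\end{equation*}
The initial entropy $\me[\rho_0]$ is finite (since by assumption $\rho_0\in H^1(\Rd)\cap L^p(\Rd)$ for $p\in[1,2^{\ast}]$, so $\rho_0\log\rho_0\in L^1(\Rd)$), and $\me[\rho_\tau^N]$ is controlled from below by Remark~\ref{rem:lower bound rho log rho} together with the uniform moment bound~\eqref{eq:uniform second order moment bound}. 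This delivers the desired uniform-in-$\tau$ bound on $\|\Delta\rho_\tau\|_{L^2([0,T];L^2(\Rd))}$. For the Hessian, one uses the identity $\|D^2 \rho\|_{L^2}^2 = \|\Delta\rho\|_{L^2}^2$ on $H^2(\Rd)$ (the $H^2$-analogue of \eqref{HE: hessian laplacian}, obtained by integrating by parts twice and closing by density), combined with the pointwise Cauchy-Schwarz inequality $|\Delta\rho|^2\le d\,|D^2\rho|^2$.

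\textbf{Main obstacle.} The principal technical point is making the dissipation computation fully rigorous: $\rho_\tau^{k+1}$ only lies in $H^1(\Rd)$ a priori, so the differentiation of $s\mapsto\mf_m[S_\me^s\rho_\tau^{k+1}]$ must be performed at $s>0$ (where $S_\me^s\rho_\tau^{k+1}$ is smooth with finite Fisher information) and then limits/lower-semicontinuity must be invoked at $s\downarrow 0^+$. The linear case $m=1$ requires a dedicated argument since the destabilising term becomes the Fisher information $\chi\int|\nabla\rho|^2/\rho$, which is not controlled by the $L^p$ bounds alone; it can be handled by exploiting the comparison $\mf_1\geq\mf_2$ noted in the proof of Proposition~\ref{prop:Some properties}, reducing the relevant estimates to the case $m=2$.
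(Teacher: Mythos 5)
Your overall strategy coincides with the paper's: flow interchange with the heat semigroup, the \eqref{eq:EVI} for the $0$-flow $S_{\me}$, absorption of the destabilising term via Cauchy--Schwarz and Young, telescoping in $k$, and passage from $\Delta$ to $D^2$ through the identity \eqref{HE: hessian laplacian} together with lower semicontinuity at $s\downarrow 0$. For $m>1$ this matches the paper's Case I almost line by line, with one small inaccuracy: your parenthetical claim that $2(m-1)\in[1,2^{\ast}]$ ``in the relevant exponent range'' fails for $1<m<\tfrac{3}{2}$, where $2(m-1)<1$ and the uniform bound \eqref{eq: discrete Lp} does not apply directly. The paper treats this subrange separately, estimating $\int (S_{\me}^{st}\rho_\tau^{k+1})^{2(m-1)}\,\dx$ by Jensen's inequality for the concave function $|\cdot|^{2(m-1)}$ and unit mass; the same fix works verbatim in your argument.

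The genuine gap is your treatment of $m=1$. The comparison $\mf_1\ge\mf_2$ is a pointwise inequality between the \emph{values} of the two functionals; it gives no control on the difference $\mf_1[\rho_\tau^{k+1}]-\mf_1[S_{\me}^s\rho_\tau^{k+1}]$ entering the flow-interchange inequality, since $\rho_\tau^{k+1}$ minimises $\JKOstep_1$, not $\JKOstep_2$, and differences of $\mf_1$ along the heat flow are not majorised by differences of $\mf_2$. Concretely, the dissipation of $\mf_1$ along $S_{\me}$ produces the Fisher information $\chi\int_\Rd|\nabla\rho|^2/\rho\,\dx$, whereas that of $\mf_2$ produces $2\chi\int_\Rd|\nabla\rho|^2\,\dx$, and the former is not controlled by the latter (nor by the available $L^p$ or $H^1$ bounds) on regions where $\rho$ is small. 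The paper's proof instead recognises $\int_0^1\int_\Rd\nabla S_{\me}^{st}\rho\cdot\nabla\log S_{\me}^{st}\rho\,\dx\,\mathrm{d}t$ as the entropy production of the heat flow, rewrites it in terms of $\me[\rho_\tau^{k+1}]-\me[S_{\me}^{s}\rho_\tau^{k+1}]$, and bounds it uniformly in $k$ using properties of the heat semigroup and the estimates of Lemma~\ref{lem: interpolation narrow convergence}. You need an argument of this type, or some other genuine control of the Fisher information term; the reduction to $m=2$ is not one.
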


\begin{proof} 
    For all $s > 0$, we consider $S_{\me}^s \rho_{\tau}^{k+1}$. Then, by the definition of the scheme \eqref{eq:JKO} and of $\rho_{\tau}^{k+1}$, we have the inequality
    \begin{equation*}
        \frac{1}{2 \tau} \mathcal{W}_2^2 (\rho_{\tau}^k, \rho_{\tau}^{k+1} ) + \mf_m[\rho_{\tau}^{k+1}] \leq \frac{1}{2 \tau} \mathcal{W}_2^2 (\rho_{\tau}^k , S_{\me}^s \rho_{\tau}^{k+1} ) + \mf_m [S_{\me}^s \rho_{\tau}^{k+1}],
    \end{equation*}
    from which we obtain,
    \begin{equation*}
        \tau \frac{\mf_m[\rho_{\tau}^{k+1}] - \mf_m[S_{\me}^s \rho_{\tau}^{k+1}]}{s} \leq \frac{1}{2} \frac{\mathcal{W}_2^2 (\rho_{\tau}^k, S_{\me}^s \rho_{\tau}^{k+1}) - \mathcal{W}_2^2 (\rho_{\tau}^k, \rho_{\tau}^{k+1})}{s}.
    \end{equation*}
    By taking the $\limsup$ as $s \downarrow 0$ we obtain
    \begin{equation}\label{eq:Apply EVI}
        \tau D_{\me} \mf_m[\rho_{\tau}^{k+1}] \leq \frac{1}{2}\left.\frac{\mathrm{d}^+}{\mathrm{d}t}\right|_{t=0}  \mathcal{W}_2^2 (\rho_{\tau}^k , S_{\me}^t \rho_{\tau}^{k+1} )  \leq \me[\rho_{\tau}^k] - \me[\rho_{\tau}^{k+1}],
    \end{equation}
    where in the last inequality we use the \eqref{eq:EVI}, as $S_{\me}$ is a $0$-flow, cf.~Definition \ref{def:lambda_flow}. Note that
    \begin{align}\label{eq:compute DE}
        \begin{split}
            D_{\me} \mf_m [\rho_{\tau}^{k+1}] & = \limsup_{s \downarrow 0} \left\lbrace \frac{\mf_m[\rho_{\tau}^{k+1}] - \mf_m[S_{\me}^s \rho_{\tau}^{k+1}]}{s} \right\rbrace \\
            & = \limsup_{s \downarrow 0} \int_0^1 \left( - \left.\frac{\mathrm{d
            }}{\mathrm{d}z}\right|_{z=st} \mf_m [S_{\me}^z \rho_{\tau}^{k+1}] \right) \, \mathrm{d}t.
        \end{split}
    \end{align}
    From this point of the proof, we distinguish between two cases. 

    \textit{Case I: $1 < m < 2 + \frac{2}{d}$ or $m = 2+\frac{2}{d}$ with subcritical mass $\chi<\chi_c$.}
    Let us compute the time derivative:  \begin{align}\label{eq:Heat flow interchange}
        \begin{split}
            \ddt \mf_m[S_{\me}^t \rho_{\tau}^{k+1}] = & - \int_{\Rd} | \Delta S_{\me}^t \rho_{\tau}^{k+1} |^2 \dx \\
            &  - \frac{\chi m}{m-1} \int_{\Rd} (S_{\me}^t \rho_{\tau}^{k+1})^{m-1} \Delta S_{\me}^t \rho_{\tau}^{k+1} \dx.
       \end{split}
    \end{align}
    Therefore, combining \eqref{eq:Apply EVI}, \eqref{eq:compute DE} and \eqref{eq:Heat flow interchange} we obtain
    \begin{align*}
        & \tau \limsup_{s \downarrow 0} \int_0^1  \left( \int_{\Rd} | \Delta S_{\me}^{st} \rho_{\tau}^{k+1} |^2 \, \dx + \frac{\chi m}{m-1} \int_{\Rd} (S_{\me}^{st} \rho_{\tau}^{k+1})^{m-1} \Delta S_{\me}^{st} \rho_{\tau}^{k+1} \, \dx \right) \, \mathrm{d}t \\
        & \qquad \leq  \me[\rho_{\tau}^k] - \me[\rho_{\tau}^{k+1}].
    \end{align*}
  By applying Young's inequality we have 
    \begin{align*}
        & \int_{\Rd} | \Delta S_{\me}^{st} \rho_{\tau}^{k+1} |^2 \, \dx + \frac{\chi m}{m-1} \int_{\Rd} (S_{\me}^{st} \rho_{\tau}^{k+1})^{m-1} \Delta S_{\me}^{st} \rho_{\tau}^{k+1} \, \dx \\ &\qquad \geq
     \int_{\Rd} | \Delta S_{\me}^{st} \rho_{\tau}^{k+1} |^2 \, \dx - \frac{\chi m}{m-1} \int_{\Rd} |(S_{\me}^{st} \rho_{\tau}^{k+1})^{m-1}| | \Delta S_{\me}^{st} \rho_{\tau}^{k+1} | \, \dx\\&\qquad \geq
        \frac{1}{2} \int_{\Rd} | \Delta S_{\me}^{st} \rho_{\tau}^{k+1} |^2 \, \dx - \frac{\chi^2 m^2}{2(m-1)^2} \int_{\Rd} |S_{\me}^{st} \rho_{\tau}^{k+1}|^{2(m-1)} \, \dx  \,,
    \end{align*}
    which gives
    \begin{align*}
        & \frac{\tau}{2} \liminf_{s \downarrow 0} \int_0^1  \int_{\Rd} | \Delta S_{\me}^{st} \rho_{\tau}^{k+1} |^2 \, \dx \, \mathrm{d}t \\
        & \qquad \leq  \me[\rho_{\tau}^k] - \me[\rho_{\tau}^{k+1}] + \tau \frac{\chi^2 m^2}{2(m-1)^2} \limsup_{s \downarrow 0} \int_0^1 \int_{\Rd} |S_{\me}^{st} \rho_{\tau}^{k+1}|^{2(m-1)} \, \dx  \, \mathrm{d}t.
    \end{align*}

    In order to take the $s\downarrow 0$ limit in the above expression, first we note that, in view of \Cref{rem: HE}, we can write $\Vert \Delta S_\me^{st}\rhot^{k+1}\Vert_{L^2(\Rd)} = \Vert  D^2 S_\me^{st}\rhot^{k+1}\Vert_{L^2(\Rd)}$. Since the auxiliary flow is the heat equation with initial datum $\rhotkk\in H^1(\Rd)$, we have $S_\me^t \rhotkk\to\rhotkk$ in $L^2(\Rd)$ as well as $\nabla S_\me^t \rhotkk\to\nabla\rhotkk$ in $L^2(\Rd)$ as $t\downarrow 0$ 
   --- by noticing that $\nabla S_\me^t\rhotkk$ is a solution to the heat equation with initial datum $\nabla\rhotkk\in L^2(\Rd)$.
   By the weak lower-semicontinuity of the $H^1$ seminorm we have 
   \begin{equation}\label{eq:Hessian and Laplacian}
      \liminf_{s \downarrow 0} \int_0^1  \int_{\Rd} |  D^2 S_{\me}^{st} \rho_{\tau}^{k+1} |^2 \, \dx \, \mathrm{d}t \geq \int_{\Rd} |  D^2 \rho_{\tau}^{k+1} |^2 \, \dx\,.
   \end{equation}
   
Next, we focus on the term involving $\Vert S_\me^{st}\rhot^{k+1}\Vert_{L^{2(m-1)}(\Rd)}$ and distinguish between two cases, depending on the value of $m$. We apply Young's convolution inequality to $S_\me^{st}\rhot^{k+1} = G_{st}*\rhot^{k+1}$, as noticed in \Cref{rem: HE}.

If $\frac{3}{2}\leq m<2 + \frac{2}{d}$ or $m = 2+\frac{2}{d}$ with subcritical mass, then $1\leq 2(m-1)<2^{\ast}$ and, by \eqref{eq:control of the Lp norm}, $\rho_\tau^{k+1}\in L^{2(m-1)}(\Rd)$. Furthermore, we have
\begin{equation*}
    \Vert S_\me^{st}\rhot^{k+1}\Vert_{L^{2(m-1)}(\Rd)} \leq \Vert G_{st}\Vert_{L^1(\Rd)}\Vert \rhot^{k+1}\Vert_{L^{2(m-1)}(\Rd)}=\Vert \rhot^{k+1}\Vert_{L^{2(m-1)}(\Rd)},
\end{equation*}
In particular, we obtain
\begin{equation*}
       \limsup_{s \downarrow 0} \int_0^1 \int_{\Rd} |S_{\me}^{st} \rho_{\tau}^{k+1}|^{2(m-1)} \, \dx  \, \mathrm{d}t \leq  \int_{\Rd} | \rho_{\tau}^{k+1}|^{2(m-1)} \, \dx\,.
   \end{equation*}

     If $1<m<\frac{3}{2}$, we use that the function $|\cdot|^{2(m-1)}$ is concave and apply Jensen's inequality to find
   \begin{align*}
\int_{\Rd} |S_{\me}^{st} \rho_{\tau}^{k+1}|^{2(m-1)} \, \dx &\leq    \left| \int_{\Rd} S_{\me}^{st} \rho_{\tau}^{k+1} \, \dx \right|^{2(m-1)}  =\Vert G_{st}*\rhot^{k+1}\Vert_{L^1(\Rd)}^{2(m-1)}\\ &\leq \Vert G_{st}\Vert_{L^1(\Rd)}^{2(m-1)}\Vert\rhot^{k+1}\Vert_{L^1(\Rd)}^{2(m-1)} = \Vert\rhot^{k+1}\Vert_{L^1(\Rd)}^{2(m-1)},
   \end{align*}
    whence
   \begin{equation*}
        \limsup_{s \downarrow 0} \int_0^1 \int_{\Rd} |S_{\me}^{st} \rho_{\tau}^{k+1}|^{2(m-1)} \, \dx  \, \mathrm{d}t \leq \left| \int_{\Rd}  \rho_{\tau}^{k+1}\, \dx\,\right|^{2(m-1)}.
   \end{equation*}   
As a consequence,
    \begin{equation*}
        \frac{\tau}{2} \int_{\Rd} |  D^2 \rho_{\tau}^{k+1} |^2 \, \dx \leq \me[\rho_{\tau}^k] - \me[\rho_{\tau}^{k+1}] + \tau \frac{\chi^2 m^2}{2(m-1)^2}   \Vert\rhot^{k+1}\Vert_{L^q(\Rd)}^{2(m-1)},
    \end{equation*}
    with $q = 2(m-1)$ for $\frac{3}{2}\leq m<2+\frac{2}{d}$ or $m = 2+\frac{2}{d}$ with subcritical mass, and $q = 1$ for $1<m<\frac{3}{2}$.
    By summing up over $k$ from $0$ to $N-1$, considering that $x \log x \leq x^2$ and \Cref{rem:lower bound rho log rho}, we recover, further using Jensen's inequality for concave functions for $q=1$,
    \begin{align*}
        \frac{1}{2} \|  D^2 \rho_{\tau} \|^2_{L^2([0,T];L^2(\Rd))} & \leq \me[\rho_0] - \me[\rho_{\tau}^N] + \frac{\chi^2 m^2}{2(m-1)^2} \sum_{k=0}^{N-1} \tau\Vert\rhot^{k+1}\Vert_{L^q(\Rd)}^{2(m-1)}\\
        & \leq \| \rho_0 \|_{L^2 (\Rd)}^2 + C(1 + \mathrm{m}_2(\rho_{\tau}^N))\\
        &\quad+ \frac{\chi^2 m^2}{2(m-1)^2 T^{2(m-1)-1}} \| \rho_{\tau} \|_{L^{q} ([0,T] ;L^q(\Rd))}^{2(m-1)},
    \end{align*}
    which is uniformly bounded, due to \Cref{lem: interpolation narrow convergence}.
    In particular, we also obtain 
    \begin{equation*}
        \Vert\Delta\rhot\Vert_{L^2([0,T];L^2(\Rd))}\leq \sqrt{d} \Vert D^2\rhot\Vert_{L^2([0,T];L^2(\Rd))}\leq C(m,d, \rho_0, \chi,T).
\end{equation*}

    \textit{Case II: $m=1$.}
    Let us compute the time derivative, 
    \begin{align}\label{eq:m=1 Heta flow interchange}
        \begin{split}
            \ddt \mf_1[S_{\me}^t \rho_{\tau}^{k+1}] &=  - \int_{\Rd} | \Delta S_{\me}^t \rho_{\tau}^{k+1} |^2 \, \dx \\
            &\quad  - \chi \int_{\Rd}  \Delta S_{\me}^t \rho_{\tau}^{k+1} (1 + \log S_{\me}^t \rho_{\tau}^{k+1} ) \, \dx\\
            &=- \int_{\Rd} | \Delta S_{\me}^t \rho_{\tau}^{k+1} |^2 \, \dx \\
            &\quad+\chi\int_{\Rd}  \nabla S_{\me}^{t} \rho_{\tau}^{k+1}\cdot \nabla \log S_{\me}^{t} \rho_{\tau}^{k+1} \, \dx\,.
        \end{split}
    \end{align}
    By combining \eqref{eq:Apply EVI}, \eqref{eq:compute DE} and \eqref{eq:m=1 Heta flow interchange}, we obtain
    \begin{align*}
        & \tau \limsup_{s \downarrow 0} \int_0^1 \left( \int_{\Rd} | \Delta S_{\me}^{st} \rho_{\tau}^{k+1} |^2 \, \dx - \chi\int_{\Rd}  \nabla S_{\me}^{st} \rho_{\tau}^{k+1}\cdot \nabla \log S_{\me}^{st} \rho_{\tau}^{k+1} \, \dx \right) \, \mathrm{d}t \\
        & \qquad \leq  \me[\rho_{\tau}^k] - \me[\rho_{\tau}^{k+1}]\,.
    \end{align*}
   Similarly to the previous case, we obtain
    \begin{align*}
        & \tau \liminf_{s \downarrow 0} \int_0^1 \int_{\Rd} | \Delta S_{\me}^{st} \rho_{\tau}^{k+1} |^2 \, \dx\, \mathrm{d}t \\
        & \qquad \leq  \me[\rho_{\tau}^k] - \me[\rho_{\tau}^{k+1}] + \chi\tau\limsup_{s \downarrow 0}\int_0^1\int_{\Rd}  \nabla S_{\me}^{st} \rho_{\tau}^{k+1}\cdot \nabla \log S_{\me}^{st} \rho_{\tau}^{k+1} \, \dx \, \mathrm{d}t \\
        & \qquad = \me[\rho_{\tau}^k] - \me[\rho_{\tau}^{k+1}] + \chi\tau\limsup_{s \downarrow 0} \left( \me[\rho_{\tau}^{k+1}] - \me[S_{\me}^{s} \rho_{\tau}^{k+1}] \right),
        \end{align*}
where we recognised the third term as the Fisher information functional for solutions of the heat equation. Next, using well-known properties of the heat equation and the estimates in Lemma \ref{lem: interpolation narrow convergence} we have
\begin{equation*}
   \limsup_{s \downarrow 0} \left( \me[\rho_{\tau}^{k+1}] - \me[S_{\me}^{s} \rho_{\tau}^{k+1}] \right)\leq C,
    \end{equation*}
    for a constant $C$ independent of $k$.
    By summing up over $k$ from $0$ to $N-1$, and using \eqref{HE: hessian laplacian} and \eqref{eq:Hessian and Laplacian} again we obtain
    \begin{align*}
        \|  D^2 \rho_{\tau} \|_{L^2([0,T]; L^2(\Rd))}^2  \leq \me[\rho_0] - \me[\rho_{\tau}^N] + \tau N C,
    \end{align*}
    and in particular, $\Delta \rho_{\tau}$ is uniformly bounded in $L^2([0,T]; L^2(\Rd))$.
\end{proof}

\begin{prop}[Strong convergence of $\nabla\rhot$]\label{prop: interpolation strong convergence gradient} Let $\rho_0$ such that $\mf_m[\rho_0]<+\infty$, and $1 \leq m<2+\frac{2}{d}$ or $m = 2+\frac{2}{d}$ with subcritical mass $\chi<\chi_c$. Up to a subsequence, the sequence $\rho_\tau:[0,T]\rightarrow\mathcal{P}_2(\Rd)$ converges strongly to the curve $\Tilde{\rho}$ in $L^2([0,T]; H^1(\Rd))$. 
\end{prop}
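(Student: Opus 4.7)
The plan is to combine the strong $L^2([0,T];L^2(\Rd))$ convergence of $\rho_\tau$ from \Cref{prop: strong convergence rho} with the uniform $H^2$ bound from \Cref{H2 bound flow interchange} through a simple interpolation estimate in $H^1$. The key elementary observation is that for any $f\in H^2(\Rd)$, a density argument via $C_c^\infty(\Rd)$ yields
\begin{equation*}
    \|\nabla f\|_{L^2(\Rd)}^2 = -\int_\Rd f\,\Delta f\,\dx \leq \|f\|_{L^2(\Rd)}\,\|\Delta f\|_{L^2(\Rd)}.
\end{equation*}

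First, I would upgrade the convergence of $\Delta\rho_\tau$. Since $\Delta \rho_\tau$ is uniformly bounded in $L^2([0,T];L^2(\Rd))$ by \Cref{H2 bound flow interchange}, Banach--Alaoglu provides a further (non-relabelled) subsequence and a limit $g\in L^2([0,T];L^2(\Rd))$ with $\Delta \rho_\tau\rightharpoonup g$ in $L^2([0,T];L^2(\Rd))$. Testing against $\varphi\in C_c^\infty((0,T)\times\Rd)$ and using the weak convergence $\rho_\tau \rightharpoonup \Tilde{\rho}$ in $L^2([0,T];H^1(\Rd))$ from \Cref{prop: weak convergence interpolation}, one identifies $g=\Delta \Tilde{\rho}$ in the sense of distributions. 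Hence $\Tilde{\rho}\in L^2([0,T];H^2(\Rd))$ with $\|\Delta\Tilde{\rho}\|_{L^2([0,T];L^2(\Rd))}\le \liminf_{\tau\downarrow0}\|\Delta\rho_\tau\|_{L^2([0,T];L^2(\Rd))}\le C$.

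Applying the interpolation inequality above to $f=\rho_\tau(t,\cdot)-\Tilde{\rho}(t,\cdot)\in H^2(\Rd)$ for a.e. $t\in[0,T]$, integrating over $[0,T]$ and applying Cauchy--Schwarz in time gives
\begin{align*}
    \|\nabla(\rho_\tau-\Tilde{\rho})\|_{L^2([0,T];L^2(\Rd))}^2 &\le \int_0^T \|\rho_\tau-\Tilde{\rho}\|_{L^2(\Rd)}\|\Delta(\rho_\tau-\Tilde{\rho})\|_{L^2(\Rd)}\dt\\
    &\le \|\rho_\tau-\Tilde{\rho}\|_{L^2([0,T];L^2(\Rd))}\,\|\Delta(\rho_\tau-\Tilde{\rho})\|_{L^2([0,T];L^2(\Rd))}.
\end{align*}
The second factor is uniformly bounded in $\tau$ by the triangle inequality and the $H^2$ bounds for both $\rho_\tau$ and $\Tilde{\rho}$, whereas the first factor vanishes as $\tau\downarrow 0$ by \Cref{prop: strong convergence rho}. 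Combining this with the $L^2$ strong convergence of $\rho_\tau$ already established, we conclude $\rho_\tau\to\Tilde{\rho}$ strongly in $L^2([0,T];H^1(\Rd))$.

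The only mildly delicate step is the identification of the weak limit of $\Delta\rho_\tau$ as $\Delta\Tilde{\rho}$ and the corresponding assertion $\Tilde{\rho}\in L^2([0,T];H^2(\Rd))$; the rest is a clean interpolation argument. Everything else is a direct consequence of results already proven in the excerpt.
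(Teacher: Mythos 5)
Your proposal is correct and follows essentially the same route as the paper: both identify the weak $L^2([0,T];L^2(\Rd))$ limit of the second derivatives of $\rho_\tau$ as those of $\Tilde{\rho}$ (so that $\Tilde{\rho}\in L^2([0,T];H^2(\Rd))$ with a uniform bound), and then interpolate the $H^1$ seminorm between $L^2$ and $H^2$ via Cauchy--Schwarz, letting the strong $L^2([0,T];L^2(\Rd))$ convergence of Proposition~\ref{prop: strong convergence rho} carry the gradient convergence. The only cosmetic difference is that you phrase the interpolation through the integration-by-parts identity $\|\nabla f\|_{L^2(\Rd)}^2=-\int_\Rd f\,\Delta f\,\dx$, whereas the paper invokes the Gagliardo--Nirenberg inequality for the gradient with $D^2$ in place of $\Delta$; these are equivalent for $H^2$ functions.
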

\begin{proof}
First note that due to~\Cref{H2 bound flow interchange}, $ D^2\rhot\rightharpoonup D^2\Tilde{\rho}$ in $L^2([0,T];L^2(\Rd))$. The limit can be uniquely identified by integrating against a smooth and compactly supported test function and using the convergence $\nabla\rhot\rightharpoonup\nabla\Tilde{\rho}$ in $L^2([0,T];L^2(\Rd))$, cf.~\Cref{prop: weak convergence interpolation}.
Next, we claim strong convergence of $\rhot$ in $L^2([0,T];H^1(\Rd))$ follows from the strong convergence in $L^2([0,T];L^2(\Rd))$, cf.~\Cref{prop: strong convergence rho}, and the fact that $\|\rhot\|_{L^2([0,T];H^2(\Rd))}$ is uniformly bounded in $\tau$, as given in~\Cref{H2 bound flow interchange}. More precisely, using Gagliardo--Nirenberg (for the gradient) and Cauchy--Schwarz inequalities, we obtain
\begin{align*}
\int_0^T&\Vert\nabla\rhot(t)-\nabla\Tilde{\rho}(t)\Vert^2_{L^2(\Rd)}\,\mathrm{d}t \\
    &\leq C\int_0^T\Vert D^2\rhot(t)- D^2\Tilde{\rho}(t)\Vert_{L^2(\Rd)}\Vert\rhot(t)-\Tilde{\rho}(t)\Vert_{L^2(\Rd)}\,\mathrm{d}t\\&\leq C\Vert D^2\rhot- D^2\Tilde{\rho}\Vert_{L^2([0,T];L^2(\Rd))}\Vert\rhot-\Tilde{\rho}\Vert_{L^2([0,T];L^2(\Rd))}.
\end{align*}
The result is obtained by using that the norms $\Vert D^2\rhot\Vert_{L^2([0,T];L^2(\Rd))}$ and $\Vert D^2\tilde\rho\Vert_{L^2([0,T];L^2(\Rd))}$ are uniformly bounded in $\tau$ --- \Cref{H2 bound flow interchange}.
\end{proof}

The strong convergence of $\nabla\rhot$ allows us to improve the result of $\rhot$ given by Proposition~\ref{prop: strong convergence rho} via interpolation inequalities. In particular, we obtain the integrability exponent needed to pass to the limit $\tau\rightarrow 0$ in the weak formulation.

\begin{cor}[Higher integrability]\label{cor:higher integrability}
Assume $1<m < 2+\frac{2}{d}$ or $m = 2+\frac{2}{d}$ with subcritical mass $\chi<\chi_c$. Then, the sequence $\rho_\tau:[0,T]\rightarrow\mathcal{P}_2(\Rd)$ converges strongly, up to subsequence, to the curve $\Tilde{\rho}$ 
in $L^m([0,T]; L^m(\Rd))$
for every $T > 0$.
\end{cor}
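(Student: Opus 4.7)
The plan is to upgrade the $L^2([0,T];L^2(\mathbb{R}^d))$ strong convergence of $\rho_\tau$ obtained in Proposition~\ref{prop: strong convergence rho} to $L^m([0,T];L^m(\mathbb{R}^d))$ strong convergence by means of $L^p$ interpolation in space combined with Hölder in time, leveraging the uniform $L^\infty([0,T];L^p(\mathbb{R}^d))$ bounds for every $p\in[1,2^\ast]$ given by Proposition~\ref{prop: weak convergence interpolation} (and the fact that $m_c=2+2/d<2^\ast$).

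Set $f_\tau := \rho_\tau - \tilde\rho$. For the range $1<m\le 2$, I would use the interpolation inequality
\[
\|f_\tau(t)\|_{L^m(\Rd)} \le \|f_\tau(t)\|_{L^1(\Rd)}^{(2-m)/m}\,\|f_\tau(t)\|_{L^2(\Rd)}^{2(m-1)/m},
\]
together with the uniform bound $\|f_\tau(t)\|_{L^1(\Rd)}\le 2$ (since both $\rho_\tau(t)$ and $\tilde\rho(t)$ are probability measures). Raising to the $m$-th power, integrating in time, and applying Hölder's inequality with exponent $1/(m-1)$ yields
\[
\int_0^T\|f_\tau(t)\|_{L^m(\Rd)}^m\,\dt \le C\,T^{2-m}\,\|f_\tau\|_{L^2([0,T];L^2(\Rd))}^{2(m-1)},
\]
which vanishes as $\tau\downarrow 0$ by Proposition~\ref{prop: strong convergence rho}.

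For the range $2<m<2+2/d$, or the critical case $m=m_c$ with $\chi<\chi_c$, I would fix some $p$ with $m<p\le 2^\ast$ (which is possible since $m<m_c<2^\ast$), and interpolate in the opposite direction:
\[
\|f_\tau(t)\|_{L^m(\Rd)} \le \|f_\tau(t)\|_{L^2(\Rd)}^{\theta}\,\|f_\tau(t)\|_{L^p(\Rd)}^{1-\theta},\qquad \frac{1}{m}=\frac{\theta}{2}+\frac{1-\theta}{p}.
\]
By Proposition~\ref{prop: weak convergence interpolation} and the uniform $L^1$ bound, $\|f_\tau(t)\|_{L^p(\Rd)}$ is uniformly bounded in $t$ and $\tau$. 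Hence, after raising to the $m$-th power, integrating in time and using Hölder with exponent $2/(m\theta)$ (noting that one can choose $p$ close enough to $2^\ast$ so that $m\theta\le 2$, which is anyway automatic since $\theta<1$ and $m<4$ in the relevant range), we obtain
\[
\int_0^T\|f_\tau(t)\|_{L^m(\Rd)}^m\,\dt \le C\,\|f_\tau\|_{L^2([0,T];L^2(\Rd))}^{m\theta}\longrightarrow 0.
\]

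The argument is essentially routine once Propositions~\ref{prop: weak convergence interpolation} and~\ref{prop: strong convergence rho} are in place; the only mildly delicate point is the bookkeeping in the supercritical-in-$L^2$ regime $m>2$, where one must verify that $p\in(m,2^\ast]$ is admissible (which follows from $m_c<2^\ast$) and that the resulting Hölder exponent in time is finite. No new compactness or flow-interchange argument is required, so I do not anticipate a genuine obstacle.
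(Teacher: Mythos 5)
Your argument is correct, but it takes a genuinely different route from the paper's. The paper proves the corollary by applying the Gagliardo--Nirenberg inequality to $\rho_\tau-\tilde\rho$, bounding $\|\rho_\tau(t)-\tilde\rho(t)\|_{L^m}^m$ by $\|\nabla\rho_\tau(t)-\nabla\tilde\rho(t)\|_{L^2}^{m\theta}\|\rho_\tau(t)-\tilde\rho(t)\|_{L^1}^{m(1-\theta)}$ and then using H\"older in time; the convergence is then driven by the \emph{strong $L^2([0,T];L^2)$ convergence of the gradients} from \Cref{prop: interpolation strong convergence gradient}, which itself rests on the $H^2$ flow-interchange bound of \Cref{H2 bound flow interchange}. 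You instead use only Lebesgue interpolation between $L^1$, $L^2$ and $L^p$ with $p\in(m,2^\ast]$, the uniform $L^\infty([0,T];L^p)$ bounds of \Cref{prop: weak convergence interpolation}, and the strong $L^2([0,T];L^2)$ convergence of $\rho_\tau$ itself from \Cref{prop: strong convergence rho}. Your route is more elementary in that it makes the corollary independent of the flow-interchange machinery (which is of course still needed elsewhere to pass to the limit in the weak formulation), while the paper's version reuses verbatim the Gagliardo--Nirenberg computation of \Cref{prop:Some properties}. One small remark: your justification of $m\theta\le 2$ in the case $m>2$ (``$\theta<1$ and $m<4$'') does not actually imply that inequality; however, the inequality is automatic, since the interpolation relation gives $1=\tfrac{m\theta}{2}+\tfrac{m(1-\theta)}{p}\ge\tfrac{m\theta}{2}$, so there is no real gap (and even if $m\theta$ exceeded $2$, the uniform $L^\infty([0,T];L^2)$ bound would let you absorb the excess power).
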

\begin{proof} The proof is based on that of \Cref{prop:Some properties}. For $1<m<2+\frac{2}{d}$, by applying Gagliardo--Nirenberg and H\"{o}lder inequalities we obtain
\begin{align}
    \int_0^T&\Vert\rhot(t)-\Tilde{\rho}(t)\Vert_{L^m(\Rd)}^m\,\mathrm{d}t \nonumber\\
    &\leq C \int_0^T\Vert\nabla\rhot(t)-\nabla\Tilde{\rho}(t)\Vert_{L^2(\Rd)}^{m\theta}\Vert\rhot(t)-\Tilde{\rho}(t)\Vert_{L^1(\Rd)}^{m(1-\theta)}\,\mathrm{d}t\label{eq: GN higher integrability}\\&\leq C\Vert\nabla\rhot-\nabla\Tilde{\rho}\Vert^{m\theta}_{L^2([0,T];L^2(\Rd))}\left(\int_0^T\Vert\rhot(t)-\Tilde{\rho}(t)\Vert^\alpha_{L^1(\Rd)}\,\mathrm{d}t\right)^{\frac{m(1-\theta)}{\alpha}}\nonumber,
\end{align}
where $\theta =\frac{2d}{d+2}\frac{m-1}{m}\in(0,1)$ and $\alpha = 1+\frac{\frac{2}{d}(m-1)}{2+ \frac{2}{d}-m}$. The result follows from the strong convergence of $\nabla\rhot$ and by noting that the second term is uniformly bounded in $\tau$ due to the narrow convergence of $\rhot$ given in \Cref{lem: interpolation narrow convergence}, being $\rho_t$ and $\rho$ probability densities.

In the critical case $m = m_c = 2 + \frac{2}{d}$, \eqref{eq: GN higher integrability} gives 
\begin{align*}
    \int_0^T&\Vert\rhot(t)-\Tilde{\rho}(t)\Vert_{L^m(\Rd)}^m\,\mathrm{d}t \\
    &\leq C \int_0^T\Vert\nabla\rhot(t)-\nabla\Tilde{\rho}(t)\Vert_{L^2(\Rd)}^{2}\Vert\rhot(t)-\Tilde{\rho}(t)\Vert_{L^1(\Rd)}^{\tfrac{2}{d}}\,\mathrm{d}t\,,
    \\
    &\leq C\Vert\nabla\rhot-\nabla\Tilde{\rho}\Vert^{2}_{L^2([0,T];L^2(\Rd))}\Vert\rhot(t)-\Tilde{\rho}(t)\Vert^{\tfrac{2}{d}}_{L^\infty([0,T];L^1(\Rd))},
\end{align*}
where the second term is uniformly bounded in $\tau$ by \Cref{lem: interpolation narrow convergence} and \Cref{prop: weak convergence interpolation}. Again, the result follows from the strong convergence of $\nabla\rhot$.
\end{proof}

\subsection{Consistency of the scheme}

The results from the previous subsection ensure we can prove that $\Tilde{\rho}$ is a weak solution of~\eqref{eq:thin_film_intro} in the sense of \Cref{def:weak solution}. This subsection completes the proof of Theorem \ref{thm:main_result_existence}.

\begin{proof}[Proof of Theorem~\ref{thm:main_result_existence}] We prove the theorem by showing that the sequence $\rho_\tau:[0,T]\rightarrow\mathcal{P}_2(\Rd)$ converges, up to a subsequence, to a weak solution $\Tilde{\rho}$ of \eqref{eq:thin_film_intro}. Let us focus on two consecutive steps in the JKO scheme, $\rhot^{k}$ and $\rhot^{k+1}$, and consider the perturbation $\rho^\varepsilon = P^\varepsilon_\#\rhot^{k+1}$ given by $P^\varepsilon = \text{id} + \varepsilon\zeta$, where $\zeta$ is a vector field $\zeta\in C_c^\infty(\Rd;\Rd)$ and $\varepsilon\geq 0$. From the definition of the scheme we have
\begin{equation}
\label{eq: consistency 1}
    \frac{1}{2\tau}\left(\frac{\mw_2^2(\rhot^{k},\rho^\varepsilon)-\mw_2^2(\rhot^{k},\rhot^{k+1})}{\varepsilon}\right) + \frac{\mf_m[\rho^\varepsilon] - \mf_m[\rhot^{k+1}]}{\varepsilon}\geq 0. 
\end{equation}
As we want to let $\varepsilon\to0$ and recover the Euler--Lagrange equation of the minimisation problem~\eqref{eq:JKO}, we examine each term in \eqref{eq: consistency 1}.

\textbf{Step 1:} Wasserstein distance terms. We consider, in view of Brenier's Theorem, the optimal map $\mathcal{T}$ between $\rhot^{k}$ and $\rhot^{k+1}$ (see, e.g.,~\cite{Vil03, Vil09, San15}), so that
\begin{equation*}
    \mw_2^2(\rhot^k,\rhot^{k+1}) = \int_\Rd |x-\mathcal{T}(x)|^2\rhot^k(x)\,\dx\,.
\end{equation*}
Moreover, from the definition of the Wasserstein distance, we also have
\begin{align*}
     \mw_2^2(\rhot^k,\rho^\varepsilon) &\leq \int_\Rd |x-P^\varepsilon(\mathcal{T}(x))|^2\rhot^k(x)\,\dx \\
     & =\int_\Rd |x-\mathcal{T}(x) - \varepsilon\zeta(\mathcal{T}(x))|^2\rhot^k(x)\,\dx \\
     & =  \mw_2^2(\rhot^k,\rhot^{k+1}) - 2\varepsilon\int_\Rd (x-\mathcal{T}(x))\cdot\zeta(\mathcal{T}(x))\rhot^k(x)\,\dx + O(\varepsilon^2)\,.
\end{align*}
Consequently, 
\begin{equation}\label{eq: consistency wasserstein}
\begin{split}
 \frac{\mw_2^2(\rhot^{k},\rho^\varepsilon)-\mw_2^2(\rhot^{k},\rhot^{k+1})}{2\tau\varepsilon}
 \leq-\frac{1}{\tau}\int_\Rd (x-\mathcal{T}(x))\cdot\zeta(\mathcal{T}(x))\rhot^k(x)\,\dx + O(\varepsilon)\,.
\end{split}
\end{equation}

\textbf{Step 2:} Aggregation terms. We use the area formula \cite[Section 5.5]{AGS} and that $\det\nabla P^\varepsilon(x) = 1 + \varepsilon\dive\,\zeta(x) + O(\varepsilon^2)$. For the case $1<m < 2+\frac{2}{d}$ or $m = 2+\frac{2}{d}$ with subcritical mass, we obtain
\begin{align*}
    \int_\Rd (\rho^\varepsilon)^m\,\dx&=\int_\Rd \left(\frac{\rhot^{k+1}}{\det\nabla P^\varepsilon}\right)^m\det\nabla P^\varepsilon\,\dx\\&=\int_\Rd(\rhot^{k+1})^m\left(1 - \varepsilon(m-1)(\dive\,\zeta)+O(\varepsilon^2)\right)\,\dx\,.
\end{align*}
Thus, we find
\begin{align*}
    -\frac{1}{m-1}\int_\Rd \frac{(\rho^\varepsilon)^m-(\rhot^{k+1})^m}{\varepsilon}\,\dx =\int_\Rd (\rhot^{k+1})^m(\dive\,\zeta)\,\dx + O(\varepsilon)\,. 
\end{align*}
For the case $m=1$ we have that
\begin{align*}
    \int_\Rd \rho^\varepsilon \log (\rho^{\varepsilon} ) \, \dx & = \int_{\Rd} \frac{\rhot^{k+1}}{\det\nabla P^\varepsilon} \log \left(\frac{\rhot^{k+1}}{\det\nabla P^\varepsilon} \right) \det\nabla P^\varepsilon \, \dx \\
    & = \int_{\Rd} \rhot^{k+1} \log \rhot^{k+1} - \rhot^{k+1} \log \left( 1 + \varepsilon\dive\,\zeta + O(\varepsilon^2) \right) \, \dx.
\end{align*}
Therefore,
\begin{equation*}
    - \int_{\Rd} \frac{\rho^{\varepsilon} \log \rho^{\varepsilon} - \rhot^{k+1} \log \rhot^{k+1}}{\varepsilon} \, \dx = \int_{\Rd} \frac{\rhot^{k+1} \log \left( 1 + \varepsilon\dive\,\zeta + O(\varepsilon^2) \right)}{\varepsilon} \, \dx ,
\end{equation*}
and taking the limit in $\varepsilon$ we obtain,
\begin{equation*}
   - \lim_{\varepsilon \rightarrow 0}\int_{\Rd} \frac{\rho^{\varepsilon} \log \rho^{\varepsilon} - \rhot^{k+1} \log \rhot^{k+1}}{\varepsilon} \, \dx = \int_{\Rd} \rhot^{k+1} (\dive\,\zeta)  \, \dx .
\end{equation*}
In particular,
\begin{equation}\label{eq: consistency aggregation}
  -  \lim_{\varepsilon \rightarrow 0}  \frac{\me_m [\rho^{\varepsilon}]-\me_m [ \rhot^{k+1}]}{\varepsilon} = \int_\Rd (\rhot^{k+1})^m(\dive\,\zeta)\,\dx,
\end{equation}
holds for every $1 \leq m < 2+\frac{2}{d}$ or $m = 2+\frac{2}{d}$ with subcritical mass.

\textbf{Step 3:} Diffusion terms. We use the definition of push-forward and the area formula to obtain
\begin{align*}
    & \int_\Rd |\nabla P_\#\rhot^{k+1}(x)|^2\,\dx  = \int_\Rd \left|\nabla \left(\frac{\rhot^{k+1}}{\det \nabla P^\varepsilon}\circ (P^{\varepsilon})^{-1}\right)(x)\right|^2\,\dx 
    \\
    & \qquad =
    \int_\Rd \left|\nabla(P^\varepsilon)^{-1}(x)\nabla \left(\frac{\rhot^{k+1}}{\det \nabla P^\varepsilon}\right)\left((P^\varepsilon)^{-1}(x)\right)\right|^2\,\dx
    \\
    & \qquad =
    \int_\Rd \left|\nabla(P^\varepsilon)^{-1}(P^\varepsilon(x))\nabla \left(\frac{\rhot^{k+1}}{\det \nabla P^\varepsilon}\right)(x)\right|^2\left|\det\nabla P^\varepsilon(x)\right|\,\dx
    \\
    & \qquad =
    \int_\Rd \left|(\nabla P^\varepsilon(x))^{-1}\nabla \left(\frac{\rhot^{k+1}}{\det \nabla P^\varepsilon}\right)(x)\right|^2\left|\det\nabla P^\varepsilon(x)\right|\,\dx\,.
\end{align*}
Next, we observe that $(\nabla P^\varepsilon)^{-1} = \mathrm{I}_d - \varepsilon\nabla\zeta + O(\varepsilon^2)$, with $\mathrm{I}_d$ the identity matrix. Hence, we have
\begin{align*}
    & \int_\Rd |\nabla P_\#\rhot^{k+1}|^2\,\dx \\
    & = \int_\Rd \left|\nabla\rhot^{k+1} \!-\! \varepsilon(\rhot^{k+1}\nabla(\dive\,\zeta)+\nabla\zeta\nabla\rhot^{k+1}+\frac{1}{2}(\dive\,\zeta)\nabla\rhot^{k+1}) \right|^2\dx + O(\varepsilon^2),
\end{align*}
and, in particular,
\begin{align}\label{eq: consistency diffusion}
\begin{split}
   & \frac{1}{2}\int_\Rd\frac{|\nabla P_\#\rhot^{k+1}|^2-|\nabla\rhot^{k+1}|^2}{\varepsilon}\,\dx \qquad \\
   &= -\int_\Rd\!\!\left(\rhotkk\nabla(\dive\,\zeta)\!\cdot\!\nabla\rhot^{k+1}\! +\! (\nabla\zeta\nabla\rhot^{k+1})\!\cdot\!\nabla\rhot^{k+1}\!+\!\frac{1}{2}\dive\,\zeta|\nabla\rhot^{k+1}|^2\right)\!\dx\\ 
   &\quad+ O(\varepsilon).
\end{split}
\end{align}
\textbf{Step 4:} Letting $\varepsilon \rightarrow 0$. Let us perform again the same computation for $\varepsilon \leq 0$. Then, we consider $\zeta = \nabla \varphi$ and compute the limit $\varepsilon \rightarrow 0$. By taking into account~\eqref{eq: consistency wasserstein},~\eqref{eq: consistency aggregation}, and~\eqref{eq: consistency diffusion}, we have that,
\begin{align}\label{eq:weak formulation version 1}
\begin{split}
    & \frac{1}{\tau} \int_\Rd (x-\mathcal{T}(x))\cdot\nabla \varphi (\mathcal{T}(x))\rhot^k(x)\,\dx \\
    & =-\int_\Rd\!\!\left(\rhotkk\nabla(\Delta \varphi )\!\cdot\!\nabla \rhot^{k+1}\! +\! ( D^2\varphi \nabla\rhot^{k+1})\!\cdot\!\nabla\rhot^{k+1}\!+\! \frac{1}{2}\Delta \varphi |\nabla\rhot^{k+1}|^2\right)\!\dx \\
    & \quad + \chi\int_\Rd (\rhot^{k+1})^m\Delta \varphi \,\dx\,.
\end{split}
\end{align}
Next, we rewrite the left-hand side of \eqref{eq:weak formulation version 1} by considering a Taylor expansion of $\varphi$ on $\mathcal{T} (x)$. Since $\rhot$ is Holder continuous, \eqref{eq:equicontinuity on d2}, we have
\begin{equation*}
    \int_\Rd (x-\mathcal{T}(x))\cdot\nabla \varphi (\mathcal{T}(x))\rhot^k(x)\,\dx = \int_{\Rd} \varphi (x) \left[ \rhot^k(x) - \rhot^{k+1}(x) \right] \, \dx + O(\tau )\,.
\end{equation*}
Let $0 \leq s_1 < s_2 \leq T$ be fixed with,
\begin{equation*}
    h_1 = \left[ \frac{s_1}{\tau} \right] +1 \quad \text{and} \quad h_2 = \left[ \frac{s_2}{\tau} \right].
\end{equation*}
By summing with respect to $k$ in \eqref{eq:weak formulation version 1}, we obtain,
\begin{align*}
    &\int_{\Rd} \varphi (x) \rhot^{h_2+1}(x) \, \dx - \int_{\Rd} \varphi (x) \rhot^{h_1}(x) \, \dx +O(\tau) \\
    & = \sum_{j=h_1}^{h_2} \tau \int_\Rd\!\! \left(\rho_\tau^{j+1}\nabla(\Delta \varphi )\!\cdot\!\nabla\rhot^{j+1} \!+\! ( D^2\varphi \nabla\rhot^{j+1})\!\cdot\!\nabla\rhot^{j+1}\!+\! \frac{1}{2}\Delta \varphi |\nabla\rhot^{j+1}|^2\right)\!\dx \\
    & \quad - \chi\sum_{j=h_1}^{h_2} \tau \int_\Rd (\rhot^{j+1})^m\Delta \varphi \,\dx\,.
\end{align*}
Using the definition of the piecewise constant interpolation $\rho_{\tau}$ and integration by parts, cf.~Remark~\ref{rem:integration_part_weak_form}, this is equivalent to
\begin{align}\label{eq:Consistency tau limit}
    \begin{split}
        &\int_{\Rd} \varphi (x) \rhot (s_2, x) \, \dx - \int_{\Rd} \varphi (x) \rhot (s_1, x) \, \dx + O(\tau)
        \\
        & = \int_{s_1}^{s_2} \!\!\int_\Rd\!\left(\rho_\tau\nabla(\Delta \varphi)\!\cdot\!\nabla \rhot \!+\! ( D^2\varphi \nabla\rhot)\!\cdot\!\nabla\rhot\!+\!\frac{1}{2}\Delta \varphi |\nabla\rhot|^2\right)\dx\,\dt 
        \\
        & \quad - \chi\int_{s_1}^{s_2} \int_\Rd \rhot^m\Delta \varphi \,\dx \, \dt\\
        &=\!-\!\int_{s_1}^{s_2}\!\!\!\int_\Rd\left(\rho_\tau\Delta\rho_\tau\Delta\varphi+\Delta\rho_\tau\nabla\rho_\tau\cdot\nabla\varphi\right)\dx  \,\dt \!-\chi\! \int_{s_1}^{s_2}\!\! \int_\Rd \rhot^m\Delta \varphi \,\dx \, \dt.
    \end{split}
\end{align}
By combining~\Cref{H2 bound flow interchange},~\Cref{prop: strong convergence rho},~\Cref{prop: interpolation strong convergence gradient}, and \Cref{cor:higher integrability} we can pass to the limit in \eqref{eq:Consistency tau limit} as $\tau \rightarrow 0^+$, and recover a weak solution. 
\end{proof}

\begin{rem}\label{rem:integration_part_weak_form}
Assume $\rho\in H^2(\Rd)$ and $\varphi\in C_0^3(\Rd)$ --- this is indeed not a restriction as $\zeta\in C^\infty_c(\Rd;\Rd)$. Using integration by parts several times, we have
    \begin{align*}
       &\int_\Rd\left(\rho\nabla\rho\cdot\nabla(\Delta\varphi)+ \nabla\rho\cdot( D^2\varphi\nabla\rho)+\frac{1}{2}\Delta\varphi|\nabla\rho|^2\right)\,\dx
       \\
       & = - \int_\Rd\rho\Delta\rho\Delta\varphi\,\dx + \frac{1}{2}\int_\Rd\left( 2\nabla\rho\cdot( D^2\varphi\nabla\rho)-\Delta\varphi|\nabla\rho|^2\right)\,\dx
       \\
       & =- \int_\Rd\rho\Delta\rho\Delta\varphi\,\dx + \int_\Rd \left(\nabla\rho\cdot( D^2\varphi\nabla\rho)+\nabla\varphi\cdot( D^2\rho\nabla\rho)\right)\,\dx
       \\
       & =- \int_\Rd\left(\rho\Delta\rho\Delta\varphi+\Delta\rho\nabla\rho\cdot\nabla\varphi\right)\,\dx\,.
    \end{align*}
\end{rem}

\begin{rem}\label{rem:addition_external_potential}
	We observe that the addition of an external potential to the energy $\mf_m$, thus to~\eqref{eq:thin_film_intro}, even nonlocal, does not bring further difficulties to our strategy under minimal regularity assumptions. Indeed, the above proof can be integrated with previous results, e.g.~\cite{JKO98,MMS09}.
\end{rem}

\section{Extension to systems of two interacting species}\label{sec:existence_two_species}

In this section, we extend the one-species theory to study system~\eqref{eq:two species} and prove existence of weak solutions. First, we obtain some basic properties of the free energy functional, defined in~\eqref{eq:functiona_two_species}, we recall here for the reader's convenience:
\begin{equation*}
    \bm{\mf}[\rho,\eta] = \begin{cases}
      \Tilde{\bm{\mf}}[\rho,\eta] & \mbox{if } (\rho,\eta)\in\mathcal{P}^a(\mathbb{R}^d)^2,\, (\nabla\rho,\nabla\eta) \in L^2(\mathbb{R}^d)^2,
        \\
        +\infty & \mbox{otherwise,} 
    \end{cases}
\end{equation*}
being
\begin{equation*}
    \Tilde{\bm{\mf}}[\rho,\eta] = \int_{\Rd}\left(\frac{\kappa}{2}|\nabla\rho|^2+\frac{1}{2}|\nabla\eta|^2 + \alpha\nabla\rho\cdot\nabla\eta-\frac{\beta}{2}\rho^2-\frac{1}{2}\eta^2-\omega\rho\eta\right)\,\dx.
\end{equation*}
We remind the reader the parameters in the model are such that $\beta,\omega\in\mathbb{R}$ and the matrix 
\begin{equation*}
   A = \begin{pmatrix}
    \kappa & \alpha
    \\
    \alpha & 1
    \end{pmatrix}
\end{equation*}
is assumed to be positive definite.
\begin{prop}[Lower bound for the free energy and induced regularity]\label{prop: two species bound from below}
     Assume $(\rho,\eta) \in \mathcal{P}^a(\Rd)^2$. The following properties hold.
    \begin{enumerate}[label=$(\arabic*)$]
        \item \underline{Lower bound for the free energy}: let $\nabla \rho,\nabla\eta \in L^2 (\mathbb{R}^d)$, then $\bm{\mf}[\rho,\eta]$ is bounded from below as
        \begin{equation}\label{eq: energy bounded below two species}
         \bm{\mf}[\rho,\eta] \geq - C \left(\| \rho \|_{L^1(\mathbb{R}^d)}^{2}+\| \eta \|_{L^1(\mathbb{R}^d)}^{2}\right),
        \end{equation}
        where $C = C(\kappa,\alpha,\beta,\omega,d)> 0$.\\
        \item \underline{$H^1$-bound}: assume $\bm{\mf}[\rho,\eta] < +\infty$, then the following bound holds
        \begin{equation}\label{eq:grad is L2 two species}
            \| \nabla f \|^2_{L^2 (\Rd)} \leq C \left( \mf_2[f] + \| f \|_{L^1(\mathbb{R}^d)}^{2} \right),\quad\text{for } f\in\{\rho,\eta\}
        \end{equation}
        where $C=C(d)> 0$.  
        \item \underline{$L^p$-regularity}: assume $\bm{\mf}[\rho,\eta] < +\infty$, then $\rho,\eta \in L^{p} (\Rd)$ for $p\in[1,2^*]$. In particular,  there exists a constant $C = C(p,d,f)>0$ such that
         \begin{equation}\label{eq:control of the Lp norm two species}
            \| f \|_{L^p (\mathbb{R}^d)} \leq C<+\infty ,\quad\text{for } f\in\{\rho,\eta\}.
        \end{equation}
    \end{enumerate}
\end{prop}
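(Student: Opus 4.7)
The plan is to mimic the one-species argument in Proposition~3.1 (specifically the case $m=2$), with the additional twist that the gradient part now involves a cross term $\alpha\nabla\rho\cdot\nabla\eta$ that must be decoupled via positive definiteness of $A$, and the aggregation part contains an interspecies interaction $\omega\rho\eta$ that must be absorbed by the diagonal terms $\rho^2$ and $\eta^2$.

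\textbf{Step 1: Decoupling the gradient part.} Writing the gradient contribution as
\begin{equation*}
\tfrac{1}{2}\int_{\Rd}\bigl(\kappa|\nabla\rho|^2+|\nabla\eta|^2+2\alpha\nabla\rho\cdot\nabla\eta\bigr)\dx = \tfrac{1}{2}\int_{\Rd}\bigl\langle A(\nabla\rho,\nabla\eta)^T,(\nabla\rho,\nabla\eta)^T\bigr\rangle\,\dx,
\end{equation*}
and using that $A$ is positive definite with smallest eigenvalue $\lambda_A>0$, this is bounded below by $\tfrac{\lambda_A}{2}(\|\nabla\rho\|_{L^2}^2+\|\nabla\eta\|_{L^2}^2)$. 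Similarly, using Young's inequality $|\omega\rho\eta|\leq \tfrac{|\omega|}{2}(\rho^2+\eta^2)$, the aggregation part is bounded below by $-\tfrac{1}{2}(|\beta|+|\omega|)\|\rho\|_{L^2}^2-\tfrac{1}{2}(1+|\omega|)\|\eta\|_{L^2}^2$. Hence
\begin{equation*}
\bm{\mf}[\rho,\eta]\geq \tfrac{\lambda_A}{2}\bigl(\|\nabla\rho\|_{L^2}^2+\|\nabla\eta\|_{L^2}^2\bigr)-C\bigl(\|\rho\|_{L^2}^2+\|\eta\|_{L^2}^2\bigr).
\end{equation*}

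\textbf{Step 2: Applying Gagliardo--Nirenberg and Young.} As in the proof of Proposition~3.1 with $m=2$, the Gagliardo--Nirenberg inequality gives $\|f\|_{L^2}^2\leq C\|\nabla f\|_{L^2}^{2\theta}\|f\|_{L^1}^{2(1-\theta)}$ for $f\in\{\rho,\eta\}$ with $\theta=\frac{d}{d+2}\in(0,1)$. Using Young's inequality with a small parameter $\varepsilon>0$ chosen so that the resulting coefficient of $\|\nabla f\|_{L^2}^2$ is strictly smaller than $\tfrac{\lambda_A}{4}$, one absorbs the negative $L^2$ contributions into the positive definite gradient contribution and obtains
\begin{equation*}
\bm{\mf}[\rho,\eta]\geq \tfrac{\lambda_A}{4}\bigl(\|\nabla\rho\|_{L^2}^2+\|\nabla\eta\|_{L^2}^2\bigr)-C\bigl(\|\rho\|_{L^1}^{2}+\|\eta\|_{L^1}^{2}\bigr),
\end{equation*}
which proves \eqref{eq: energy bounded below two species} after dropping the nonnegative gradient term.

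\textbf{Step 3: $H^1$-bound and $L^p$-regularity.} The previous display rearranges to
\begin{equation*}
\|\nabla\rho\|_{L^2}^2+\|\nabla\eta\|_{L^2}^2 \leq \tfrac{4}{\lambda_A}\bigl(\bm{\mf}[\rho,\eta]+C(\|\rho\|_{L^1}^{2}+\|\eta\|_{L^1}^{2})\bigr),
\end{equation*}
which controls each summand separately since both are nonnegative, yielding \eqref{eq:grad is L2 two species} (comparing with $\mf_2[f]\geq 0$ is immediate since $\mf_2$ of a single density is majorised by $\bm{\mf}$ up to the coupling terms, or one simply repeats the argument species by species). Finally, $L^p$-regularity for $p\in[1,2^*]$ follows exactly as in Step~3 of the proof of Proposition~3.1, by applying Gagliardo--Nirenberg
\begin{equation*}
\|f\|_{L^p}\leq C\|\nabla f\|_{L^2}^{\theta}\|f\|_{L^1}^{1-\theta},\qquad \theta=\tfrac{2d}{d+2}\tfrac{p-1}{p}\in[0,1],
\end{equation*}
to each of $\rho$ and $\eta$ separately. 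The main --- and really the only --- nontrivial ingredient compared to the one-species setting is the use of positive definiteness of $A$ to decouple the cross-gradient term $\alpha\nabla\rho\cdot\nabla\eta$; once this is done the argument is essentially diagonal and reduces to two copies of the $m=2$ case already handled.
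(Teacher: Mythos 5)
Your proof is correct and follows essentially the same route as the paper: the paper decouples the cross terms via Young's inequality with $\varepsilon\in(|\alpha|,\kappa/|\alpha|)$ (equivalent to your smallest-eigenvalue bound on $A$) to obtain $\bm{\mf}[\rho,\eta]\ge C(\mf_2[\rho]+\mf_2[\eta])$ and then cites the one-species $m=2$ results, whereas you re-derive that case inline with Gagliardo--Nirenberg and Young. The only minor slip is the parenthetical claim that $\mf_2[f]\ge 0$, which is false in general ($\mf_2[f]$ is only bounded below by $-C\|f\|_{L^1(\Rd)}^2$), but your displayed $H^1$ estimate already carries the argument, so nothing is lost.
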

\begin{proof}

\textbf{Step 1:} Lower bound for the free energy. By using Cauchy--Schwarz and Young inequalities we obtain
\begin{align*}
     {\bm{\mf}}[\rho,\eta] &=\int_{\Rd}\left(\frac{\kappa}{2}|\nabla\rho|^2+\frac{1}{2}|\nabla\eta|^2 + \alpha\nabla\rho\cdot\nabla\eta-\frac{\beta}{2}\rho^2-\frac{1}{2}\eta^2-\omega\rho\eta\right)\,\dx
     \\
     & \geq \!\!\int_{\Rd}\left(\frac{\kappa}{2}|\nabla\rho|^2+\frac{1}{2}|\nabla\eta|^2 - |\alpha||\nabla\rho||\nabla\eta|-\frac{\beta+|\omega|}{2}\rho^2-\frac{1 + |\omega|}{2}\eta^2\right)\dx
     \\
     & \geq\!\! \int_{\Rd}\left(\frac{\kappa - |\alpha|\varepsilon}{2}|\nabla\rho|^2\!+\!\frac{1 - |\alpha|\varepsilon^{-1}}{2}|\nabla\eta|^2 \!-\!\frac{\beta+|\omega|}{2}\rho^2\!-\!\frac{1 + |\omega|}{2}\eta^2\right)\dx.
\end{align*}
Since the matrix $A$ is positive definite, $\kappa - \alpha^2>0$ we can choose $\varepsilon\in (|\alpha| , \frac{\kappa}{|\alpha|})$ so that $1-|\alpha|\varepsilon^{-1}>0$ and $\kappa - |\alpha|\varepsilon>0$. Hence, we obtain
\begin{align}\label{eq: bound two one species}
     {\bm{\mf}}[\rho,\eta] & \geq C\left(\mf_2[\rho] + \mf_2[\eta]\right),
\end{align}
for $C = C(\kappa,\alpha,\beta,\omega)>0$ and the result follows from the one-species case \eqref{eq:Free energy bounded below}.

\textbf{Step 2:} $H^1$-bound and $L^p$-regularity. Given $\bm{\mf}[\rho,\eta] < +\infty$, then  \eqref{eq: bound two one species} implies $\mf_2[\rho],\mf_2[\eta]<+\infty$. The results follow from the one-species case \eqref{eq:grad rho is L2}, \eqref{eq:control of the Lp norm}.
\end{proof}

\subsection{The JKO scheme} 

Analogously to the problem for the one-species case, we can use the JKO scheme to construct an approximation to a candidate of a solution.

\begin{rem}
For the sake of completeness we specify the notation for the $2$-Wasserstein distance in the product space. Let $\sigma_1 = (\rho_1,\eta_1)\in\mathcal{P}_2(\Rd)^2$ and $\sigma_2 = (\rho_2,\eta_2)\in\mathcal{P}_2(\Rd)^2$. The $2$-Wasserstein distance between $\sigma_1$ and $\sigma_2$ is denoted as
    \begin{equation}
        \dW^2(\sigma_1,\sigma_2) = \mw_2^2(\rho_1,\rho_2) + \mw_2^2(\eta_1,\eta_2)\,.
        \label{eq: distance two species}
    \end{equation}
Furthermore, note that for $\sigma = (\rho,\eta)\in\mathcal{P}_2(\Rd)^2$, $\mathrm{m}_2(\sigma)=\mathrm{m}_2(\rho)+\mathrm{m}_2(\eta)$.
\end{rem}

As in the one-species case, we consider the following recursive scheme, for $\sigma_0\in\mP_2(\Rd)^2$.  
    \begin{itemize}
        \item Let $\tau>0$ and set $\sigma_\tau^0 := \sigma_0 = (\rho_0,\eta_0)$.
        \item Given $\sigma_\tau^k = (\rho_\tau^k,\eta_\tau^k)\in\mP(\Rd)^2$ for $k\geq0$, choose
        \begin{equation}\label{eq:JKO two species}
            \sigma_\tau^{k+1} =(\rho_\tau^{k+1},\eta_\tau^{k+1})\in\argmin_{\sigma\in \mathcal{P}(\Rd)^2} \left\lbrace\frac{\dW^2(\sigma,\sigma_\tau^k)}{2\tau} + \bm{\mf}[\sigma] \right\rbrace\,.
        \end{equation}
    \end{itemize}

We start checking that the scheme~\eqref{eq:JKO two species} is well-defined. Let us fix $\bar{\sigma} = (\bar{\rho}, \bar{\eta})\in\mP_2^a(\Rd)^2$ and define the functional 
    \begin{align*}
	\begin{split}
		\bm{\JKOstep}\colon& \mathcal{P} (\mathbb{R}^d)^2 \longrightarrow  \overline{\mathbb{R}} \\
		& \quad \, \,\sigma \quad  \longmapsto \, \, \frac{\dW^2(\sigma, \overline{\sigma})}{2 \tau} + \bm{\mf}[\sigma].
	\end{split}
    \end{align*}

    \begin{prop}\label{prop: minimisers two species}
        Let $\overline{\sigma} \in \mathcal{P}_2^a(\Rd)^2$. The functional $\bm{\JKOstep}$ admits a minimiser in the set $\left\lbrace \sigma = (\rho, \eta) \in \mathcal{P}^a(\Rd)^2  \, : \, \nabla \rho, \nabla \eta \in L^2(\Rd) \right\rbrace$.
    \end{prop}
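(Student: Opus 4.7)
The plan is to apply the direct method of the calculus of variations, mirroring as closely as possible the argument of Proposition~\ref{prop:Existence minimiser}, and invoking the positive-definiteness of $A$ only where strictly needed, namely to control the coupling in the gradient part.

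First, I would verify that $\bm{\JKOstep}$ is bounded from below and extract a minimising sequence $\sigma_n = (\rho_n, \eta_n)$. Since $\rho_n, \eta_n \in \mathcal{P}^a(\Rd)$ have unit mass, bound \eqref{eq: energy bounded below two species} gives $\bm{\mf}[\sigma_n] \geq -2C$, and in particular $\bm{\JKOstep}$ is bounded from below. The minimising property, combined with this lower bound, yields $d_W^2(\sigma_n, \bar{\sigma}) \leq C\tau$, from which follow uniform bounds on $\mathrm{m}_2(\rho_n)$ and $\mathrm{m}_2(\eta_n)$, exactly as in \eqref{eq:second moment bdd}. Applying \eqref{eq:grad is L2 two species} and \eqref{eq:control of the Lp norm two species} then produces uniform $H^1$ and $L^p$ estimates on each component for $p \in [1, 2^*]$.

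Next, I would extract a compactness subsequence. Each of $\rho_n$ and $\eta_n$ lies in a set analogous to $\mathcal{H}_2$ from Step~2a of the proof of Proposition~\ref{prop:Existence minimiser}. The Kolmogorov--Riesz--Fr\'echet criterion, applied component-wise and passing to a common subsequence, yields strong convergence
\[
\rho_n \to \rho, \qquad \eta_n \to \eta \qquad \text{in } L^2(\Rd),
\]
to limits $\rho, \eta \in \mathcal{P}^a(\Rd)$. Banach--Alaoglu then gives $\nabla \rho_n \rightharpoonup \nabla \rho$ and $\nabla \eta_n \rightharpoonup \nabla \eta$ weakly in $L^2(\Rd)$, the limits being identified by testing against smooth compactly supported vector fields.

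Finally, I would establish lower semicontinuity along this subsequence. Strong $L^2$ convergence handles the mass interaction terms $-\tfrac{\beta}{2}\int \rho^2 - \tfrac{1}{2}\int \eta^2 - \omega \int \rho\eta$, while lower semicontinuity of $d_W^2(\cdot, \bar{\sigma})$ with respect to narrow convergence handles the Wasserstein term. The genuinely new difficulty is the gradient cross term $\alpha \int \nabla \rho \cdot \nabla \eta$, which under weak $L^2$ convergence alone is neither lower nor upper semicontinuous. The key observation --- and the only place the positive-definiteness of $A$ is crucial --- is that the full quadratic gradient part
\[
Q[\nabla\rho, \nabla\eta] := \int_\Rd \left( \frac{\kappa}{2}|\nabla\rho|^2 + \frac{1}{2}|\nabla\eta|^2 + \alpha \nabla\rho \cdot \nabla\eta \right) \dx
\]
is a convex, continuous quadratic form on $L^2(\Rd)^2$ precisely because $A \succ 0$, hence it is weakly lower semicontinuous on $L^2(\Rd)^2$. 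Combining these three ingredients yields $\bm{\JKOstep}[\sigma] \leq \liminf_n \bm{\JKOstep}[\sigma_n]$, so the Weierstrass criterion produces a minimiser $\sigma = (\rho, \eta)$, with the regularity $\nabla \rho, \nabla \eta \in L^2(\Rd)$ forced by finiteness of the energy at the minimum.
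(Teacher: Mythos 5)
Your proposal is correct and follows essentially the same route as the paper: direct method, uniform second-moment/$H^1$/$L^p$ bounds on the minimising sequence, strong $L^2$ compactness of the densities via Kolmogorov--Riesz--Fr\'echet, weak $L^2$ convergence of the gradients, and the Weierstrass criterion. The only (immaterial) difference is how the weak lower semicontinuity of the coupled gradient term is justified: you invoke convexity and continuity of the quadratic form associated with the positive definite matrix $A$, whereas the paper makes the same point concretely by completing the square, writing $\tfrac{\kappa}{2}|\nabla\rho_n|^2+\tfrac12|\nabla\eta_n|^2+\alpha\nabla\rho_n\cdot\nabla\eta_n=\tfrac{\kappa-\alpha^2}{2}|\nabla\rho_n|^2+\tfrac{\alpha^2}{2}\left|\nabla(\rho_n+\alpha^{-1}\eta_n)\right|^2$ and applying lower semicontinuity of each square.
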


    Again, we employ the direct method of calculus of variations and the results from the one-species case, cf. \Cref{prop:Existence minimiser}. 

    \begin{proof}
        \textbf{Step 1:} Boundedness from below. Analogously to \Cref{prop:Existence minimiser} we note that 
        \begin{equation*}
            \bm{\JKOstep} [\sigma] \geq C.
        \end{equation*}
        This ensures that we can consider a minimising sequence $\{\sigma_n\}_n$, where $\sigma_n = (\rho_n,\eta_n)$,   satisfying:
        \begin{equation*}
            \mathrm{m}_2 (\rho_n) + \mathrm{m}_2 (\eta_n) \leq CT(1 + \mathrm{m}_2 (\overline{\rho}) + \mathrm{m}_2 (\overline{\eta})).
        \end{equation*}

        \textbf{Step 2:} $\bm{\JKOstep}$ is lower semicontinuous. Repeating the argument in ~\Cref{prop:Existence minimiser} we know that, up to a subsequence,
        \begin{subequations}\label{eq:lsc convergence quadratic terms}
        \begin{align}
                \nabla \rho_n \rightharpoonup \nabla \rho & \quad \text{and} \quad \nabla \eta_n \rightharpoonup \nabla \eta  &&  \text{in } L^2(\Rd), &&&\\
                \rho_n \rightarrow \rho & \quad \text{and} \quad \eta_n \rightarrow \eta &&  \text{in } L^2(\Rd).&&&
        \end{align}
        \end{subequations}
        Next, we write
        \begin{equation*}
            \nabla\rho_n\cdot\nabla\eta_n = \frac{\alpha}{2}\left|\nabla(\rho_n + \alpha^{-1}\eta_n)\right|^2 - \frac{\alpha}{2}|\nabla\rho_n|^2- \frac{1}{2\alpha}|\nabla\eta_n|^2.
        \end{equation*}
        Note that $\rho_n+\alpha^{-1}\eta_n\rightarrow\rho + \alpha^{-1}\eta$ and also $\nabla(\rho_n+\alpha^{-1}\eta_n)\rightharpoonup \nabla\rho + \alpha^{-1}\nabla\eta$ in $L^2(\Rd)$. By using the lower semicontinuity of the $H^1$ seminorm and that $\kappa-\alpha^2>0$, we obtain
        \begin{align*}
            &\liminf_{n\rightarrow+\infty}\int_\Rd\left(\frac{\kappa}{2}|\nabla\rho_n|^2+\frac{1}{2}|\nabla\eta_n|^2 + \alpha\nabla\rho_n\cdot\nabla\eta_n\right)\,\dx 
            \\ 
            & = \liminf_{n\rightarrow+\infty}\int_\Rd\left(\frac{\kappa-\alpha^2}{2}|\nabla\rho_n|^2
            +\frac{\alpha^2}{2}\left|\nabla(\rho_n + \alpha^{-1}\eta_n)\right|^2\right)\,\dx 
            \\
            & \geq 
            \int_\Rd\left(\frac{\kappa-\alpha^2}{2}|\nabla\rho|^2
            +\frac{\alpha^2}{2}\left|\nabla(\rho + \alpha^{-1}\eta)\right|^2\right)\,\dx 
            \\
            & =
            \int_\Rd\left(\frac{\kappa}{2}|\nabla\rho|^2+\frac{1}{2}|\nabla\eta|^2 + \alpha\nabla\rho\cdot\nabla\eta\right)\,\dx\,.
        \end{align*}
        In order to deal with the other terms involved in the free energy, the quadratic terms follow from the convergence \eqref{eq:lsc convergence quadratic terms}. In order to deal with the last term, we now claim that
        \begin{equation*}
            \rho_n \eta_n \rightarrow \rho \eta \quad \text{in } L^1(\Rd).
        \end{equation*}
        This follows from
        \begin{align*}
            \| \rho_n \eta_n - \rho \eta \|_{L^1(\Rd)} & \leq \| \eta_n (\rho - \rho_n) \|_{L^1(\Rd)} + \| \rho (\eta - \eta_n) \|_{L^1(\Rd)} \\
            & \leq \| \eta_n \|_{L^2(\Rd)} \| \rho - \rho_n \|_{L^2(\Rd)} + \|\rho \|_{L^2(\Rd)} \|\eta- \eta_n \|_{L^2(\Rd)} \\
            & \rightarrow 0\,.
        \end{align*}

          \textbf{Step 3:} Existence of minimisers follows then from the Weierstrass criterion, cf.~e.g.~\cite[Box 1.1]{San15}.
       
    \end{proof}

   Let $T>0$, and consider $N:=\left[\frac{T}{\tau}\right]$. We define the curve $\sigma_\tau: [0, T ] \rightarrow \mathcal{P} (\mathbb{R}^d)^2$ as the piecewise constant interpolation
\begin{equation}\label{eq:piecewise interpolation two species}
    \sigma_\tau(t) = \sigma_\tau^k,\quad t\in \left((k-1) \tau , k \tau \right],
\end{equation}
where $\sigma_\tau^k = (\rho_\tau^k,\eta_\tau^k)$ is defined in $\eqref{eq:JKO two species}$. In the following, we prove the two-species analogous of \Cref{lem: interpolation narrow convergence}, \Cref{prop: weak convergence interpolation}, and \Cref{prop: strong convergence rho}.

\begin{lem}[Narrow convergence \&  discrete uniform estimates]\label{lem: interpolation narrow convergence two species}
  Let $\sigma_0\in\mathcal{P}_2^a(\Rd)^2$ such that $\bm{\mf}[\sigma_0]<+\infty$. There exists an absolutely continuous curve $\tilde{\sigma} : [0,T] \rightarrow \mathcal{P}_2 (\mathbb{R}^d)^2$
    such that, up to a subsequence, $\sigma_\tau(t)$ narrowly converges to $\Tilde{\sigma}(t)$, uniformly in $t \in [0,T]$.

\noindent Moreover, we obtain the following discrete uniform bounds:
    \begin{align}
& \sup_k\Vert\nabla\rho_\tau^k\Vert_{L^2(\Rd)}+\sup_k\Vert\nabla\eta_\tau^k\Vert_{L^2(\Rd)} \leq C_1 < +\infty\,,
        \label{eq: discrete H1 two species}
\\
& \sup_k\Vert\rho_\tau^k\Vert_{L^p(\Rd)} + \sup_k\Vert\eta_\tau^k\Vert_{L^p(\Rd)} \leq C_2 < +\infty\,,
 \label{eq: discrete Lp two species}
\\
 &       \mathrm{m_2}(\sigmat (t))\leq 2\mathrm{m_2}(\sigma_0) + 4T\left(\bm\mf[\sigma_0]+C\right),
         \label{eq:uniform second order moment bound two species}
    \end{align}
     for $p\in[1,2^*]$ and constants $C_1>0$ and $C_2>0$, independent of $k$ and $\tau$.
\end{lem}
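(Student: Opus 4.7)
The proof proposal is to follow verbatim the structure of \Cref{lem: interpolation narrow convergence}, replacing $\mf_m$ by $\bm{\mf}$, the one-species Wasserstein distance by $\dW$ on the product space, and invoking \Cref{prop: two species bound from below} in place of \Cref{prop:Some properties}. The energy monotonicity built into the scheme~\eqref{eq:JKO two species} gives, by testing with $\sigma_\tau^k$ itself as a competitor for $\sigma_\tau^{k+1}$,
\begin{equation*}
\bm{\mf}[\sigma_\tau^{k+1}] \leq \frac{\dW^2(\sigma_\tau^{k+1},\sigma_\tau^k)}{2\tau} + \bm{\mf}[\sigma_\tau^{k+1}] \leq \bm{\mf}[\sigma_\tau^k],
\end{equation*}
hence $\sup_k \bm{\mf}[\sigma_\tau^k] \leq \bm{\mf}[\sigma_0] < +\infty$. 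Combining this with the $H^1$-bound~\eqref{eq:grad is L2 two species} and the $L^p$-regularity~\eqref{eq:control of the Lp norm two species} of \Cref{prop: two species bound from below} immediately yields the two discrete uniform bounds~\eqref{eq: discrete H1 two species} and~\eqref{eq: discrete Lp two species}.

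Next, I would sum the inequality above from $k=i+1$ to $k=j$ and use the lower bound~\eqref{eq: energy bounded below two species} (noting that the $L^1$ norms are fixed by mass preservation) to get
\begin{equation*}
\sum_{k=i+1}^{j} \frac{\dW^2(\sigma_\tau^k, \sigma_\tau^{k-1})}{2\tau} \leq \bm{\mf}[\sigma_\tau^i] - \bm{\mf}[\sigma_\tau^j] \leq \bm{\mf}[\sigma_0] + C.
\end{equation*}
Combining this with Cauchy--Schwarz, as in~\eqref{eq:equicontinuity on d2}, provides $\frac{1}{2}$-Hölder equi-continuity of $\sigma_\tau$ with respect to $\dW$ up to an error of order $\sqrt{\tau}$, and in particular $\dW^2(\sigma_0,\sigma_\tau(t)) \leq 2T(\bm{\mf}[\sigma_0] + C)$. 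Using $\mathrm{m}_2(\sigma_\tau(t)) \leq 2 \mathrm{m}_2(\sigma_0) + 2 \dW^2(\sigma_0,\sigma_\tau(t))$ gives~\eqref{eq:uniform second order moment bound two species}.

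Finally, to get the narrow convergence, I would apply the refined Ascoli--Arzelà Theorem~\cite[Proposition 3.3.1]{AGS} componentwise on the product space $\mathcal{P}_2(\Rd)^2$ equipped with $\dW$. The uniform second-moment bound ensures the tightness condition on each time-slice family, and the equi-continuity estimate provides the required modulus of continuity, so that $\sigma_\tau$ admits a subsequence narrowly converging, uniformly on $[0,T]$, to a limiting absolutely continuous curve $\tilde{\sigma}:[0,T]\to\mathcal{P}_2(\Rd)^2$. Lower semicontinuity of $\mathrm{m}_2$ then transfers the second-moment bound to the limit.

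I do not anticipate a genuine obstacle: the only place where the vectorial structure could matter is in the monotonicity step, but since $\bm{\mf}$ depends only on the pair $\sigma = (\rho,\eta)$ and the scheme~\eqref{eq:JKO two species} is formulated on the product space with the compatible distance~\eqref{eq: distance two species}, everything decouples in the estimates except the free-energy bound, which is already handled by \Cref{prop: two species bound from below} exploiting positive definiteness of $A$. Consequently the proof is essentially a transcription of the one-species argument with the obvious substitutions.
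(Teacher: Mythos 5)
Your proposal is correct and follows essentially the same route as the paper's own proof: energy monotonicity of the scheme combined with Proposition~\ref{prop: two species bound from below} for the discrete bounds, a telescoping sum plus Cauchy--Schwarz for the H\"older equi-continuity and second-moment estimate, and the refined Ascoli--Arzel\`a theorem for the narrow convergence. No gaps.
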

\begin{proof}
    The proof works analogously to the one from \Cref{lem: interpolation narrow convergence}. By construction of the sequence we obtain that,
    \begin{equation}\label{eq:bound free energy JKO two species}
        \bm\mf [\sigmat^k] \leq \frac{d_W^2(\sigmatk, \sigmat^{k-1})}{2 \tau} + \bm\mf[\sigmatk] \leq \bm\mf[\sigmat^{k-1}],
    \end{equation}
    and, in particular,
    \begin{equation*}
        \sup_k \bm\mf [\sigmatk] \leq \bm\mf [\sigma_0] < + \infty.
    \end{equation*}
    This combined with \eqref{eq:grad is L2 two species} and \eqref{eq:control of the Lp norm two species} implies that $\| \nabla \rhotk \|_{L^2(\Rd)}$, $\| \nabla \etatk \|_{L^2(\Rd)}$, and $\| \rhotk \|_{L^p(\Rd)}$, $\|\etatk \|_{L^p (\Rd)}$ are uniformly bounded in $k$ and $\tau$ for $p \in [1, 2^{\ast}]$. From here we recover \eqref{eq: discrete H1 two species} and \eqref{eq: discrete Lp two species}.

    Summing up over $k$ in \eqref{eq:bound free energy JKO two species}, we obtain that
    \begin{equation}\label{eq:equicontinuity 1st part two species}
        \sum_{k=i+1}^j \frac{d_W^2(\sigmatk,\sigmat^{k-1})}{2 \tau} \leq \bm\mf[\sigmat^i] - \bm\mf [\sigmat^j] \leq \bm\mf [\sigma_0] + C,
    \end{equation}
    where the last inequality holds because the free energy is bounded from below from \eqref{eq: energy bounded below two species}. 
    Therefore, the distance $d_W$ between $\sigma_0$ and $\sigmat (t)$ is uniformly bounded, as for $t \in ((j-1) \tau, j \tau]$,
    \begin{equation*}
        d_W^2 (\sigma_0, \sigmat (t)) \leq j \sum_{k=1}^j d_W^2 (\sigmat^k, \sigmat^{k-1}) \leq 2 j \tau (\bm\mf[\sigma_0] + C) \leq 2T(\bm\mf [\sigma_0] + C).
    \end{equation*}
    Furthermore, this last inequality combined with the triangular inequality for the $2$-Wasserstein distance gives us that second order moments are uniformly bounded on compact time intervals $[0,T]$:
    \begin{align*}
        \mathrm{m_2}(\sigmat(t)) & \leq 2\mathrm{m_2}(\sigma_0) + 2 d_W^2(\sigma_0 , \sigmat (t))  \leq 2\mathrm{m_2}(\sigma_0) +  4T\left(\bm\mf[\sigma_0]+C\right).
    \end{align*}
    We can now prove equicontinuity. Consider $0 \leq s < t \leq T$ such that $s \in ((i-1)\tau, i \tau ]$ and $t \in ((j-1) \tau, j \tau ]$. Then, combining Cauchy--Schwarz inequality with \eqref{eq:equicontinuity 1st part two species} we have,
\begin{align}
        d_W (\sigmat (s), \sigmat (t) ) & \leq \sum_{k=i+1}^j d_W(\sigmatk,\sigmat^{k-1})
       \nonumber \\  
        &\leq \left( \sum_{k=i+1}^j d_W^2(\sigmatk,\sigmat^{k-1}) \right)^{\frac{1}{2}} |j-i |^{\frac{1}{2}} 
        \nonumber\\
        & \leq (2 (\bm\mf[\sigma_0] + C))^{\frac{1}{2}} \left( \sqrt{|t-s|} + \sqrt{\tau} \right).\label{eq:equicontinuity on d2 two species}
    \end{align}
    From here we obtain that $\sigma_{\tau}$ is $\frac{1}{2}$-Holder equi-continuous up to a negligible error of order $\sqrt{\tau}$. Thus, using the refined version of the Ascoli-Arzelà Theorem \cite[Proposition 3.3.1]{AGS}, it follows that $\sigmat$ admits a subsequence narrowly converging to a limit $\Tilde{\sigma} = (\Tilde{\rho}, \Tilde{\eta}) \in \mathcal{P}(\Rd)^2$ as $\tau \rightarrow 0^+$, uniformly on $[0,T]$. Furthermore, using that $| \cdot |^2$ is lower semicontinuous and the uniform bound \eqref{eq:uniform second order moment bound two species}, we obtain that the limiting curve $\Tilde{\sigma}$ is such that,
    \begin{equation*}
        \mathrm{m_2} (\Tilde{\sigma} (t) ) \leq \liminf_{\tau \downarrow 0} \mathrm{m_2} (\sigmat (t) )\leq C\,.\qedhere
    \end{equation*}
\end{proof}

\begin{prop}[Weak convergence]\label{prop: weak convergence interpolation two species} Let $\sigma_0\in\mathcal{P}_2^a(\Rd)^2$ such that $\bm{\mf}[\sigma_0]<+\infty$. The piecewise interpolation $\sigma_{\tau}$ constructed in \eqref{eq:piecewise interpolation two species} is such that $\sigma_\tau\in L^\infty([0,T];H^1(\Rd))^2$. In particular, the limit $\tilde{\sigma}$ belongs to $L^\infty([0,T];H^1(\Rd))^2$ and
\begin{equation*}
    \sigma_\tau\rightharpoonup \Tilde{\sigma}\quad\mbox{in } L^2([0,T];H^1(\Rd))^2.
\end{equation*}
\end{prop}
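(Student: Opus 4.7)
The plan is to adapt the one-species argument (Proposition on weak convergence in \Cref{sec:existence_one_species}) component-wise, since the two-species interpolation $\sigmat=(\rhot,\etat)$ is piecewise constant and the previous lemma supplies matching pointwise-in-$k$ bounds. I would work out each component separately and then combine them at the end.

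First, from the discrete $L^p$ bound \eqref{eq: discrete Lp two species}, and the fact that $\sigmat(t)=\sigmatk$ on $((k-1)\tau,k\tau]$, I would obtain
\begin{equation*}
\Vert\rhot\Vert_{L^\infty([0,T];L^p(\Rd))}=\sup_k\Vert\rhotk\Vert_{L^p(\Rd)}\le C_2,\qquad \Vert\etat\Vert_{L^\infty([0,T];L^p(\Rd))}\le C_2,
\end{equation*}
for every $p\in[1,2^{\ast}]$, with $C_2$ independent of $\tau$. Similarly, from \eqref{eq: discrete H1 two species} I would get
\begin{equation*}
\Vert\nabla\rhot\Vert_{L^\infty([0,T];L^2(\Rd))}+\Vert\nabla\etat\Vert_{L^\infty([0,T];L^2(\Rd))}\le C_1.
\end{equation*}
Taking $p=2\in[1,2^\ast]$, these two estimates together give a uniform-in-$\tau$ bound $\Vert\sigmat\Vert_{L^\infty([0,T];H^1(\Rd))^2}\le C$.

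Next, since $[0,T]$ is a bounded time interval, the continuous embedding $L^\infty([0,T];H^1(\Rd))\hookrightarrow L^2([0,T];H^1(\Rd))$ yields $\Vert\sigmat\Vert_{L^2([0,T];H^1(\Rd))^2}\le C\sqrt{T}$, still uniformly in $\tau$. I then apply the Banach--Alaoglu Theorem on the reflexive Hilbert space $L^2([0,T];H^1(\Rd))^2$ to extract a (not relabelled) subsequence along which $\sigmat$ converges weakly to some $\bar\sigma\in L^2([0,T];H^1(\Rd))^2$. To identify $\bar\sigma$ with $\Tilde\sigma$ from \Cref{lem: interpolation narrow convergence two species}, I would test against arbitrary $\varphi\in C_c^\infty((0,T)\times\Rd)$ and use the uniform-in-$t$ narrow convergence $\sigmat(t)\rightharpoonup\Tilde\sigma(t)$ combined with dominated convergence: both limits must coincide as distributions, hence $\bar\sigma=\Tilde\sigma$.

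Finally, regularity of $\Tilde\sigma$ follows from standard lower semicontinuity: since $L^\infty([0,T];H^1(\Rd))^2$ is the dual of a separable Banach space, weak-$\ast$ compactness and the lower semicontinuity of the $H^1$-norm under narrow convergence (for each fixed $t$) give $\Tilde\sigma\in L^\infty([0,T];H^1(\Rd))^2$, with norm controlled by the uniform bound on $\sigmat$. I do not foresee a serious obstacle here; the argument is a direct transcription of the one-species case, and the only mildly delicate point is justifying that the weak $L^2([0,T];H^1)$ limit coincides with the pointwise narrow limit, which is settled by testing against smooth compactly supported functions as described.
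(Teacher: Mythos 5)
Your argument is correct and follows essentially the same route as the paper: read off the uniform $L^\infty([0,T];H^1(\Rd))$ bound from the discrete estimates \eqref{eq: discrete H1 two species} and \eqref{eq: discrete Lp two species} via the piecewise-constant structure, embed into $L^2([0,T];H^1(\Rd))$ on the bounded time interval, and apply Banach--Alaoglu, identifying the weak limit with $\Tilde{\sigma}$ through the narrow convergence of \Cref{lem: interpolation narrow convergence two species}. The paper's proof is a terser version of exactly this, so no further comment is needed.
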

\begin{proof}
    From \eqref{eq: discrete H1 two species} in \Cref{lem: interpolation narrow convergence two species} we have
    \begin{equation*}
        \Vert\rho_\tau \Vert_{L^\infty([0,T];H^1(\Rd))} = \sup_{t\in(0,T)}\Vert\rho_\tau(t)\Vert_{L^2(\Rd)} = \sup_k\Vert\rho_\tau^k\Vert_{H^1(\Rd)}<+\infty\,,
    \end{equation*}
    and analogously for $\eta_\tau$.
In particular, for any compact time interval $[0,T]$ with $T>0$, we have $\|\rho_\tau\|_{L^2([0,T];H^1(\Rd))}+\|\eta_\tau\|_{L^2([0,T];H^1(\Rd))}\le C$ uniformly in $\tau$ and the weak convergence follows from Banach-Alaoglu Theorem. Regularity of the limit follows from standard arguments.
\end{proof}
\begin{prop}[Strong convergence of $\sigma_\tau$]\label{prop: strong convergence rho two species} Let $\sigma_0\in\mathcal{P}_2^a(\Rd)$ such that $\bm\mf[\sigma_0]<+\infty$. The sequence $\sigma_\tau:[0,T]\rightarrow\mathcal{P}_2(\Rd)^2$ converges, up to a subsequence, strongly to the curve $\Tilde{\sigma}$ 
in $L^2([0,T]; L^2(\Rd))^2$
for every $T > 0$.
\end{prop}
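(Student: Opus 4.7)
The strategy is a direct adaptation of \Cref{prop: strong convergence rho} to the product setting: apply the Rossi--Savar\'e compactness result (\Cref{prop:RS03}) to each component, or equivalently in the product space. Because all the ingredients are componentwise, the cleanest route is to run the one-species argument twice, for $\rho_\tau$ and $\eta_\tau$ separately, and then combine.

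More precisely, take $X = L^2(\Rd)$, $g = \mw_2$, and the functional $\mathcal{I}:L^2(\Rd) \to [0,+\infty]$ defined by
\begin{equation*}
    \mathcal{I}[f] = \begin{cases} \Vert f\Vert_{H^1(\Rd)}^2 + \int_\Rd |x|^2 f(x)\,\dx & \text{if } f \in \mP_2(\Rd)\cap H^1(\Rd),\\ +\infty & \text{otherwise.}\end{cases}
\end{equation*}
As in \Cref{prop: strong convergence rho}, $\mathcal{I}$ is lower semicontinuous with relatively compact sublevels in $L^2(\Rd)$ by the Kolmogorov--Riesz--Fr\'echet theorem, and $\mw_2$ is a lower semicontinuous pseudo-distance that separates points on the proper domain of $\mathcal{I}$. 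I would then verify the tightness condition \eqref{tightness} for the family $U_1 := \{\rho_\tau\}_\tau$ using the second moment bound \eqref{eq:uniform second order moment bound two species} together with the uniform $L^\infty([0,T];H^1(\Rd))$ bound provided by \Cref{prop: weak convergence interpolation two species}, and likewise for $U_2 := \{\eta_\tau\}_\tau$.

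For the integral equi-continuity condition \eqref{integral equicontinuity}, I would use the $\tfrac{1}{2}$-H\"older equi-continuity established in \eqref{eq:equicontinuity on d2 two species}, and crucially the fact that
\begin{equation*}
    \mw_2(\rho_\tau(s),\rho_\tau(t)) \leq d_W(\sigma_\tau(s),\sigma_\tau(t)), \qquad \mw_2(\eta_\tau(s),\eta_\tau(t)) \leq d_W(\sigma_\tau(s),\sigma_\tau(t)),
\end{equation*}
which is immediate from the definition \eqref{eq: distance two species}. This gives, for $h > \tau$,
\begin{equation*}
    \int_0^{T-h} \mw_2(\rho_\tau(t+h),\rho_\tau(t))\,\dt \leq C\int_0^{T-h} (\sqrt{h}+\sqrt{\tau})\,\dt \leq 2CT\sqrt{h},
\end{equation*}
with the analogous bound (and the corresponding $h<\tau$ bound via the triangle inequality and \eqref{eq:equicontinuity 1st part two species}) as in the one-species case. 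Both conditions are symmetric in the two components.

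Applying \Cref{prop:RS03} to $U_1$ and $U_2$ (extracting a common subsequence via a diagonal procedure) produces, up to subsequence, convergence in measure of $\rho_\tau \to \tilde\rho$ and $\eta_\tau \to \tilde\eta$ in $L^2(\Rd)$ for almost every $t\in(0,T)$. The final step is to upgrade the convergence in measure to strong $L^2([0,T];L^2(\Rd))^2$ convergence: owing to the uniform bounds $\Vert\rho_\tau\Vert_{L^\infty([0,T];L^p(\Rd))},\Vert\eta_\tau\Vert_{L^\infty([0,T];L^p(\Rd))} \le C$ from \eqref{eq: discrete Lp two species} with $p>2$ (e.g.\ $p$ slightly above $2$), one argues exactly as in the last paragraph of the proof of \Cref{prop: strong convergence rho}, bounding $\int_{A_\delta(\tau)} \Vert\rho_\tau - \tilde\rho\Vert^2_{L^2(\Rd)}\,\dt$ by a quantity vanishing with $\tau$ plus $\delta T^{1/2}$, and letting $\delta\downarrow 0$; summing this estimate for $\rho_\tau$ and $\eta_\tau$ gives the result in the product space. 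The main (but mild) obstacle is simply the bookkeeping between the joint pseudo-distance $d_W$ on $\mathcal{P}_2(\Rd)^2$ and its $\mw_2$ projections, which is resolved by the elementary inequality above.
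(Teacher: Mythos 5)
Your proposal is correct and follows essentially the same route as the paper: the only cosmetic difference is that the paper applies \Cref{prop:RS03} once in the product space $X=L^2(\Rd)^2$ with $g=d_W$ and the sum functional $\bm{\mathcal{I}}[\rho,\eta]$, whereas you apply it componentwise with $g=\mw_2$ and diagonalise to a common subsequence; the two are interchangeable since $\mw_2\le d_W$ on each factor and the tightness, equi-continuity, and final upgrade from convergence in measure to strong $L^2([0,T];L^2(\Rd))$ convergence are verified from exactly the same estimates in both versions.
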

\begin{proof}
We apply \Cref{prop:RS03} to a subset $U = \{\sigma_\tau\}_{\tau\geq 0}$ for $X = L^2(\Rd)^2$ ang $g:=d_W$ defined in \eqref{eq: distance two species}. Similarly to the one-species case, we consider the functional $ \bm{\mathcal{I}}:L^2(\Rd)^2\rightarrow [0,+\infty]$ defined by
\begin{equation*}
    \bm{\mathcal{I}}[\rho,\eta]=\begin{cases}\Vert\rho\Vert_{H^1(\Rd)}^2\!+\!\Vert\eta\Vert_{H^1(\Rd)}^2\!+\!\mathrm{m}_2(\rho)\!+\!\mathrm{m}_2(\eta)&\rho,\eta\in\mathcal{P}_2(\mathbb{R}^d)\cap H^1(\Rd),\\
+\infty & \mbox{otherwise}.
\end{cases}
\end{equation*}
Note that $d_W$ is a distance on the proper domain of $\bm{\mathcal{I}}$. Indeed, given $\sigma = (\rho,\eta)$, if $\bm{\mathcal{I}}[\sigma]<+\infty$ then $\sigma\in\mathcal{P}_2(\mathbb{R}^d)^2$. As in \Cref{prop: strong convergence rho}, the functional $\bm{\mathcal{I}}$ is lower semicontinuous from standard arguments \cite{BE22} and has relatively compact subsets from Kolmogorov-Riesz-Fr\'echet Theorem \cite[Corollary 4.27]{Bre11}.

Proving that $\bm{\mathcal{I}}$ and $d_W$ satisfy the tightness and integral equicontinuity conditions in \Cref{prop:RS03} can be done as in the one-species case by using arguments analogous to those in \Cref{prop: strong convergence rho}. Tightness follows from the uniform-in-$\tau$ second order moment and $L^{\infty}([0,T]; H^1(\Rd ))$ bounds for $\sigmatk$ given in \Cref{lem: interpolation narrow convergence two species}. Equi-continuity is a consequence from the H\"older equi-continuity of $\sigmat$ proved in \Cref{lem: interpolation narrow convergence two species}.
\end{proof}

\subsection{Flow interchange}

As in the one-species case we can obtain $H^2$ bounds for $\rho$ and $\eta$ using the flow interchange technique. In order to do so, we consider the decoupled system of heat equations as an auxiliary flow,
\begin{align}\label{eq:Decouple heat equation}
    \begin{split}
        \begin{dcases}
            \partial_t\mu_1= \Delta \mu_1\,, \\
            \partial_t\mu_2 = \Delta \mu_2\,,
        \end{dcases}
    \end{split}
\end{align}
and the auxiliary functional,
\begin{align*}
    \bm\me [\mu_1, \mu_2] = \begin{dcases}
        \int_{\Rd} [\mu_1  \log \mu_1  + \mu_2  \log \mu_2 ] \,\dx , \quad & \mu_1 \log \mu_1, \, \mu_2 \log \mu_2 \in L^1(\Rd); \\
        + \infty & \text{otherwise}.
    \end{dcases}
\end{align*}
For any $\mu = (\mu_1, \mu_2) \in \mathcal{P}_2 (\Rd)^2$ such that $\bm\me[\mu] < \infty$, we denote by $S_{\bm\me}^t \mu := (S_{\bm\me}^t \mu_1 , S_{\bm\me}^t \mu_2)$ the solution at time $t>0$ to system \eqref{eq:Decouple heat equation} for an initial value $\mu$. Furthermore, we define the dissipation of $\bm\mf$ along the flow $S_{\bm\me}$ as
\begin{equation*}
    D_{\bm\me} \bm\mf [\sigma] := \limsup_{s \downarrow 0} \left\lbrace \frac{\bm\mf[\sigma] - \bm\mf[S_{\bm\me}^s\sigma]}{s} \right\rbrace,
\end{equation*}
where $\sigma$ denotes $\sigma := (\rho ,\eta) \in \mathcal{P}(\Rd)^2$. 

\begin{lem}[$H^2$ uniform bound]\label{H2 bound flow interchange two species}
     Let $\sigma_0$ such that $\bm{\mf}[\sigma_0]<+\infty$. The piecewise interpolation $\sigma_{\tau}$ in~\eqref{eq:piecewise interpolation two species} is such that $\sigma_\tau  \in L^2([0,T];H^2(\Rd))^2$. In particular, we obtain the uniform  bound
    \begin{equation*}
        \| D^2 \rho_{\tau} \|_{L^2 ([0,T];L^2(\Rd))}^2 + \| D^2 \eta_{\tau} \|_{L^2 ([0,T];L^2(\Rd))}^2  \leq C,
    \end{equation*}
    where $C>0$ is independent of $\tau$.
\end{lem}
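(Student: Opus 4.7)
The plan is to adapt the one-species flow interchange of Lemma \ref{H2 bound flow interchange} to the product setting, with the decoupled heat system \eqref{eq:Decouple heat equation} as the auxiliary flow and $\bm{\me}$ as the auxiliary functional. Since $d_W^2$ and $\bm{\me}$ both decompose as sums over the two components and each component of $S_{\bm{\me}}^t$ is the classical heat semigroup, a direct check shows that $S_{\bm{\me}}^t$ is a $0$-flow for $\bm{\me}$ with respect to $d_W$ in the sense of Definition \ref{def:lambda_flow}. Comparing the minimiser $\sigmatkk$ with $S_{\bm{\me}}^s\sigmatkk$ in the scheme \eqref{eq:JKO two species} and passing to $\limsup$ as $s\downarrow 0$ exactly as in \eqref{eq:Apply EVI}, I would obtain
\[
\tau\, D_{\bm{\me}}\bm{\mf}[\sigmatkk] \leq \bm{\me}[\sigmatk] - \bm{\me}[\sigmatkk].
\]

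Next, I would compute the dissipation of $\bm{\mf}$ along the auxiliary flow. Writing $(\rho_z, \eta_z) := S_{\bm{\me}}^z\sigmatkk$ and integrating by parts (justified by the smoothing of the heat kernel, cf.~Remark \ref{rem: HE}), one finds
\begin{align*}
-\tfrac{d}{dz}\bm{\mf}[(\rho_z, \eta_z)]
&= \int_{\Rd}\!\left[\kappa(\Delta\rho_z)^2 + 2\alpha\,\Delta\rho_z\,\Delta\eta_z + (\Delta\eta_z)^2\right]\dx \\
&\quad - \int_{\Rd}\!\left[\beta|\nabla\rho_z|^2 + |\nabla\eta_z|^2 + 2\omega\,\nabla\rho_z\cdot\nabla\eta_z\right]\dx.
\end{align*}
Completing the square rewrites the leading integrand as $(\kappa-\alpha^2)(\Delta\rho_z)^2 + (\alpha\Delta\rho_z + \Delta\eta_z)^2$; by positive definiteness of $A$ (so that $\kappa>0$ and $\kappa-\alpha^2>0$), this is bounded below by $c_A(\|\Delta\rho_z\|_{L^2}^2 + \|\Delta\eta_z\|_{L^2}^2)$ for some $c_A>0$. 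The second integral is bounded in absolute value by $C(\|\nabla\rho_z\|_{L^2}^2 + \|\nabla\eta_z\|_{L^2}^2)$ with $C=C(\beta,\omega)$; since the heat semigroup is $L^2$-contractive on the gradient, the discrete uniform bound \eqref{eq: discrete H1 two species} controls this uniformly in $k$, $\tau$, and $z$.

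After integrating in $z$ over $(0,s)$, dividing by $s$, and passing to $\liminf_{s\downarrow 0}$ (applying the argument of \eqref{eq:Hessian and Laplacian} separately to the two non-negative quadratic functionals produced by the square completion, each governed by a heat equation with $L^2(\Rd)$ initial data), I would recover
\[
\tau\, c_A\!\left(\|\Delta\rhotkk\|_{L^2}^2 + \|\Delta\etatkk\|_{L^2}^2\right) \leq \bm{\me}[\sigmatk] - \bm{\me}[\sigmatkk] + C\tau,
\]
with $C$ uniform in $k$ and $\tau$. Summing over $k = 0, \dots, N-1$ telescopes the entropy difference; then using $x\log x \leq x^2$ to bound $\bm{\me}[\sigma_0]$ and the lower bound of Remark \ref{rem:lower bound rho log rho} on $\bm{\me}[\sigmat^N]$ together with the second moment bound \eqref{eq:uniform second order moment bound two species} delivers the desired $L^2([0,T];L^2(\Rd))$ bound on $\Delta\rhot$ and $\Delta\etat$. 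Plancherel's identity (cf.~\eqref{HE: hessian laplacian}) then promotes this to the stated $L^2$ bound on $D^2\rhot$ and $D^2\etat$.

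The main obstacle I anticipate is the $\liminf$ passage through the cross term $\int \Delta\rho_z\,\Delta\eta_z\,\dx$, which is not lower semicontinuous for a product of two weakly convergent sequences. The square completion sidesteps this precisely by rewriting the leading form as a sum of two non-negative quadratic functionals on which weak lower semicontinuity applies component by component. Aside from this, the argument is a direct two-species adaptation of Lemma \ref{H2 bound flow interchange}; the absence of any nonlinear self-interaction like $\rho^m$ in $\bm{\mf}$ removes the need for the Young-inequality step required in the one-species proof, making the lower-order terms here trivially controlled by the uniform $H^1$ estimate.
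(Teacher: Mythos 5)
Your proposal is correct and follows essentially the same route as the paper: flow interchange with the decoupled heat system, the EVI comparison, the dissipation computation, componentwise weak lower semicontinuity of the $H^1$ seminorm, and telescoping of the entropy. The only (minor) difference is that you handle the cross term $2\alpha\,\Delta\rho_z\,\Delta\eta_z$ by completing the square using positive definiteness of $A$, whereas the paper splits it with Young's inequality choosing $\varepsilon\in\bigl(|\alpha|,\kappa/|\alpha|\bigr)$; both reduce to the same hypothesis $\kappa-\alpha^2>0$ and both produce a sum of non-negative quadratic functionals to which the lower semicontinuity argument applies term by term.
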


\begin{proof} 
   We proceed analogously to the one-species case. Note that $\sigma_\tau\in L^2([0,T];H^1(\Rd))^2$ by \Cref{prop: weak convergence interpolation two species}. For all $s > 0$, we consider $S_{\bm\me}^s \sigma_{\tau}^{k+1} = (S_{\bm\me}^s \rho_{\tau}^{k+1},S_{\bm\me}^s \eta_{\tau}^{k+1})$.
   Then, by the definition of the scheme \eqref{eq:JKO} and of $\sigma_{\tau}^{k+1}$, we have the inequality
    \begin{equation*}
        \frac{1}{2 \tau} d_W^2 (\sigma_{\tau}^k, \sigma_{\tau}^{k+1} ) + \bm{\mf}[\sigma_{\tau}^{k+1}] \leq \frac{1}{2 \tau} d_W^2 (\sigma_{\tau}^k , S_{\bm\me}^s \sigma_{\tau}^{k+1} ) + \bm{\mf} [S_{\bm\me}^s \sigma_{\tau}^{k+1}],
    \end{equation*}
    from which we obtain,
    \begin{equation*}
        \tau \frac{\bm{\mf} [\sigma_{\tau}^{k+1}] - \bm{\mf} [S_{\bm\me}^s \sigma_{\tau}^{k+1}]}{s} \leq \frac{1}{2} \frac{d_W^2 (\sigma_{\tau}^k, S_{\bm\me}^s \sigma_{\tau}^{k+1}) - d_W^2 (\sigma_{\tau}^k, \sigma_{\tau}^{k+1})}{s}.
    \end{equation*}
    By taking the $\limsup$ as $s \downarrow 0$ and considering the definition of the distance $d_W$, we obtain
    \begin{equation}\label{eq:Apply EVI two species}
        \tau D_{\bm\me} \bm\mf[\sigma_{\tau}^{k+1}] \leq \frac{1}{2}\left.\frac{\mathrm{d}^+}{\mathrm{d}t}\right|_{t=0}  d_W^2 (\sigma_{\tau}^k , S_{\bm\me}^t \sigma_{\tau}^{k+1} )  \leq \bm\me[\sigma_{\tau}^k] - \bm\me[\sigma_{\tau}^{k+1}],
    \end{equation}
    where in the last inequality we use the \eqref{eq:EVI}, as $S_{\bm\me}$ is a $0$-flow, cf.~Definition \ref{def:lambda_flow}. 
    
    The dissipation of $\bm\mf$ along the flow $S_{\bm\me}$ can be written as
    \begin{align}\label{eq:compute DE two species}
        \begin{split}
            D_{\bm\me} \bm\mf [\sigma_{\tau}^{k+1}] & = \limsup_{s \downarrow 0} \left\lbrace \frac{\bm\mf [\sigma_{\tau}^{k+1}] - \bm\mf [S_{\bm\me}^s \sigma_{\tau}^{k+1}]}{s} \right\rbrace \\
            & = \limsup_{s \downarrow 0} \int_0^1 \left( - \left.\frac{\mathrm{d
            }}{\mathrm{d}z}\right|_{z=st} \bm\mf[S_{\bm\me}^z \sigma_{\tau}^{k+1}] \right) \, \mathrm{d}t.
        \end{split}
    \end{align}
    Let us calculate the time derivative:
     \begin{align*}
            \ddt \bm\mf[S_{\bm\me}^t \sigma_{\tau}^{k+1}] \!=\! & -\! \int\! \left(\kappa| \Delta S_{\bm\me}^t \rho_{\tau}^{k+1}|^2\! +\! | \Delta S_{\bm\me}^t \eta_{\tau}^{k+1} |^2\!+\!2\alpha \Delta S_{\bm\me}^t \rho_{\tau}^{k+1}\Delta S_{\bm\me}^t \eta_{\tau}^{k+1} \right)\dx 
            \\
            &+\!\!\int\!\! \!\left(\beta| \nabla S_{\bm\me}^t \rho_{\tau}^{k+1}|^2 \! + \! | \nabla S_{\bm\me}^t \eta_{\tau}^{k+1} |^2\!+\!2\omega \nabla S_{\bm\me}^t \rho_{\tau}^{k+1}\cdot\nabla S_{\bm\me}^t \eta_{\tau}^{k+1} \right)\!\dx.
    \end{align*}
    By applying Young's inequality, we obtain
    \begin{align}
\label{eq:Heat flow interchange two species}
        \begin{split}
           - \ddt \bm\mf[S_{\bm\me}^t \sigma_{\tau}^{k+1}] & \geq  \int_{\Rd} \left((\kappa\!-\!|\alpha|\varepsilon)| \Delta S_{\bm\me}^t \rho_{\tau}^{k+1}|^2 \! + \! (1\!-\!|\alpha|\varepsilon^{-1})| \Delta S_{\bm\me}^t \eta_{\tau}^{k+1} |^2 \right)\dx 
            \\
            &-\int_{\Rd}\! \left((\beta\!+\!|\omega|)| \nabla S_{\bm\me}^t \rho_{\tau}^{k+1}|^2 \! +\! (1\!+\!|\omega|)| \nabla S_{\bm\me}^t \eta_{\tau}^{k+1} |^2\right)\dx,
       \end{split}
    \end{align}
    where $\varepsilon$ can be chosen such that $\kappa - |\alpha|\varepsilon>0$ and $1-|\alpha|\varepsilon^{-1}>0$.
    Therefore, combining \eqref{eq:Apply EVI two species}, \eqref{eq:compute DE two species} and \eqref{eq:Heat flow interchange two species} we obtain
    \begin{align*}
        & \tau \liminf_{s \downarrow 0} \int_0^1 \int_{\Rd} \left((\kappa-|\alpha|\varepsilon)| \Delta S_{\bm\me}^{st} \rho_{\tau}^{k+1}|^2 + (1-|\alpha|\varepsilon^{-1})| \Delta S_{\bm\me}^{st}  \eta_{\tau}^{k+1} |^2 \right)\,\dx   \, \mathrm{d}t 
        \\ &\qquad \leq \tau \limsup_{s \downarrow 0} \int_0^1\int_{\Rd} \left((\beta+|\omega|)| \nabla S_{\bm\me}^{st}  \rho_{\tau}^{k+1}|^2 + (1+|\omega|)| \nabla S_{\bm\me}^{st} \eta_{\tau}^{k+1} |^2\right)\dx  \mathrm{d}t
        \\ 
        & \qquad +  \bm\me[\sigma_{\tau}^k] - \bm\me[\sigma_{\tau}^{k+1}].
    \end{align*}
    Next, we recognize $\nabla S_{\bm\me}^{st}\sigma_\tau^{k+1}$ as the solution of the system of heat equations with initial data $\nabla\sigma_\tau^{k+1}\in L^2(\Rd)^2$. Hence, $\nabla S_{\bm\me}^{st}\sigma_\tau^{k+1}\rightarrow\nabla\sigma_\tau^{k+1}$ in $L^2(\Rd)^2$ as $s\downarrow 0$. In particular,
    \begin{align*}
        \limsup_{s \downarrow 0} \int_0^1\int_{\Rd} | \nabla S_{\bm\me}^{st}  \sigma_{\tau}^{k+1}|^2\,\dx\,\dt = \int_{\Rd} | \nabla \sigma_{\tau}^{k+1}|^2\,\dx
    \end{align*}
    Moreover, by well-known properties of the heat equation and the weak lower semicontinuity of the $H^1$ seminorm we have
    \begin{align*}
        &\liminf_{s \downarrow 0} \int_0^1\int_\Rd |\Delta S_{\bm\me}^{st} \rhot^{k+1}|^2 + |\Delta S_{\bm\me}^{st}\eta_\tau^{k+1}|^2\,\dx\,\dt \\
        & \qquad = \liminf_{s \downarrow 0} \int_0^1\int_\Rd | D^2 S_{\bm\me}^{st}\rho_\tau^{k+1}|^2+| D^2 S_{\bm\me}^{st}\eta_\tau^{k+1}|^2  \,\dx\,\dt
        \\
        & \qquad \geq \int_\Rd | D^2\rho_\tau^{k+1}|^2 + | D^2\eta_\tau^{k+1}|^2 \,\dx\,.
    \end{align*}
    Thus we have found
    \begin{align*}
        &\tau\Vert D^2\rho_\tau^{k+1}\Vert^2_{L^2(\Rd)}+\tau\Vert D^2\eta_\tau^{k+1}\Vert^2_{L^2(\Rd)} \\
        &\quad\leq C\left(\bm\me[\sigma_{\tau}^k] - \bm\me[\sigma_{\tau}^{k+1}]\right)
         + C\left(\tau\Vert\nabla\rho_\tau^{k+1}\Vert^2_{L^2(\Rd)}+\tau\Vert\nabla\eta_\tau^{k+1}\Vert^2_{L^2(\Rd)}\right),
    \end{align*}
    for a constant $C = C(\kappa,\alpha,\beta,\omega)$ independent of $\tau$.  By summing up over $k$ from $0$ to $N-1$ we obtain the desired $H^2$ bound by using~\Cref{lem: interpolation narrow convergence two species} since we have:
    \begin{align*}
        &\Vert D^2\rho_\tau\Vert^2_{L^2 ([0,T];L^2(\Rd))}+\Vert D^2\eta_\tau\Vert^2_{L^2 ([0,T];L^2(\Rd))}\\
        &\quad\leq C\left(\bm\me[\sigma_{0}] - \bm\me[\sigma_{\tau}^{N}]\right)+ C\left(\Vert\nabla\rho_\tau\Vert^2_{L^2 ([0,T];L^2(\Rd))}+\Vert\nabla\eta_\tau\Vert^2_{L^2 ([0,T];L^2(\Rd))}\right). 
    \end{align*}
    \end{proof}

The obtained $H^2$ bound allows us to obtain a two-species analogous of \Cref{prop: interpolation strong convergence gradient}. 
\begin{prop}[Strong convergence of $\nabla\sigma_\tau$]\label{prop: interpolation strong convergence gradient two species} Let $\sigma_0$ such that $\bm\mf[\sigma_0]<+\infty$. Up to a subsequence, the sequence $\sigma_\tau:[0,T]\rightarrow\mathcal{P}_2(\Rd)^2$ converges strongly to the curve $\Tilde{\sigma}$ in $L^2([0,T]; H^1(\Rd))^2$. 
\end{prop}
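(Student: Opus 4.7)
The plan is to mimic the strategy of the one-species case (\Cref{prop: interpolation strong convergence gradient}) component by component, since the extra coupling of the system only enters through the joint $H^2$ estimate already established in \Cref{H2 bound flow interchange two species}.

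First, from \Cref{H2 bound flow interchange two species} we have $\sigma_\tau\in L^2([0,T];H^2(\Rd))^2$ with a uniform-in-$\tau$ bound on $\|D^2\rho_\tau\|_{L^2([0,T];L^2(\Rd))}$ and $\|D^2\eta_\tau\|_{L^2([0,T];L^2(\Rd))}$. Combined with Banach--Alaoglu, this produces, up to a subsequence, weak limits $D^2\rho_\tau\rightharpoonup D^2\Tilde\rho$ and $D^2\eta_\tau\rightharpoonup D^2\Tilde\eta$ in $L^2([0,T];L^2(\Rd))$. The identification of these limits with the Hessians of $\Tilde\rho$ and $\Tilde\eta$ proceeds exactly as in the one-species case: pair against a test function in $C_c^\infty([0,T]\times\Rd)$, integrate by parts, and use the weak $L^2$ convergence $\nabla\rho_\tau\rightharpoonup\nabla\Tilde\rho$ and $\nabla\eta_\tau\rightharpoonup\nabla\Tilde\eta$ given by \Cref{prop: weak convergence interpolation two species}.

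Next, to upgrade from weak to strong convergence in $L^2([0,T];H^1(\Rd))^2$, the key is the Gagliardo--Nirenberg interpolation argument already used in \Cref{prop: interpolation strong convergence gradient}. Applying it componentwise to $f_\tau\in\{\rho_\tau,\eta_\tau\}$ and the corresponding limit $\Tilde f\in\{\Tilde\rho,\Tilde\eta\}$ yields
\begin{align*}
\int_0^T\|\nabla f_\tau(t)-\nabla\Tilde f(t)\|_{L^2(\Rd)}^2\,\mathrm{d}t
&\le C\int_0^T\|D^2 f_\tau(t)-D^2\Tilde f(t)\|_{L^2(\Rd)}\|f_\tau(t)-\Tilde f(t)\|_{L^2(\Rd)}\,\mathrm{d}t\\
&\le C\|D^2 f_\tau-D^2\Tilde f\|_{L^2([0,T];L^2(\Rd))}\,\|f_\tau-\Tilde f\|_{L^2([0,T];L^2(\Rd))},
\end{align*}
after Cauchy--Schwarz in time. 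The first factor is uniformly bounded in $\tau$ by \Cref{H2 bound flow interchange two species} (applied both to $f_\tau$ and, by lower semicontinuity, to $\Tilde f$), while the second factor vanishes as $\tau\downarrow 0$ thanks to \Cref{prop: strong convergence rho two species}.

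Summing the two resulting bounds (for $\rho_\tau$ and $\eta_\tau$) and combining with the already established $L^2([0,T];L^2(\Rd))^2$ strong convergence of $\sigma_\tau$ gives the desired convergence in $L^2([0,T];H^1(\Rd))^2$. No genuinely new obstacle appears here: since the heat-flow auxiliary functional $\bm\me$ decouples the two species and the coupling in $\bm\mf$ has already been absorbed into the joint $H^2$ estimate via Young's inequality with the spectral condition $\kappa-\alpha^2>0$, the componentwise interpolation goes through verbatim. The only point requiring a little care is ensuring the subsequence used in \Cref{prop: strong convergence rho two species} coincides with the one for which $D^2\sigma_\tau$ converges weakly, which is handled by a standard diagonal extraction.
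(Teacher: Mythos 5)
Your proposal is correct and follows essentially the same route as the paper, which simply applies the one-species argument of \Cref{prop: interpolation strong convergence gradient} componentwise to $\rho_\tau$ and $\eta_\tau$ using the joint $H^2$ bound of \Cref{H2 bound flow interchange two species}. You have merely written out in full the details (weak convergence and identification of the Hessians, the Gagliardo--Nirenberg interpolation with Cauchy--Schwarz in time, and the diagonal extraction) that the paper leaves implicit.
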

\begin{proof}
    The result follows by applying \Cref{prop: interpolation strong convergence gradient} to $\nabla\rho_\tau$ and $\nabla\eta_\tau$ together with the uniform $H^2$ bound derived in \Cref{H2 bound flow interchange two species}.
\end{proof}

\subsection{Consistency of the scheme}
Now we are ready to prove that $\Tilde{\sigma} = (\Tilde{\rho} , \Tilde{\eta} )$ is a weak solution of the problem \eqref{eq:two species} in the sense of \Cref{def:Weak solution two species}. This subsection completes the proof of Theorem \ref{thm:main_result_existence two species}.

\begin{proof}[Proof of Theorem~\ref{thm:main_result_existence two species}]
We prove the theorem by showing that the sequence $\sigma_\tau:[0,T]\rightarrow\mathcal{P}_2(\Rd)^2$ converges, up to a subsequence, to a weak solution $\Tilde{\sigma}$ of \eqref{eq:two species}.
    We will only prove the consistency for the first equation \eqref{eq: two species rho}. The case \eqref{eq: two species eta} will work analogously. Let us fix two consecutive steps in the JKO scheme $\sigmatk = (\rhotk, \etatk)$, $\sigmatkk = (\rhotkk, \etatkk )$, and consider the perturbation $\sigma^{\varepsilon} = ( \rho^{\varepsilon}, \etatkk)$ where $\rho^{\varepsilon} = P^\varepsilon_\#\rhot^{k+1}$ given by $P^\varepsilon = \text{id} + \varepsilon \zeta$, where $\zeta$ is a vector field $\zeta\in C_c^\infty(\Rd;\Rd)$, and $\varepsilon\geq 0$. By applying the definition of the scheme we obtain,
    \begin{equation}\label{eq:consistency two species 1}
        \frac{1}{2\tau} \left( \frac{d_W^2 (\sigmatk, \sigma^{\varepsilon})- d_W^2 (\sigmatk, \sigmatkk)}{\varepsilon} \right) + \frac{\bm\mf[\sigma^{\varepsilon}] - \bm\mf[\sigmatkk]}{\varepsilon} \geq 0.
    \end{equation}
    We proceed now to analyse each one of the terms in \eqref{eq:consistency two species 1}.

    \textbf{Step 1:} Wasserstein distance terms. We first realise that 
    \begin{equation}\label{eq:Consistency wasserstein two species}
        \frac{d_W^2 (\sigmatk, \sigma^{\varepsilon})- d_W^2 (\sigmatk, \sigmatkk)}{2 \tau \varepsilon} = \frac{\mw_2^2 (\rhotk , \rho^{\varepsilon})- \mw_2^2 (\rhotk, \rhotkk )}{2 \tau \varepsilon}.
    \end{equation}
    Therefore, Step 1 of the proof of \Cref{thm:main_result_existence} applies to this case. Let $\mathcal{T}$ be the optimal map between $\rhotk$ and $\rhotkk$, then
    \begin{equation*}
        \frac{d_W^2 (\sigmatk, \sigma^{\varepsilon})- d_W^2 (\sigmatk, \sigmatkk)}{2 \tau \varepsilon} \leq - \frac{1}{\tau} \int_{\Rd} (x - \mathcal{T} (x)) \cdot \zeta (\mathcal{T} (x)) \rhotk (x) \dx + O(\varepsilon ).
    \end{equation*}

    \textbf{Step 2:} Self-aggregation and self-diffusion terms. As in the one-species case; cf.~\Cref{thm:main_result_existence}, we have
    \begin{equation}\label{eq:Consistency diffusion two species}
     - \int_{\Rd} \frac{(\rho^{\varepsilon})^2 - (\rhotkk)^2}{\varepsilon} = \int_{\Rd} (\rhotkk)^2 (\dive \,\zeta ) \,\dx + O(\varepsilon)
    \end{equation}    
    and
    \begin{align}\label{eq:Consistency aggregation two species}
        \begin{split}
            & \frac{1}{2}\int_\Rd\frac{|\nabla \rho^{\varepsilon}|^2-|\nabla\rhot^{k+1}|^2}{\varepsilon}\,\dx \qquad\\
            & \quad = - \int_\Rd \Big(\rhotkk \nabla(\dive\, \zeta) \cdot \nabla \rhot^{k+1} + (\nabla \zeta \nabla\rhot^{k+1}) \cdot \nabla\rhot^{k+1} \\
            & \quad \qquad  \qquad+ \frac{1}{2}\dive\,\zeta |\nabla\rhot^{k+1}|^2 \Big)\,\dx + O(\varepsilon).
        \end{split}
    \end{align}

    \textbf{Step 3:} Cross-interaction terms.  For the second-order term we use the area formula to obtain,
      \begin{align}\label{eq:Consistency cross-interaction 1 two species}
      \begin{split}
        \int_\Rd\frac{\rho^{\varepsilon}(x) - \rhot^{k+1}(x)}{\varepsilon}\eta_\tau^{k+1}(x)\,\dx & = \int_\Rd\rhot^{k+1}(x)\frac{\eta_\tau^{k+1}(P^\varepsilon(x))-\eta_\tau^{k+1}(x)}{\varepsilon}\,\dx
        \\
        & = \int_\Rd\rhot^{k+1}(x)\nabla\eta_\tau^{k+1}(x)\cdot\zeta(x)\,\dx + O(\varepsilon).
    \end{split}
    \end{align}
    Similarly, for the fourth-order term, we use the fact that $\nabla\etatkk(P^\varepsilon(x)) = \nabla\etatkk(x) + \varepsilon D^2\etatkk(x)\zeta(x) + O(\varepsilon^2)$, and argue as in the one-species case to obtain
    \begin{align}\label{eq:Consistency cross-interaction 2 two species}
      \begin{split}
       & \int_\Rd \frac{\nabla\rho^\varepsilon -\nabla\rhot^{k+1}}{\varepsilon}\cdot\nabla\eta_\tau^{k+1}\,\dx 
        \\ 
         & \quad = -\int_\Rd \Big(\rhot^{k+1}\nabla(\dive\,\zeta)\cdot\nabla\eta_\tau^{k+1} + \nabla\rhotkk\cdot(\nabla\zeta\nabla\etatkk) \\
         & \quad \qquad \qquad - \nabla\rho_\tau^{k+1}\cdot( D^2\eta_\tau^{k+1}\zeta)\Big)\,\dx+ O(\varepsilon).
    \end{split}
    \end{align}
  \textbf{Step 4:} Taking the limit $\varepsilon \rightarrow 0$. Analogously to the one species case we perform the same computation for $\varepsilon \leq 0$ and we take again $\zeta = \nabla \varphi$. If we consider $\varepsilon \rightarrow 0$, and thanks to \eqref{eq:Consistency wasserstein two species}, \eqref{eq:Consistency diffusion two species}, \eqref{eq:Consistency aggregation two species}, \eqref{eq:Consistency cross-interaction 1 two species}, and \eqref{eq:Consistency cross-interaction 2 two species}, we have,
  \begin{align}
      & \frac{1}{\tau} \int_{\Rd} (x - \mathcal{T} (x)) \cdot \nabla \varphi (\mathcal{T} (x)) \rhot^k (x) \,\dx 
     \nonumber \\
      & = \!-\kappa\int\left(\rhotkk\nabla(\Delta \varphi)\cdot\nabla\rhot^{k+1} + ( D^2\varphi \nabla\rhot^{k+1}) \cdot \nabla\rhot^{k+1} + \frac{1}{2}\Delta \varphi |\nabla\rhot^{k+1}|^2\right)\,\dx 
    \nonumber  \\
      & \quad \!-\!\alpha\!\!\int\!\!\! \left(\rhot^{k+1}\nabla(\Delta\varphi)\!\cdot\!\nabla\eta_\tau^{k+1} \!+\! \nabla\rhot^{k+1}\!\cdot\!(D^2\varphi\nabla\eta_\tau^{k+1}) \!-\! \nabla\rho_\tau^{k+1}\!\cdot\!( D^2\eta_\tau^{k+1}\nabla\varphi)\right)\!\dx
     \nonumber \\
      & \quad + \frac{\beta}{2} \int (\rhotkk)^2 \Delta \varphi \,\dx  - \omega\int\rhot^{k+1}\nabla\eta_\tau^{k+1}\cdot\nabla\varphi\,\dx . \label{eq:weak formulation two species}
  \end{align}
As in the one-species case, and using the Holder continuity of $\rhot$, \eqref{eq:equicontinuity on d2 two species}, we have
\begin{equation*}
    \int_\Rd (x-\mathcal{T}(x))\cdot\nabla \varphi (\mathcal{T}(x))\rhot^k(x)\,\dx = \int_{\Rd} \varphi (x) \left[ \rhot^k(x) - \rhot^{k+1}(x) \right] \, \dx + O(\tau )\,.
\end{equation*}
Let $0 \leq s_1 < s_2 \leq T$ be fixed with,
\begin{equation*}
    h_1 = \left[ \frac{s_1}{\tau} \right] +1 \quad \text{and} \quad h_2 = \left[ \frac{s_2}{\tau} \right].
\end{equation*}
By summing on \eqref{eq:weak formulation two species} and using the definition of piecewise interpolation, we obtain,
 \begin{align}\label{eq:Consistency tau limit two species}
        &\int_{\Rd} \varphi (x) \rhot (s_2, x) \, \dx - \int_{\Rd} \varphi (x) \rhot (s_1, x) \, \dx + O(\tau)
     \nonumber \\
      & = \kappa\int_{s_1}^{s_2} \int_\Rd\left(\rhot\nabla(\Delta \varphi)\cdot\nabla\rhot + ( D^2\varphi \nabla\rhot) \cdot \nabla\rhot+\frac{1}{2} \Delta \varphi |\nabla\rhot|^2\right)\,\dx \,\dt
    \nonumber  \\
      & \quad +\alpha\int_{s_1}^{s_2} \int_\Rd \left(\rhot\nabla(\Delta\varphi)\cdot\nabla\eta_\tau + \nabla\rhot\cdot(D^2\varphi\nabla\eta_\tau) - \nabla\rho_\tau\cdot( D^2\eta_\tau\nabla\varphi)\right)\!\dx\dt
     \nonumber \\
      & \quad - \frac{\beta}{2} \int_{s_1}^{s_2} \int_{\Rd} \rhot^2 \Delta \varphi \,\dx \,\dt + \omega\int_{s_1}^{s_2} \int_\Rd\rhot\nabla\eta_\tau\cdot\nabla\varphi\,\dx \,\dt .
  \end{align}
Integrating by parts in the first two terms after the equality, as in~Remarks~\ref{rem:integration_part_weak_form}~and~\ref{rem:integ_parts_two_species}, we obtain
\begin{align*}
    \int_{\Rd}  \varphi (x) \rhot (s_2, x) \, \dx & = \int_{\Rd} \varphi (x) \rhot (s_1, x) \, \dx +O(\tau)\\
    & \quad -\kappa\int_{s_1}^{s_2}  \int_\Rd\left(\rhot \Delta \rhot \Delta \varphi + \Delta \rhot \nabla \rhot \cdot \nabla \varphi \right)\,\dx \, \dt 
    \\
     & \quad -\alpha\int_{s_1}^{s_2}  \int_\Rd\left(\rhot\Delta\etat\Delta\varphi+\Delta\etat\nabla\rhot\cdot\nabla\varphi \right)\,\dx \, \dt
     \\
    & \quad - \frac{\beta}{2} \int_{s_1}^{s_2} \!\int_{\Rd} \rhot^2 \Delta \varphi \dx \dt + \omega\int_{s_1}^{s_2}\! \int_\Rd\rhot\nabla\etat\cdot\nabla\varphi\dx \dt.
\end{align*}
By combining \Cref{prop: strong convergence rho two species}, \Cref{H2 bound flow interchange two species}, and \Cref{prop: interpolation strong convergence gradient two species} we can pass to the limit as $\tau \rightarrow 0^+$, and, in this way, recover a weak solution. As aforementioned, an analogous argument for the species $\eta$ can be repeated to obtain~\eqref{eq: two species eta}.
\end{proof}

\begin{rem}\label{rem:integ_parts_two_species} Assume $\rho,\eta\in H^2(\Rd)$ and $\varphi\in C_0^3(\Rd)$. Using integration by parts, we have
    \begin{align*}
       & \int_\Rd \left(\rho\nabla\Delta\varphi\cdot\nabla\eta + \nabla\rho\cdot( D^2\varphi\nabla\eta) - \nabla\rho\cdot( D^2\eta\nabla\varphi)\right)\,\dx
       \\
       & = - \int_\Rd\rho\Delta\eta\Delta\varphi\,\dx +\int_\Rd\left( \nabla\rho\cdot( D^2\varphi\nabla\eta)-\nabla\rho\cdot( D^2\eta\nabla\varphi)-\Delta\varphi\nabla\rho\cdot\nabla\eta\right)\,\dx
       \\
           & =- \int_\Rd\rho\Delta\eta\Delta\varphi\,\dx + \int_\Rd \left(\nabla\rho\cdot(D^2\varphi\nabla\eta)+\nabla\varphi\cdot( D^2\rho\nabla\eta)\right)\,\dx
       \\
       & =- \int_\Rd\left(\rho\Delta\eta\Delta\varphi+\Delta\eta\nabla\rho\cdot\nabla\varphi\right)\,\dx\,.
    \end{align*}
\end{rem}

\subsection{Extension to generalised self-diffusion systems}
In this subsection we remark that, taking advantage of the one and two species cases, we can generalise the existence theory to the following system with nonlinear self-diffusion terms
\begin{subequations}\label{eq:two species general}    
\begin{align}
        \partial_t\rho &= -\dive\left(\rho\nabla\left(\kappa\Delta\rho + \alpha\Delta\eta + \frac{\beta}{m_1-1}\rho^{m_1 - 1} +\omega\eta\right)\right), \label{eq: two species general rho}
        \\
        \partial_t\eta & =-\dive\left(\eta\nabla\left(\alpha\Delta\rho + \Delta\eta + \omega\rho+ \frac{1}{m_2 - 1} \eta^{m_2 -1} \right)\right), \label{eq: two species general eta}
\end{align}
\end{subequations}
where $1\leq m_1, m_2 < 2 + \frac{2}{d}$. As before, the parameters in the model are such that $\beta,\omega\in\mathbb{R}$ and the matrix 
\begin{equation*}
   A = \begin{pmatrix}
    \kappa & \alpha
    \\
    \alpha & 1
    \end{pmatrix},
\end{equation*}
is positive definite.
In this case, we consider
\begin{align*}
    \Tilde{\bm{\mf}}_{m_1, m_2}[\rho,\eta] & = \int_{\Rd}\left(\frac{\kappa}{2}|\nabla\rho|^2+\frac{1}{2}|\nabla\eta|^2 + \alpha \nabla\rho \cdot \nabla\eta  - \omega\rho\eta\right)\dx \\
    & \quad - \frac{\beta}{m_1} \me_{m_1}[\rho] - \frac{1}{m_2} \me_{m_2} [\eta],
\end{align*}
where $\me_m$ is the entropy defined in \eqref{eq:entropy}.
The system of equations above can be written as a 2-Wasserstein gradient flow with respect to the (extended) free energy functional
\begin{equation*}
    \bm{\mf}_{m_1, m_2}[\rho,\eta] = \begin{cases}
      \Tilde{\bm{\mf}}_{m_1, m_2}[\rho,\eta] & \mbox{if } (\rho,\eta)\in\mathcal{P}^a(\mathbb{R}^d)^2,\, (\nabla\rho,\nabla\eta) \in L^2(\mathbb{R}^d)^2,
        \\
        +\infty & \mbox{otherwise.} 
    \end{cases}
\end{equation*}
We can obtain the following lower bound for the free energy:
\begin{align*}
    {\bm{\mf}}_{m_1, m_2}[\rho,\eta] &=\int_{\Rd}\left(\frac{\kappa}{2}|\nabla\rho|^2+\frac{1}{2}|\nabla\eta|^2 + \alpha\nabla\rho\cdot\nabla\eta -\omega \rho\eta \right)\,\dx
    \\
    & \quad - \frac{\beta}{m_1} \me_{m_1}[\rho] - \frac{1}{m_2} \me_{m_2}[\eta] 
     \\
     &  \geq \int_{\Rd}\left(\frac{\kappa}{2}|\nabla\rho|^2+\frac{1}{2}|\nabla\eta|^2 - |\alpha||\nabla\rho||\nabla\eta| - \frac{|\omega|}{2}\rho^2 - \frac{|\omega|}{2} \eta^2 \right)\,\dx
     \\
     &\quad  - \frac{\beta}{m_1}\me_{m_1}[\rho] - \frac{1}{m_2} \me_{m_2}[\eta] 
     \\
     &  \geq \int_{\Rd} \left( \frac{\kappa - |\alpha|\varepsilon}{4}|\nabla\rho|^2 -\frac{|\omega|}{2}\rho^2 \right) \, \dx \\
     & \quad + \int_{\Rd} \left( \frac{1 - |\alpha|\varepsilon^{-1}}{4}|\nabla\eta|^2 -\frac{|\omega|}{2}\eta^2 \right) \, \dx 
     \\
     & \quad + \int_{\Rd} \frac{\kappa - |\alpha|\varepsilon}{4}|\nabla\rho|^2 \, \dx - \frac{\beta}{m_1}\me_{m_1}[\rho] 
     \\
     &\quad + \int_{\Rd}  \frac{1 - |\alpha|\varepsilon^{-1}}{4}|\nabla\eta|^2 \, \dx - \frac{1}{m_2} \me_{m_2}[\eta] .
\end{align*}
Therefore, since we can take $\varepsilon$ such that $\kappa - |\alpha| \varepsilon, \, 1 - |\alpha| \varepsilon^{-1} > 0$, it follows that
\begin{equation}\label{eq: energy inequality general system}
    {\bm{\mf}}_{m_1, m_2}[\rho,\eta] \geq C \left( \mf_2 [\rho] + \mf_2[\eta] + \mf_{m_1} [\rho] + \mf_{m_2} [\eta] \right) .
\end{equation}
In particular, for $1\leq m_1, m_2 < 2 + \frac{2}{d}$, the free energy is bounded from below. Furthermore, \eqref{eq: energy inequality general system} gives the basic estimates that we used for the existence of the one and two species cases. Since the cross-interacting terms are kept as in \eqref{eq:two species} and the new terms with exponents $m_1$ and $m_2$ have already been treated on the one species case, our previous results can be easily generalised to obtain existence for the problem \eqref{eq:two species general}. 

In addition to that, using a scaling argument, we can show that the free energy is unbounded from below if $m_1 > 2 + \frac{2}{d}$, or equally $m_2>2 + \frac{2}{d}$. Without loss of generality we state the result for $m_1$. A thorough analysis of more general systems, as well as the other cases for the exponents, will be object of further investigation, as it is beyond the purpose of the current manuscript.

\begin{prop}
    Assume  $m_1 > m_c$ and denote
    $$    \mathcal{Y} := \left\lbrace (\rho, \eta) \in \mathcal{P}^a(\Rd)^2\cap L^{m_1}(\Rd) \times L^{m_2}(\Rd) : \nabla \rho, \nabla \eta \in L^2(\Rd)\right\rbrace.$$ Then
    \begin{equation*}
        \inf_{(\rho, \eta) \in \mathcal{Y}} \bm{\mf}_{m_1, m_2} [\rho , \eta] = - \infty,
    \end{equation*}
\end{prop}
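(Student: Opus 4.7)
The plan is to mimic the scaling argument from \Cref{prop: free energy supercritical} (single-species supercritical regime), now carrying along the cross-interaction and $\eta$-dependent contributions specific to the coupled system.

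First I would fix any nontrivial pair $(\rho, \eta) \in \mathcal{Y}$ and consider the mass-preserving dilation $\rho_\lambda(x) := \lambda^d \rho(\lambda x)$, leaving $\eta$ untouched; a direct check shows $(\rho_\lambda, \eta) \in \mathcal{Y}$ for every $\lambda > 0$. Expanding $\bm{\mf}_{m_1, m_2}[\rho_\lambda, \eta]$ term by term, I would collect the scalings in $\lambda$: the self-diffusion of $\rho_\lambda$ scales as $\lambda^{d+2}$; its self-aggregation $-\frac{\beta}{m_1}\me_{m_1}[\rho_\lambda]$ scales as $\lambda^{d(m_1-1)}$; Cauchy--Schwarz gives
\[
\bigl|\alpha \textstyle\int \nabla\rho_\lambda \cdot \nabla\eta\, \dx\bigr| = O(\lambda^{(d+2)/2}) \quad\text{and}\quad \bigl|\omega \textstyle\int \rho_\lambda \eta\, \dx\bigr| = O(\lambda^{d/2}),
\]
while all terms depending only on $\eta$ are $\lambda$-independent. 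Assuming $\beta > 0$ (the genuine destabilising regime for $\rho$) and since $m_1 > m_c$ forces $d(m_1-1) > d+2$, the dominant term as $\lambda \to +\infty$ is precisely $-\frac{\beta}{m_1(m_1-1)} \lambda^{d(m_1-1)} \|\rho\|_{L^{m_1}(\Rd)}^{m_1}$, whence $\bm{\mf}_{m_1, m_2}[\rho_\lambda, \eta] \to -\infty$ and consequently $\inf_{\mathcal{Y}} \bm{\mf}_{m_1, m_2} = -\infty$.

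The only subtlety relative to \Cref{prop: free energy supercritical} is the control of the cross-interaction terms, which were absent in the one-species case. These turn out not to be a genuine obstacle, as the Cauchy--Schwarz scalings $O(\lambda^{(d+2)/2})$ and $O(\lambda^{d/2})$ are strictly slower than the super-critical rate $\lambda^{d(m_1-1)}$. To bypass these estimates altogether one could instead choose $\rho, \eta \in C_c^\infty(\Rd)$ with spatially disjoint supports --- say $\supp \rho \subset B_1(0)$ and $\supp \eta \subset B_1(x_0)$ for $|x_0|$ large --- so that, since $\supp \rho_\lambda \subset B_{1/\lambda}(0)$, for $\lambda$ large enough every cross-interaction integral vanishes identically, reducing the computation verbatim to the one-species argument of \Cref{prop: free energy supercritical}. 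The case $m_2 > m_c$ is handled symmetrically by scaling $\eta$ instead.
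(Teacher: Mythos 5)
Your proposal is correct and follows essentially the same route as the paper: a mass-preserving dilation of $\rho$ alone, with the supercritical self-aggregation term $\lambda^{d(m_1-1)}$ dominating the $\lambda^{d+2}$ self-diffusion since $m_1>m_c$. The only difference is in how the cross-interaction terms are handled: you bound them by Cauchy--Schwarz as $O(\lambda^{(d+2)/2})$ and $O(\lambda^{d/2})$ (which requires $\rho,\eta\in L^2(\Rd)$, available here by Gagliardo--Nirenberg since $\rho,\eta\in L^1$ with $L^2$ gradients), so the argument works for an arbitrary pair in $\mathcal{Y}$, whereas the paper instead chooses $\rho$ and $\eta$ with supports that become disjoint after rescaling so that these terms vanish identically --- which is exactly your fallback construction. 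One point in your favour: you explicitly flag that the argument needs $\beta>0$ so that $-\frac{\beta}{m_1(m_1-1)}\|\rho\|_{L^{m_1}}^{m_1}$ is genuinely destabilising; the paper's statement omits this hypothesis even though its own proof implicitly relies on it (for $\beta\le 0$ the scaling in $\rho$ sends the energy to $+\infty$, and with $m_2\le m_c$ no rescue via $\eta$ is available).
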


\begin{proof}
    Given $(\rho, \eta) \in \mathcal{Y}$ we define $\rho_{\lambda} (x) := \lambda^d \rho (\lambda x)$, for any $x \in \Rd$ and any $\lambda \in (0, + \infty)$. Note that $(\rho_{\lambda} , \eta ) \in \mathcal{Y}$. Then, we have, 
    \begin{align*}
        \bm{\mf}_{m_1, m_2} [\rho_{\lambda} , \eta] & = \frac{\kappa}{2} \lambda^{d+2} \| \nabla \rho \|_{L^2(\Rd)}^2 - \lambda^{d (m_1-1)} \frac{\beta}{m_1(m_1 -1)} \| \rho \|_{L^{m_1}(\Rd)}^{m_1} \\
        & \quad + \frac{1}{2} \| \nabla \eta \|_{L^2(\Rd)}^2 - \frac{1}{m_2} \me_{m_2}[\eta ] \\
        & \quad + \int_{\Rd} \alpha \lambda^d \nabla \rho (\lambda x) \cdot \nabla \eta (x) \, \dx - \int_{\Rd} \omega \lambda^d \rho (\lambda x) \eta (x) \, \dx \\
        & = \lambda^{d+2} \left( \frac{\kappa}{2}  \| \nabla \rho \|_{L^2(\Rd)}^2 -  \lambda^{d (m_1-m_c)} \frac{\beta}{m_1(m_1 -1)} \| \rho \|_{L^{m_1}(\Rd)}^{m_1} \right) \\
        & \quad + \frac{1}{2} \| \nabla \eta \|_{L^2(\Rd)}^2 - \frac{1}{m_2 } \me_{m_2}[\eta]  \\
        & \quad + \int_{\Rd} \alpha \lambda^d \nabla \rho (\lambda x) \cdot \nabla \eta (x) \, \dx - \int_{\Rd} \omega \lambda^d \rho (\lambda x) \eta (x) \, \dx .
    \end{align*}
    Therefore, if we take $\rho$ and $\eta$ such that $\lambda \times \supp (\rho ) \cap \supp (\eta) = \emptyset$ for big enough $\lambda$ it follows that $\bm{\mf}_{m_1, m_2} [\rho_{\lambda} , \eta] \rightarrow - \infty$ when $\lambda \rightarrow + \infty$; for instance we could consider the support of $\rho$ to be an annulus and that of $\eta$ to be a ball.
\end{proof}

\subsection*{Acknowledgements}
The authors were supported by the Advanced Grant Nonlocal-CPD (Nonlocal PDEs for Complex Particle Dynamics: Phase Transitions, Patterns and Synchronization) of the European Research Council Executive Agency (ERC) under the European Union’s Horizon 2020 research and innovation programme (grant agreement No. 883363). CF acknowledges support of a fellowship from "la Caixa" Foundation (ID 100010434) with code LCF/BQ/EU21/11890128.

\bibliography{references}
\bibliographystyle{abbrv}

\end{document}